\newtheorem{theorem}{Theorem}[section]
\newtheorem{theoremletter}{Theorem}
\newtheorem{lemma}[theorem]{Lemma}
\newtheorem{corollary}[theorem]{Corollary}
\newtheorem{proposition}[theorem]{Proposition}
\newtheorem{definition}[theorem]{Definition}
\newtheorem{remark}[theorem]{Remark}
\newenvironment{eqn}{\begin{equation}}{\end{equation}}
\newtheorem{examples}[theorem]{Examples}
\newcommand{\K}{\operatorname{K}}
\def\e{\operatorname{e}}
\def\Hilb{\operatorname{Hilb}}
\def\chernchar{\operatorname{ch}}
\def\Td{\operatorname{Td}}
\def\Supp{\operatorname{Supp}}
\def\rk{\operatorname{rk}}
\def\Gr{\operatorname{Gr}}
\def\Ext{\operatorname{Ext}}
\def\Hom{\operatorname{Hom}}
\def\sign{\operatorname{sign}}
\def\Syst{\operatorname{Syst}}
\def\Pic{\operatorname{Pic}}
\newcommand{\into}{\rightarrow}
\newcommand{\cech}{\vee}
\newcommand{\chone}[1]{\mathrm{c}_1(#1)}
\newcommand{\ch}[2]{\mathrm{ch}_{#1}(#2)}
\newcommand{\chtwo}[1]{\ch{2}{#1}}
\newcommand{\cherntwo}[1]{\mathrm{c}_2(#1)}
\renewcommand{\phi}{\varphi}
\def\P{\mathbb{P}}
\def\Z{\mathbb{Z}}
\def\Q{\mathbb{Q}}
\def\C{\mathbb{C}}
\def\P{\mathbb{P}}
\def\O{\mathcal{O}}
\def\E{\mathcal{E}}
\def\F{\mathcal{F}}
\def\U{\mathcal{U}}
\newcommand{\SHom}{\mathcal{H}om}
\renewcommand{\bar}[1]{\overline{#1}}
\newcommand{\hodge}[1]{\mathrm{e}\left(#1\right)}
\newcommand{\sqnom}[2]{\genfrac{[}{]}{0pt}{}{#1}{#2}}
\newcommand{\curbnom}[2]{\displaystyle\genfrac{\{}{\}}{0pt}{}{#1}{#2}}
\newcommand{\Smat}[1]{\mathbf{Syst^{#1}(g)}}
\newcommand{\Mat}{\mathbf{M(g)}}
\newcommand{\Gmat}{\mathbf{M^0(g)}}
\newcommand{\Amat}[1]{\mathbf{A(#1)}}
\newcommand{\Pmat}[1]{\mathbf{P(#1)}}
\title{Higher rank stable pairs on K3 surfaces}
\date{\today}
\author{B. Bakker}
\address{
Courant Institute of Mathematical Sciences,
251 Mercer St., New York, NY 10012
}
\email{bakker@cims.nyu.edu}
\author{A. Jorza}
\address{
California Institute of Technology, Department of Mathematics, Pasadena, CA}
\email{ajorza@caltech.edu}
\begin{document}
\begin{abstract}
We define and compute higher rank analogs of Pandharipande-Thomas
stable pair invariants in primitive classes for K3 surfaces.  Higher rank stable pair
invariants for Calabi-Yau threefolds have been defined by Sheshmani
\cite{shesh1,shesh2} using moduli of pairs of the form $\O^n\into \F$ for $\F$ purely
one-dimensional and computed via wall-crossing techniques.  These invariants may be thought of as virtually
counting embedded curves decorated with a $(n-1)$-dimensional linear
system.  We treat invariants counting pairs $\O^n\into \E$ on a
$\K3$ surface for $\E$ an arbitrary stable sheaf of a fixed numerical
type (``coherent systems'' in the language of \cite{KY}) whose first Chern class is primitive, and fully compute them geometrically.  The ordinary stable pair theory of $\K3$ surfaces is treated
by \cite{MPT}; there they prove the KKV conjecture in primitive classes by showing the resulting partition functions are
governed by quasimodular forms.  We prove a ``higher'' KKV conjecture by showing that our higher
rank partition functions are modular forms.
\end{abstract}
\maketitle

\section{Introduction}
\subsection{Main results}
Stable pair invariants were defined for threefolds $X$ in \cite{PT1} by integration over a moduli space $P_k(X,\beta)$ parametrizing nonzero maps $\O_X\into\F$ with zero-dimensional cokernel for $\F$ a purely one-dimensional sheaf (see $\S \ref{threefold pairs}$) with $k=\chi(\F)$ and $[\Supp\F]=\beta$.  Recently these invariants have been generalized to counts of maps $\O_X^n\into \F$ for $n\geq 1$ and $\F$ higher rank (see $\S \ref{higher pairs}$).  The aim of this paper is to define and fully compute higher stable pair invariants for $X$ a K3 surface.  

Let $D$ be a divisor class on a K3 surface $X$ such that any representative of $D$ is reduced and irreducible (a divisor of minimal degree will be sufficient, \emph{cf.} Definition \ref{minimal degree}), $n,r$ nonnegative integers, and $k\in\Z$.  The Kawai-Yoshioka moduli space $\Syst_X^n(r,D,k)$ \cite{KY} of coherent systems parametrizes nonzero maps $\O_X^n\into \E$ with $\E$ stable of Mukai vector $v(\E)=(r,D,k)$.  It was originally noted by \cite{PT3} that with the above restriction on $D$, $\Syst^1(0,D,k)$ is isomorphic to $P_k(X,D)$ (which still exists for $X$ a surface, though the invariants are only defined in the threefold case).  Indeed, if $\P=|D|$ is the complete linear system of $D$ and
$X\times\P\supset\mathcal{C}_D\into \P$ is the universal divisor, then $\Syst^1(0,D,k)$ is simply the relative Hilbert scheme
$\mathcal{C}_D^{[k+g-1]}=\Hilb^{k+g-1}(\mathcal{C}_D/\P)$.  We therefore view $\Syst_X^n(r,D,k)$ for $n>1$ or $ r>0$ as a moduli space of higher stable pairs.   $\Syst_X^n(r,D,k)$ is smooth \cite[Lemma 5.117]{KY}, so we define the signed Euler characteristic of $\Syst_X^n(r,D,k)$ to be the higher stable pair invariant, in analogy with the threefold case.  The Euler characteristic is deformation invariant for deformations of $X$ for which $D$ remains algebraic and such that every representative is reduced and irreducible, so for each genus $g$ we once and for all fix a K3 surface $X_g$ with such a divisor $D_g$ of genus $g$ (see $\S \ref{preparations}$) and compute it's higher stable pair invariants.  

Our main result is a computation of the Hodge polynomials
$\hodge{\cdot}=\sum h^{p,q}(\cdot)(-t)^p(-\bar{t})^q$ of the moduli spaces $\Syst_{X_g}^n(r,D_g,k)$:  defining generating functions 
\begin{eqn}\label{gen}F^r_n(q,y)=\sum_{g\geq 0}\sum_{k\in\mathbb Z}\hodge{\Syst_{X_g}^n(r,D_g,k+r)}(t\bar t)^{-g}y^kq^{g-1}\end{eqn}
we prove in Theorem \ref{thm}:
\begin{theoremletter}\label{maintheorem} Let $S(q)=\displaystyle\sum_{n\geq 0}\e(X^{[n]})(t\bar t)^{-n}q^{n-1}$ be the generating function of the (symmetrized) Hodge polynomials of the Hilbert schemes $X^{[n]}$ of $n$ points on a K3 surface $X$.  For $X_g,D_g$ chosen as above, 
\[\frac{F^r_n(q,y)}{S(q)}=\frac{(t\bar t)^{r(n-r)}}{[n]}\sum_{\substack{p\geq n-r\\\ell\geq r}}(t\bar t)^{-n\ell-(p-\ell)r}[p+\ell]\sqnom{n+\ell-r-1}{n-1}\sqnom{p+r-1}{n-1}y^{p-\ell}q^{p\ell}\]
\end{theoremletter}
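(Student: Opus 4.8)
The plan is to compute each Hodge polynomial $\hodge{\Syst_{X_g}^n(r,D_g,k)}$ by a motivic stratification and then to assemble the three-variable generating series. Because the moduli spaces are smooth and projective \cite[Lemma 5.117]{KY}, their Hodge polynomials are additive over stratifications by locally closed subvarieties and multiplicative over Zariski-locally trivial fibre bundles whose fibre is a Grassmannian or a projective space (by Leray--Hirsch, since there the fibre cohomology is algebraic and extends across the base). So it suffices to (i) produce a stratification of $\Syst_{X_g}^n(r,D_g,k)$ into such bundles, (ii) identify their bases, and (iii) perform the $g$- and $k$-sums.

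For step (i), the natural map $(\phi\colon\O^n\to\E)\mapsto\E$ presents $\Syst_{X_g}^n(r,D_g,k)$ over the Mukai moduli space of stable sheaves, and I would stratify by the cohomological type of the target: a pair of nonnegative integers $(p,\ell)$ recording the ranks of $H^0$ and $H^1$ of a Fourier--Mukai transform of $\E$ (so that $\chi=p-\ell$ and $v^2$ fix the genus). Over each stratum the space of admissible maps $\O^n\to\E$ is, after the transform, a fixed Grassmannian/flag bundle; this is the geometric origin of the two Gaussian binomials $\sqnom{n+\ell-r-1}{n-1}$ and $\sqnom{p+r-1}{n-1}$ and of the symmetric quantum integer $[p+\ell]$, the last reflecting the $H^0\leftrightarrow H^1$ symmetry induced by the transform. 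This is exactly the Kawai--Yoshioka structure theory \cite{KY}, which identifies each stratum as such a bundle over a Brill--Noether locus inside a space deformation-equivalent to a Hilbert scheme of points.

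For steps (ii)--(iii), the base of the $(p,\ell)$-stratum is a Brill--Noether locus of codimension $p\ell$; by Yoshioka's deformation equivalence $M(v)\sim X^{[\bullet]}$ and G\"ottsche's formula its Hodge polynomial is that of $\hodge{X^{[m]}}$ with $m=g-p\ell$. Reindexing the genus by this codimension turns the base sum $\sum_g\hodge{X^{[g-p\ell]}}(t\bar t)^{-g}q^{g-1}$ into $q^{p\ell}(t\bar t)^{-p\ell}S(q)$, which is precisely how $S(q)$ factors out of $F^r_n$ and how the factor $q^{p\ell}$ appears. The remaining $(t\bar t)$-powers and the fibre Hodge polynomials then collect into the stated summand: the prefactor $(t\bar t)^{r(n-r)}$ and the shifts in the exponents arise from the change of dimension under the Fourier--Mukai transform used to pass from $r>0$ to the rank-zero situation, while $y^{p-\ell}=y^k$ records the Euler characteristic. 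As a consistency check the case $n=1,\ r=0$ should reduce to the relative Hilbert scheme $\mathcal{C}_{D_g}^{[k+g-1]}$ and recover the computation underlying \cite{MPT}.

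The hard part will be carrying out steps (ii)--(iii) at the level of Hodge polynomials rather than of Euler characteristics as in \cite{KY}: I must check that each cohomological stratum really is a \emph{Zariski-locally trivial} Grassmannian/projective bundle over a base whose full Hodge polynomial --- not merely its Euler number --- is controlled by $X^{[m]}$. The most delicate point is the interaction of the two Grassmannian factors, namely the choice of the $n$ sections and its Fourier--Mukai dual, with the geometry of the jumping (Brill--Noether) loci; here the $H^0\leftrightarrow H^1$ symmetry of the transform must be used to show that the relevant incidence varieties are smooth bundles, thereby pinning down both the symmetric integer $[p+\ell]$ and the exact power of $(t\bar t)$ in each term.
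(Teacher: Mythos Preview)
Your proposal has a genuine gap at step (ii). You assert that the base of the $(p,\ell)$-stratum is a Brill--Noether locus whose Hodge polynomial equals $\hodge{X^{[g-p\ell]}}$, invoking Yoshioka's deformation equivalence. But that deformation equivalence only applies to the \emph{full} moduli space $M(v)$; the individual Brill--Noether strata $M(v)_i$ are locally closed subvarieties which are not in general deformation equivalent to Hilbert schemes and do not have those Hodge polynomials. Consequently the terms of the sum in the theorem do \emph{not} arise as ``fibre $\times$ base'' contributions from a single stratification of $\Syst^n(r,D_g,k+r)$.

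What the paper actually does is more indirect. The key extra geometric input (Lemma \ref{result}) is the cokernel map $q$ sending a pair $U\otimes\O\hookrightarrow\E$ to $\E/(U\otimes\O)$, which identifies each stratum $\Syst^n(r,D,a)_i$ with a Grassmannian bundle over a stratum of a \emph{different} moduli space $M(r-n,D,a-n)_{i-n}$. Iterating this relates all the Brill--Noether strata across an entire family of moduli spaces (with varying Mukai vector), encoded as an upper-triangular matrix equation $\Mat=\Amat{0}\Gmat$ and $\Smat{n}=\Amat{n}\Gmat$. One then \emph{inverts} $\Amat{0}$ to get $\Smat{n}=\Amat{n}\Amat{0}^{-1}\Mat$, so that the Hodge polynomials of $\Syst^n$ are expressed as combinations of the Hodge polynomials of the full spaces $M(v)$---and only these are known via $M(v)\sim X^{[\bullet]}$. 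The factors $[p+\ell]/[n]$ and the two Gaussian binomials in the theorem are the entries of the matrix $\Amat{n}\Amat{0}^{-1}$; the inverse of $\Amat{0}$ has alternating signs, so the fact that the final answer has this clean product form is a nontrivial $u$-binomial identity (Lemma \ref{useful}), not the Hodge polynomial of a single geometric fibre. Your Fourier--Mukai transform is not needed and does not appear; the duality you are reaching for is instead the isomorphism $\Syst^n(r,D,a)\cong\Syst^n(n-r,D,a-r)$ of Proposition \ref{dual}, used to reduce the $k<0$ part of the sum to the $k\geq 0$ part.
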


The square binomial coefficient $\sqnom{n}{k}$ is a polynomial
in $u=t \overline{t}$ (see \S \ref{uint}) which computes the
Hodge polynomial of $\Gr(k,n)$ while $[n]$ is the Hodge
polynomial of $\mathbb{P}^{n-1}$.
The technique involved in the proof is a generalization of the calculation of \cite{KY}, and we reproduce their result for $F_0^1(q,y)$. We remark that this rank 1 generating function $F_0^1(q,y)$ is related \cite{MPT} to the reduced Gromov-Witten potentials of the $\K3$ surface via a change of variables (for details see \S \ref{k3 pairs}); although there is currently no notion of ``higher Gromov-Witten theory," we expect there to be wall-crossing relationships between our invariants and other higher rank analogues of ``sheaf-theoretic" curve-counting invariants on K3 surfaces.  


Using Theorem \ref{maintheorem} we further show that the higher partition functions $F^r_n(q,y)$ are governed by modular forms (Theorem \ref{thm:modularity}):
\begin{theoremletter}\label{maintheorem modularity}  Substituting $y=e^{iv}$, the coefficient of $v^s$ in the Taylor series expansion of $(F_n^r(q,y)/S(q))|_{t=\overline{t}=1}$ is an element of a $\mathbb{Q}(i)$-algebra generated by Eisenstein series of level $\Gamma(4)$ (cf. \S \ref{euler}).
\end{theoremletter}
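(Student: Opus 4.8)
The plan is to begin from the closed formula of Theorem~\ref{maintheorem} and specialize $u:=t\bar t=1$. There the Gaussian binomials $\sqnom{\cdot}{\cdot}$ degenerate to ordinary binomial coefficients (the Euler characteristics of Grassmannians), $[n]$ becomes $n$, and every power of $t\bar t$ collapses to $1$, so that
\[
\left.\frac{F_n^r(q,y)}{S(q)}\right|_{t=\bar t=1}
=\frac1n\sum_{\substack{p\ge n-r\\ \ell\ge r}}(p+\ell)\binom{n+\ell-r-1}{n-1}\binom{p+r-1}{n-1}\,y^{\,p-\ell}q^{\,p\ell}.
\]
Writing $y=e^{iv}$ and using $y^{p-\ell}=\sum_{s\ge0}\frac{(i(p-\ell))^s}{s!}v^s$, the coefficient of $v^s$ equals $\frac{i^s}{s!\,n}\Phi_s$ with $\Phi_s=\sum(p+\ell)(p-\ell)^s\binom{n+\ell-r-1}{n-1}\binom{p+r-1}{n-1}q^{p\ell}$. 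Since $i^s/(s!\,n)\in\Q(i)$, it suffices to show that each $\Phi_s$ lies in the $\Gamma(4)$ Eisenstein algebra of \S\ref{euler}, the factor $i^s$ accounting for the coefficient field $\Q(i)$.

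The first step is to remove the range restrictions. Reading the binomials as polynomials in $p,\ell$ via falling factorials, one checks that $\binom{p+r-1}{n-1}=0$ whenever $1\le p\le n-r-1$ (its top argument then lies in $[0,n-2]$) and, symmetrically, $\binom{n+\ell-r-1}{n-1}=0$ whenever $1\le\ell\le r-1$. Hence, for $1\le r\le n-1$, extending the sum to all $p,\ell\ge1$ only adds vanishing terms, and
\[
\Phi_s=\sum_{p,\ell\ge1}(p+\ell)(p-\ell)^s\binom{n+\ell-r-1}{n-1}\binom{p+r-1}{n-1}\,q^{p\ell}.
\]
The extreme ranks $r=0,n$ correspond to rank-zero sheaves and are governed by the (quasimodular) computation of \cite{MPT}.

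Next I would organize the sum by $m=p\ell$: setting $\ell=d$, $p=m/d$ and expanding the polynomial weight into monomials $d^i(m/d)^j$, each monomial contributes $\sum_{d\mid m}d^i(m/d)^j=m^j\sigma_{i-j}(m)$, so $\Phi_s$ becomes a finite $\Q$-linear combination of the series $(q\tfrac{d}{dq})^{j}\mathcal E_{i-j}$, where $\mathcal E_k(q)=\sum_{m\ge1}\sigma_k(m)q^m$. Up to its constant term $\mathcal E_k$ is the weight-$(k+1)$ Eisenstein series: for $k$ odd this is a level-one form, while for $k$ even (odd weight $k+1$) it is an Eisenstein series of level $\Gamma(4)$ attached to the quadratic character $\chi_{-4}$ modulo $4$ --- this is exactly where the level $4$ originates. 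Because the binomial factors are inhomogeneous both parities occur, but every term is an Eisenstein series of level $\Gamma(4)$.

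The step I expect to be the real obstacle is the final one: showing that these generators together with their repeated $q\tfrac{d}{dq}$-derivatives still lie in the prescribed algebra. This amounts to verifying that the $\Q(i)$-algebra of \S\ref{euler} is closed under $q\tfrac{d}{dq}$ --- i.e.\ that Ramanujan--Serre type identities express $q\tfrac{d}{dq}$ of each (trivially- or $\chi_{-4}$-twisted, even- or odd-weight) Eisenstein series as a polynomial in such series, so that the chosen generating set is rich enough to absorb the derivatives --- together with the careful bookkeeping of the omitted constant terms and of the low-weight ($E_2$-type) quasimodular contributions. By contrast the range extension and the divisor-sum regrouping are essentially formal; the substantive content is this derivative closure, for which the explicit description of the algebra in \S\ref{euler} is essential.
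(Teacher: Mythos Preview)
Your approach differs genuinely from the paper's. The paper keeps $u=t\bar t$ as a parameter: it rewrites $F_n^r/S$ as a $C_n^r(i,j)$-weighted sum of the functions $\Psi(u^i,u^jy;q)$ (Theorem~\ref{modus}), expresses $\Psi$ via the theta-quotient $\Phi$ of \eqref{Phi def}, and then extracts the $u\to1$ limit by L'H\^opital, the work being a lemma that the $u$-derivatives of $\log\Phi(u^k,u^\ell e^{iv};q)$ at $u=1$ are $\Q(i)$-combinations of the divisor-sum series $\Sigma_w$ for $w\ge1$. Your route --- specialize $u=1$ at once, expand $y^{p-\ell}$ in $v$, regroup by $m=p\ell$ --- is more elementary and avoids the theta machinery entirely.

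There are, however, two gaps. The minor one: the cases $r=0,n$ are \emph{not} covered by \cite{MPT}, which treats only $n=1$; for $n>1$ the boundary $\ell=0$ (resp.\ $p=0$) contributes a nonzero $q^0$ part, a rational function of $y$ with a pole of order $n+1$ at $y=1$, and this must be handled separately.

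The substantive gap is in the divisor-sum step. Your identification of $\mathcal E_k=\Sigma_k$ for $k$ even with a level-$\Gamma(4)$ Eisenstein series attached to $\chi_{-4}$ is wrong when $k=0$: the series $\sum_m d(m)q^m$ is not modular at all, so if diagonal monomials $p^a\ell^a$ survived in your weight $W_s(p,\ell)$ the argument would break here. And even disregarding $k=0$, the closure of $R$ under $q\tfrac{d}{dq}$ that you correctly flag as the obstacle is genuinely unclear for the odd-weight generators $E_{2g+1}(q^2)$. What saves your argument is a symmetry you did not use: from $\binom{-x}{n-1}=(-1)^{n-1}\binom{x+n-2}{n-1}$ one finds $Q(-p)=(-1)^{n-1}P(p)$ and $P(-\ell)=(-1)^{n-1}Q(\ell)$ (with $P(p)=\binom{p+r-1}{n-1}$, $Q(\ell)=\binom{n+\ell-r-1}{n-1}$), whence $W_s(-\ell,-p)=-W_s(p,\ell)$. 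Writing $W_s=\sum c_{a,b}\,p^a\ell^b$ this reads $c_{a,b}=(-1)^{a+b+1}c_{b,a}$; in particular $c_{a,a}=0$ (so no $\Sigma_0$), and since the divisor sums satisfy $\sum_{de=m}d^ae^b=\sum_{de=m}d^be^a$, the pairs with $a+b$ even cancel, leaving only $\Sigma_k$ with $k$ \emph{odd} --- i.e.\ level-one quasimodular forms. Closure of $\Q[E_2,E_4,E_6]$ under $q\tfrac{d}{dq}$ is then Ramanujan's classical identity, and your obstacle evaporates. Completed this way, your method for $1\le r\le n-1$ not only works but in fact lands in a strictly smaller ring than the paper's $R$.
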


The proof of Theorem \ref{maintheorem modularity} relies on Hickerson's work on Ramanujan's mock
theta conjectures (\emph{cf}. Theorem \ref{modus}); the mock theta conjectures state that certain
mock theta functions (which Ramanujan defined as certain generating
functions, but can be thought of as the holomorphic parts of
certain Maass forms) can be written as linear combinations of
infinite products.

Theorem \ref{maintheorem modularity}, generalizing the $r=0, n=1$ result of \cite{MPT} (see \S \ref{k3 pairs} (c)), is
surprising in that it is not predicted by physics.  The modularity of
the ordinary stable pair and Gromov-Witten generating functions of a
K3 surface are physically attributed by Katz, Klemm, and Vafa
\cite{KKV} to the duality between M-theory compactified on a K3
surface and heterotic string theory compactified on $T^3$ (here
$T=S^1$) \cite{heterotic}.  We will hereafter refer to mathematical
statements of the modularity of such generating functions as the KKV
conjecture; it has been proven in several cases.  The relative Hilbert
scheme $\mathcal{C}^{[d]}_D$ is interpreted by \cite{KY} as a space of
$D0$-branes bound to a $D2$-brane wrapping a K3 surface, and their
calculation \eqref{ky compute} proves the KKV conjecture for such
invariants, for $D$ of minimal degree (\emph{cf.} Definition
\ref{minimal degree}).  \cite{MPT} proves the KKV conjecture for
Gromov-Witten potentials in primitive classes, which by an MNOP-style
duality (see (b) of \S1.4 below) agrees with our Theorem \ref{maintheorem modularity} for $r=0$ and $n=1$, and \cite{localk3} conjecturally treats the KKV conjecture for ordinary stable pair invariants in all divisor classes.  In its full generality, Theorem \ref{maintheorem modularity} should be interpreted as a higher rank KKV conjecture in classes of minimal degree, though it is an interesting question whether our generating functions have a physical interpretation.

To further motivate our results in the remainder of the introduction we review stable pair theories for threefolds and $\K3$ surfaces.

\subsection{Stable pair invariants on threefolds}\label{threefold pairs}
Let $X$ be a smooth threefold.  A stable
pair is a one-dimensional sheaf $\F$ together with a nonzero section
$\O_X\into \F$ whose kernel is zero-dimensional.  The moduli space of stable pairs with $[\Supp(\F)]=\beta$ and $\chi(\F)=k$ is a projective scheme $P_k(X,\beta)$ (see \cite{PT1} for
details).  Generically, the support $C=\Supp(\F)$ of $\F$ is a smoothly embedded
curve, in which case $\F$ is a line bundle $L_C$ on $C$ and the section
$\O_X\into \F$ is a composition $\O_X\into\O_C\into
L_C$, where the latter map is a section of $L_C$, i.e., a
divisor on $C$ in the divisor class given by $L_C$.  Thus,
$P_k(X,\beta)$ is a compactification of the space of smoothly
embedded, $(k+g-1)$-pointed curves.

Recall that the Behrend function
$\nu_M:M\into \Z$ of a
scheme $M$ is a canonical constructible function associated to $M$
which measures the singularities of $M$ (see \cite{chibehrend} for basic
properties); for example, if $M$ is smooth then $\nu_M$ is constant,
equal to $(-1)^{\dim M}$.  By
integrating $\nu_M$ with respect to the (topological) Euler
characteristic measure $d\chi$ on $M$ we obtain an invariant.  For $M=P_k(X,\beta)$, Behrend has shown \cite{chibehrend} that
this yields the Pandharipande-Thomas stable pair invariants of $X$
\[PT_{\beta,k}=\int_{P_k(X,\beta)}\nu_{P_k(X,\beta)}d\chi:=\sum_{s\in\Z}s\chi(\nu_{P_k(X,\beta)}^{-1}(s))\]
which can be thought of as a virtual count of pointed curves.  This number was originally defined by integrating the
virtual class of a symmetric obstruction theory on $P_k(X,\beta)$
in \cite{PT1}, and using those techniques can be shown to be deformation invariant of
$(X,\beta)$.  Note that if $P_k(X,\beta)$ is smooth, then
$PT_{\beta,n}$ is the signed Euler characteristic of $P_k(X,\beta)$.

The Donaldson-Thomas invariant $DT_{\beta,k}$ of $X$ (for $X$ Calabi-Yau) is defined similarly by integrating the virtual class of a symmetric obstruction theory on $I_k(\beta,X)$, the Hilbert scheme of subschemes $Z\subset X$ such
that $[Z]=\beta$ and $\chi(\O_Z)=k$, and is once again a deformation invariant \cite{dt}.  $DT_{\beta,k}$ can likewise be shown to be equal to the integral of the Behrend function of $I_k(\beta,X)$.  
Both $I_k(X,\beta)$ and $P_k(X,\beta)$ can be thought of as
parametrizing pairs $\O\xrightarrow{s}\F$ with $\F$ one-dimensional, though
with respect to different stability conditions:  in Donaldson-Thomas
theory, we require $s$ to be surjective; in Pandharipande-Thomas
theory, $\F$ is required to be pure and $s$ has zero-dimensional kernel.  It is
therefore not surprising that Donaldson-Thomas and Pandharipande-Thomas invariants are related to
each other via a wall-crossing formula (there is a great deal of literature on this---see \emph{e.g.} \cite{bridgeland2011hall,Joyce,KS,toda}).
\subsection{Higher rank stable pair theories}\label{higher pairs}
There are two means by which one can generalize either
of the above invariants on threefolds to higher rank:  if $I_k(X,\beta), P_k(X,\beta)$
parametrize pairs $\O_X\into\F$, the higher moduli spaces
$I_k^{r,n}(X,\beta),P_k^{r,n}(X,\beta)$ should parametrize pairs
$\O_X^n\into \F$ with $\rk \F=r$.  We refer to $n$
as the section rank and $r$ as the sheaf rank.

Both generalizations have been partially treated in recent literature
for $X$ a Calabi-Yau threefold:
\subsubsection*{Higher section rank}  A clear
candidate for $I_k^{0,n}(X,\beta)$ is the Quot scheme of 1-dimensional quotients $Q$ of $\O_X^n$ with
$\chi(Q)=k$ and $[\Supp Q]=\beta$.  These invariants\footnote{The moduli space considered by Toda is not exactly the Quot scheme; there is an additional stability condition.} are computed for
$\beta=0$ and $n=2$ by Toda in \cite{todahigher}, and for more general section
rank $n$ by \cite{nagaohigher} by relating the resulting moduli spaces to quiver
varieties.  Both computations rely on Joyce's wall-crossing formulae \cite{Joyce,KS}.
\subsubsection*{Higher sheaf rank}  The higher sheaf rank moduli spaces
in our sense have not been considered, though for Calabi-Yau
threefolds $X$ Sheshmani \cite{shesh1,shesh2} has defined and computed invariants counting
stable pairs of the form $\O_X(-\ell)^n\into F$ for $F$ pure and
1-dimensional, but with arbitrary rank \emph{on its support}---that
is, $\cherntwo{F}=r[\Supp(F)]$.  Once again, his computations rely on
Joyce's wall-crossing machinery and virtual localization.
\vskip 1em
We view the moduli space $\Syst^n(r,D,k)$ as simultaneously achieving the two analogous generalizations to both higher section rank $n$ and higher sheaf rank $r$ in the surface case.
\subsection{Previous work on stable pairs on $\K3$ surfaces}\label{k3 pairs}  
Let $X$ be a K3 surface and $D$ a divisor class such that every
divisor in $D$ is reduced and irreducible of genus $g$ (again, more generally for $D$ of minimal agree \emph{cf.} Definition \ref{minimal degree}).  Following
\cite{KY}, let $\P=|D|$ be the complete linear system of $D$ and
$X\times\P\supset\mathcal{C}_D\into \P$ the universal divisor.  As noted by \cite{PT3}, the relative Hilbert scheme
$\mathcal{C}_D^{[d]}=\Hilb^d(\mathcal{C}_D/\P)$ parametrizing divisors
$C$ in the class $D$ and subschemes $Z$ of $C$ of length $d$ is the moduli space $P_k(X,D)$ of stable pairs
$\O_X\into \F $ with $\chone{\F}=D$ and $\chi(\F)=d+1-g=k$, where $D^2=2g-2$.
$\mathcal{C}^{[d]}_D$ is smooth, so a reasonable replacement for the
Pandharipande-Thomas invariant is the (signed) topological Euler
characteristic of $\mathcal{C}^{[d]}_D$ \cite{PT3}:
\[N_{D,d}=(-1)^{d+g}\chi(\mathcal{C}^{[d]}_D)\]
Indeed, \cite[\S 3.7]{MPT} show that this invariant can be directly computed from the threefold theory; it is the same as the invariant associated with the virtual class obtained by restricting the symmetric obstruction theory on $P_k(X\times \C,i_* D)$ to $P_k(X,D)  \lhook\joinrel\xrightarrow{i} P_k(X\times \C,i_* D)$ after embedding $i:X\into X\times \C$ as the fiber over $0\in \C$.\footnote{Starting from this construction, Kool and Thomas \cite{KT1,KT2} have more recently defined stable pair invariants for a wider class of surfaces $X$ as an equivariant residue of the threefold invariants of $X\times\C$.  The resulting obstruction theory on $P_k(X,D)$ is not of virtual dimension 0, and the invariants with insertions are therein related to G\"ottsche invariants \cite{Got} (see also \cite{KST} where this is used to prove the G\"ottsche conjecture).  It would be interesting to see if higher rank analogs of these invariants can be defined.}
These invariants are typically organized into generating functions
\[Z_D(y)=\sum_{k\geq 1-g}(-1)^{k+2g-1}\chi(\mathcal{C}_D^{[k+g-1]})y^{k}\]
The functions $Z_D(y)$ are studied in detail by \cite{MPT} for primitive
$D$.  There they
show:
\begin{enumerate}
\item[(a)] $Z_D(y)=Z_g(y)$ only depends on the genus $g$ of $D$, and by \cite[Theorem 5.80]{KY}
\begin{align}F(q,y):&=-\sum_{g\geq 0}q^{g-1}Z_g(-y)\notag\\
&=\frac{1}{\left(\sqrt{y}-\frac{1}{\sqrt{y}}\right)^{2}}\prod_{n\geq 1}\frac{1}{(1-yq^n)^2(1-q^n)^{20}(1-y^{-1}q^n)^2}\notag\\
&=\frac{s(q)}{\left(\sqrt{y}-\frac{1}{\sqrt{y}}\right)^{2}}\prod_{n\geq 1}\frac{(1-q^n)^4}{(1-yq^n)^2(1-y^{-1}q^n)^2}\label{ky compute}
\end{align}
where $s(q)=S(q)|_{t=\bar t=1}=\prod_{n\geq 1}(1-q^n)^{-24}$ is the generating function of the Euler characteristics of the Hilbert schemes of points on a K3.
\item[(b)] $Z_g(y)$ is related to the reduced Gromov-Witten potentials of a
  $\K3$ surface via a change of variables $-y=e^{iv}$ analogous to the duality for
  Calabi-Yau threefolds conjectured in \cite{mnop1,mnop2}.
\item[(c)] The coefficient of $v^i$ in the full partition function 
\[F(q,y)=\sum_{g\geq 0}Z_g(y)q^{g-1}\]
after the substitution $y=e^{iv}$ is the $q$-expansion of a
quasi-modular form \cite[Theorem 4]{MPT}.
\end{enumerate}

\subsection*{Outline}  The outline of the paper is as follows.  In Section \ref{sect:2} we recall the moduli theory of stable pairs on a K3 surface $X$.  The key relationship between the relevant moduli spaces is developed in \S\ref{properties}.  In Section \ref{sect:3} we compute the generating functions \eqref{gen} using the geometry from Section \ref{sect:2}.  In \S\ref{subsect:KY} we express the general invariants in terms of the $r=0,n=1$ theory; in \S \ref{euler}, we compute the generating functions of the Euler characteristics and prove that the $v$-coefficients, after setting $y=e^{iv}$, are modular forms.  The less enlightening computations used in the course of Section \ref{sect:3} are collected in Section \ref{sect:4}.

\subsubsection*{Acknowledgements}The authors would like to thank D. Maulik and J. Tsimerman for valuable conversations.  The first author would like to thank R. Pandharipande in particular for introducing the authors to the subject matter and for many enlightening discussions which greatly improved the content and exposition of the paper.  The second author is grateful for Dinakar Ramakrishnan for helpful comments. Both authors would finally like to thank the referee for many helpful suggestions.  Part of the research reported here was completed while the authors were graduate students at Princeton University. 

\section{The moduli theory of sheaves and stable pairs on K3 surfaces}\label{sect:2}
Throughout this section, let $X$ be an algebraic K3 surface over $\C$.  The Mukai lattice of $X$ is the total cohomology ring $H^*(X,\Z)$ together with the pairing
\[(v,w)=-\int_{X}v^\cech w=\int_X(v_1w_1-v_0w_2-v_2w_0)\]
where for $v=v_0+v_1+v_2\in H^*(X,\Z)$, $v_i\in H^{2i}(X,\Z)$ are the homogeneous components, and similarly for $w$.  We will denote by $\omega\in H^4(X,\Z)$ the Poincar\'{e} dual to the point class.  Using the canonical isomorphisms $H^0(X,\Z)\cong\Z$ and $H^4(X,\Z)\cong\Z$, we will write $v=(r,D,a)$ for integers $r,a$ when $v_0=r$, $v_1=D$, $v_2=a\omega$.  Note that
\[\Td(X)=1+2\omega\]
Given a coherent sheaf $\E$ on $X$, the Mukai vector of $\E$ is
\begin{align*}
v(\E)&=\chernchar{\E}\sqrt{\Td(X)}\\
&=\rk(\E)+\chone{\E}+\left(\rk(\E)\omega+\frac{\chone{\E}^2}{2}-\cherntwo{\E}\right)\\
&=(\rk(\E),\chone{\E},\chi(\E)-\rk(\E))
\end{align*}
by Gronthendieck-Riemann-Roch.  The Mukai pairing is defined so that, for any coherent sheaves $\E,\F$ on $X$,
\[(v(\E),v(\F))=-\chi(R\Hom(\E,\F))\]
Most of the following sections are adapted from the treatment in \cite{KY}.
\subsection{Moduli of Sheaves}\label{moduli of sheaves}
Let $H$ be an ample divisor on $X$, $v=(r,D,a)\in H^*(X,\Z)$ a Mukai vector, and assume $v_1=D$ is primitive.  Recall that a coherent sheaf $\E$ on $X$ is Gieseker stable (resp. semistable) if for any subsheaf $\F\subset \E$, the Hilbert polynomials satisfy $\chi(\F\otimes H^n)<\chi(\E\otimes H^n)$ (resp. $\chi(\F\otimes H^n)\leq\chi(\E\otimes H^n)$) for $n\>>0$.  Throughout the following, by (semi)stability we will mean Gieseker (semi)stability with respect to $H$.  Let $M(v)$ be the moduli space of semistable sheaves $\E$ with $v(\E)=v$.  A well known theorem of Huybrechts \cite{huydef} (for a nice exposition see \cite[6.2.16]{huybrechts}) states that
\begin{theorem}\label{generic}  For generic $H$, $M(v)$ is a smooth projective irreducible symplectic variety of dimension $2+(v,v)=2(g-ra)$ deformation equivalent to the Hilbert scheme of $g-ra$ points $X^{[g-ra]}$ on $X$.
\end{theorem}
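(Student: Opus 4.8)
The plan is to prove the statement in four stages: the infinitesimal (local) structure giving smoothness and the dimension count, Mukai's symplectic form, projectivity and fineness, and finally the global assertions of irreducibility and deformation equivalence.

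First I would analyze the deformation theory at a point $[\E]\in M(v)$ corresponding to a stable sheaf $\E$. Stability forces $\E$ to be simple, so $\Hom(\E,\E)=\C$, and since $K_X\cong\O_X$ Serre duality gives $\Ext^2(\E,\E)\cong\Hom(\E,\E)^\cech\cong\C$. The Zariski tangent space at $[\E]$ is $\Ext^1(\E,\E)$, and the obstruction to extending a first-order deformation lies in the kernel of the trace map $\tr\colon\Ext^2(\E,\E)\to H^2(X,\O_X)=\C$. As this trace is an isomorphism, the trace-free part $\Ext^2(\E,\E)_0$ vanishes and all obstructions die, so $M(v)$ is smooth with $\dim_{[\E]}M(v)=\dim\Ext^1(\E,\E)$. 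I would then extract the dimension from the pairing identity $(v,v)=-\chi(\RHom(\E,\E))=\dim\Ext^1(\E,\E)-2$, which uses only $\hom=\ext^2=1$; combined with $(v,v)=D^2-2ra=(2g-2)-2ra$ this yields $\dim M(v)=2+(v,v)=2(g-ra)$.

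Next I would construct the holomorphic symplectic form following Mukai. Yoneda composition followed by the trace defines an antisymmetric pairing $\Ext^1(\E,\E)\times\Ext^1(\E,\E)\to\Ext^2(\E,\E)\xrightarrow{\tr}\C$ on each tangent space, and Serre duality shows it is nondegenerate; globalizing this fiberwise pairing produces a nowhere-degenerate $2$-form $\sigma$ on $M(v)$, and one checks it is closed, so $(M(v),\sigma)$ is holomorphic symplectic. For projectivity I would invoke the GIT construction of the moduli space as a quotient of a suitable Quot scheme (Gieseker--Maruyama--Simpson): $M(v)$ is a projective scheme. The genericity hypothesis on $H$, together with $D=v_1$ primitive, guarantees that one avoids all walls on which strictly semistable sheaves appear, so semistability coincides with stability; the GIT quotient is then geometric, $M(v)$ is a fine moduli space, and the argument of the previous paragraph applies at every point.

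The main obstacle is the global topology: nonemptiness, irreducibility, and deformation equivalence to $X^{[g-ra]}$. Here I would reduce to the Hilbert-scheme case. Writing $n=g-ra$, the target is the moduli vector $(1,0,1-n)$, whose stable sheaves are ideal sheaves of length-$n$ subschemes, so $M(1,0,1-n)\cong X^{[n]}$, which is known to be irreducible, holomorphic symplectic, and of the asserted dimension. The task is to connect $M(r,D,a)$ to $M(1,0,1-n)$ while preserving $(v,v)$ and the symplectic structure. Two complementary techniques do this: Fourier--Mukai transforms built from a universal family on an auxiliary moduli space, which induce birational-to-isomorphic identifications of moduli spaces for Mukai vectors sharing the same self-pairing; and Huybrechts's deformation argument, in which one deforms the pair $(X,H)$ within the period domain keeping $v$ of Hodge type $(1,1)$ and $H$ generic, so that the $M(v)$ assemble into a smooth proper family whose hyperk\"ahler structure is locally constant, allowing one to transport both irreducibility and deformation type from a conveniently chosen fiber. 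Controlling these deformations---ensuring the moduli space stays smooth, nonempty, and connected throughout, and that the Fourier--Mukai equivalences restrict to the relevant stable loci---is the deep part of the proof, and is precisely the content of the cited work of Mukai, O'Grady, Yoshioka, and Huybrechts; I would treat it by citing \cite{huydef} rather than reproducing it.
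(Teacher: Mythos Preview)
The paper does not actually prove Theorem~\ref{generic}: it is stated as a well-known result of Huybrechts, with citations to \cite{huydef} and \cite[6.2.16]{huybrechts}, and is then used as a black box throughout. Your proposal is therefore not in tension with the paper's proof---there is none to compare against---but is rather a correct sketch of the argument one finds in the cited sources. The local analysis (simplicity, Serre duality on a K3, vanishing of trace-free $\Ext^2$, Mukai's pairing and symplectic form) and the GIT construction are standard and accurately summarized; you are also right that the genuinely hard content is the global statement of irreducibility and deformation equivalence to $X^{[g-ra]}$, and that this is precisely what Huybrechts (building on Mukai, O'Grady, and Yoshioka) supplies. Since you ultimately defer that step to \cite{huydef} as well, your proposal and the paper end up invoking the same external input, with the only difference being that you unpack the elementary parts while the paper cites the whole package.
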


We will be concerned with the case when $D$ is of minimal degree:
\begin{definition}\label{minimal degree} A divisor class $D\in\Pic(X)$ has minimal degree if $D.H>0$ and no positive line bundle has smaller intersection product with $H$, that is
\[D. H=\min\{L.H|L\in\Pic(X), L.H>0\}\]
\end{definition}
Clearly every divisor of minimal degree is primitive.  The main importance of this definition is that for any divisor class $D$ of minimal degree, every divisor in that class is integral, and therefore moduli spaces of sheaves $\E$ with $v_1=D$ will be well-behaved.  For any genus $g$, there is a suitable K3 surface with a divisor class $D$ of genus $g$ and minimal degree:

\begin{examples}\label{examples}
\begin{enumerate}
\item If $X$ is an elliptic K3 surface with section, $\Pic(X)=\Z\sigma\oplus\Z f$, where $f$ is the fiber class and $\sigma$ the section class.  Choosing $H=\sigma+3f$ to be the ample class, we have
\[(a\sigma+b f).H=a+b\]
$\sigma$ and $f$ are clearly of minimal degree, since both have intersection product 1 with $H$.
\item If $X$ has Picard rank one and $H$ is the ample generator, then $D=H$ has minimal degree.
\end{enumerate}
\end{examples}
\begin{lemma}If $v_1=D$ is of minimal degree with respect to $H$, then
  $H$ is generic in the sense of Theorem \ref{generic}.
\end{lemma}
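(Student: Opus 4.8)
The plan is to unwind the word ``generic.'' The genericity hypothesis in Theorem~\ref{generic} (following Huybrechts \cite{huydef}) is precisely the condition that $H$ lie on no wall for the Mukai vector $v=(r,D,a)$, equivalently that every $H$-semistable sheaf $\E$ with $v(\E)=v$ is in fact $H$-stable; when this holds, $M(v)=M^s(v)$ is a smooth projective holomorphic symplectic variety by Mukai's theory, which is the content of Theorem~\ref{generic}. So I would reduce the lemma to the statement that, when $D$ has minimal degree, no sheaf of Mukai vector $(r,D,a)$ is strictly semistable. Supposing for contradiction that $\E$ is strictly semistable, I would extract a saturated destabilizing subsheaf $\F\subset\E$ with the same reduced Hilbert polynomial as $\E$; comparing leading terms yields the slope equality $\mu_H(\F)=\mu_H(\E)$, that is $\tfrac{\chone{\F}.H}{\rk\F}=\tfrac{D.H}{r}$.

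I would treat the positive-rank case $r>0$ first. Here $\E$ is torsion-free, so $\F$ has rank $r'\geq 1$, and saturatedness forces $\E/\F$ to be torsion-free, whence $r'<r$ (otherwise $\E/\F$ is a rank-$0$ torsion-free sheaf, hence zero). Writing $D'=\chone{\F}\in\Pic(X)$, the slope equality becomes $D'.H=\tfrac{r'}{r}D.H$. Since $D.H>0$ by the definition of minimal degree and $0<r'<r$, the integer $D'.H$ satisfies $0<D'.H<D.H$, directly contradicting $D.H=\min\{L.H\mid L\in\Pic(X),\ L.H>0\}$. The only point requiring care is this last chain of inequalities: positivity of $D'.H$ comes from $\mu_H(\F)=\mu_H(\E)>0$ and $r'>0$, while strict smallness comes from $r'<r$; the essential input is simply that $\chone{\F}$ lands in $\Pic(X)$, where minimality applies.

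For the torsion case $r=0$ I would argue via the support. As minimal degree implies primitivity, I claim $\E$ has generic rank $1$ along an integral curve $C\in|D|$: by Grothendieck--Riemann--Roch a sheaf of generic rank $\rho$ on $C$ has $c_1=\rho[C]$, so $\chone{\E}=D$ together with $[C].H>0$ and minimality forces $\rho=1$, since $\rho\geq 2$ would make $[C].H=\tfrac{1}{\rho}D.H$ a smaller positive intersection. Because $C$ is integral, every nonzero pure one-dimensional subsheaf $\F\subset\E$ is supported on all of $C$ with generic rank $1$, so $\chone{\F}=D$ and $\E/\F$ is zero-dimensional; hence $\chi(\F)<\chi(\E)$ unless $\F=\E$, and the reduced Hilbert polynomials $n+\chi(\cdot)/(D.H)$ satisfy $p(\F)<p(\E)$ strictly, so $\E$ is stable. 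In both regimes the absence of strictly semistable sheaves shows $H$ is generic. The main obstacle is purely organizational—handling the two regimes uniformly—since the arithmetic heart, an integer strictly between $0$ and the minimal degree, is immediate once the slope equality is in hand.
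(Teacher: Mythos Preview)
Your proof is correct and follows the same strategy as the paper: both reduce ``$H$ generic'' to the assertion that semistability coincides with stability for sheaves of Mukai vector $(r,D,a)$, and for $r>0$ both derive the slope equality $\chone{\F}.H/\rk\F=\chone{\E}.H/\rk\E$ and invoke minimality of $D.H$ to rule out a proper subsheaf of smaller rank. In the $r=0$ case you are in fact more careful than the paper---which simply asserts that purity is equivalent to stability---since your argument via integrality of the support and generic rank one (both forced by minimal degree) supplies the honest justification.
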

\begin{proof}This follows from the fact the semistability implies stability.
When $r>0$,  for sheaves $\E$ with $v(\E)=(r,D,a)$ of minimal degree, if $\F\subsetneq \E$ with $\frac{\chone{\F}.H}{\rk \F}<\frac{\chone{\E}.H}{\rk\E}$ then clearly $\chi(\F\otimes H^n)<\chi(\E\otimes H^n)$ for $n>>0$ since the equality either holds for the leading coefficient ($\frac{\rk\F}{2}<\frac{\rk \E}{2}$) or for the next coefficient.  If $\frac{\chone{\F}.H}{\rk \F}=\frac{\chone{\E}.H}{\rk\E}$, then $\rk\F=\rk\E$ and $\chone{\F}.H=\chone{E}.H$, for otherwise $\det\F$ would be a positive line bundle with smaller degree.  $\E/\F$ then has dimension 0 and again $\chi(\F\otimes H^n)<\chi(\E\otimes H^n)$ for $n>>0$.

When $r=0$, semistability and stability are both equivalent to purity even without the assumption of minimal degree.
\end{proof}
\subsection{A Stratification of the Moduli Spaces}\label{stratification}
In the setup of $\S\ref{moduli of sheaves}$ suppose further that $M(v)$ is a fine moduli space, so there exists a universal sheaf $\F$ on $X\times M(v)$, flat over $M(v)$, such that for every point $p=[\E]\in M(v)$, the restriction of $\F$ to $X\times p$ is $\E$.  For our purposes we need only consider the case when the Euler characteristic $\chi=-(v(\O),v)\geq0$ (\textit{cf.}, Remark \ref{restrict}).  Let $\pi:X\times M(v)\into M(v)$ be the projection, and  consider the subsets, 
\begin{eqn}\label{strat}
M(v)_i=\{[\E]\in M(v)|\dim H^0(\E)=i\}
\end{eqn}with the induced reduced subscheme structure.  By the semicontinuity theorem, we have immediately 
\begin{lemma}If $M(v)$ is a fine moduli space, then $\{M(v)_i\}_{i\geq 0}$ is a locally closed stratification of $M(v)$.
\end{lemma}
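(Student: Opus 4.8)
The plan is to realize the stratification as the level sets of the upper semicontinuous function $p\mapsto\dim H^0(X,\F_p)$ and to read off local closedness from the semicontinuity theorem for cohomology. First I would fix notation: since $M(v)$ is fine, we have the universal sheaf $\F$ on $X\times M(v)$, flat over $M(v)$, with $\F_p:=\F|_{X\times p}\cong\E$ for $p=[\E]$. Define $\varphi\colon M(v)\to\Z$ by $\varphi(p)=\dim_{\C}H^0(X,\F_p)$, so that $M(v)_i=\varphi^{-1}(i)$ by construction. The pieces are then automatically pairwise disjoint and cover $M(v)$, and the only substantive point is to show each $M(v)_i$ is locally closed.

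The key input is the semicontinuity theorem. Since $\pi\colon X\times M(v)\to M(v)$ is proper (as $X$ is proper over $\C$) and $\F$ is coherent and flat over $M(v)$, the function $\varphi$ is upper semicontinuous in the Zariski topology. Consequently, for each $i\geq 0$ the set
\[
Z_i:=\{p\in M(v):\dim H^0(X,\F_p)\geq i\}
\]
is Zariski closed, and these fit into a descending chain $M(v)=Z_0\supseteq Z_1\supseteq Z_2\supseteq\cdots$ of closed subsets.

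To conclude, I would write $M(v)_i=Z_i\setminus Z_{i+1}=Z_i\cap(M(v)\setminus Z_{i+1})$, exhibiting each stratum as the intersection of a closed set with an open set, hence locally closed, and endow it with the reduced structure as in \eqref{strat}. Because $M(v)$ is Noetherian the chain $Z_0\supseteq Z_1\supseteq\cdots$ stabilizes, and since each $H^0(X,\F_p)$ is finite-dimensional the stable value of the chain must be empty; thus only finitely many strata are nonempty and they genuinely partition $M(v)$. If one wants the frontier condition as well, it follows from $\overline{M(v)_i}\subseteq Z_i=\bigcup_{j\geq i}M(v)_j$, so the closure of each stratum is a union of strata. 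I do not anticipate any real obstacle here: the entire content is the semicontinuity theorem, and the only hypotheses to check are flatness of $\F$, which is built into fineness of the moduli space, and properness of $\pi$, which is immediate. This is why the statement follows, as indicated, immediately.
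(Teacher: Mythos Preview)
Your argument is correct and is exactly the intended one: the paper's own proof is simply the sentence ``By the semicontinuity theorem, we have immediately,'' and you have supplied the standard unpacking of this---using the universal sheaf to realize $p\mapsto\dim H^0(X,\F_p)$ as an upper semicontinuous function and writing each stratum as $Z_i\setminus Z_{i+1}$.
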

In general $M(v)$ need not have a universal family, but \'etale-locally it does.  The cohomology of coherent sheaves can be computed \'etale-locally, and closed and open immersions are both \'etale local properties, so
\begin{proposition}$\{M(v)_i\}_{i\geq 0}$ is a (finite) locally closed stratification of $M(v)$.
\end{proposition}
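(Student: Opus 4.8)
The plan is to bootstrap the fine case settled by the previous Lemma to the general case via \'etale descent, exactly as the surrounding text suggests. First I would fix an \'etale cover $\{f_\alpha\colon U_\alpha\into M(v)\}$ on which universal families exist; this is possible because the stack of stable sheaves with Mukai vector $v$ is a $\C^*$-gerbe over $M(v)$ (stable sheaves have only scalar automorphisms), and such a gerbe is \'etale-locally trivial, so over each $U_\alpha$ there is a sheaf $\F_\alpha$ on $X\times U_\alpha$, flat over $U_\alpha$, whose restriction to each fiber $X\times u$ is the sheaf $\E$ parametrized by $f_\alpha(u)$. Applying the Lemma to $U_\alpha$ equipped with $\F_\alpha$, the loci $(U_\alpha)_i=\{u\in U_\alpha\mid\dim H^0(\F_\alpha|_{X\times u})=i\}$ form a locally closed stratification of $U_\alpha$.

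The second step is to match these with the pullbacks of the $M(v)_i$. For $u\in U_\alpha$ with $f_\alpha(u)=[\E]$ we have $\F_\alpha|_{X\times u}\cong\E$, so $\dim H^0(\F_\alpha|_{X\times u})=\dim H^0(\E)$; that is, the fiberwise $h^0$ is computed \'etale-locally and is unchanged by the base change $f_\alpha$. Hence $(U_\alpha)_i=f_\alpha^{-1}(M(v)_i)$ as subsets, and so each $f_\alpha^{-1}(M(v)_i)$ is locally closed in $U_\alpha$.

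The last step is to descend local closedness along the cover. Since an \'etale morphism is continuous and open, taking closures commutes with pullback, giving $f_\alpha^{-1}\bigl(\overline{M(v)_i}\bigr)=\overline{f_\alpha^{-1}(M(v)_i)}=\overline{(U_\alpha)_i}$. Thus $f_\alpha^{-1}(M(v)_i)$ is open in $f_\alpha^{-1}\bigl(\overline{M(v)_i}\bigr)$ for every $\alpha$; because the $f_\alpha$ are open and jointly surjective, openness may be tested on the cover, and therefore $M(v)_i$ is open in $\overline{M(v)_i}$, i.e. locally closed in $M(v)$. The $M(v)_i$ are pairwise disjoint and cover $M(v)$ by definition, and only finitely many are nonempty since $M(v)$ is quasi-compact and $[\E]\mapsto\dim H^0(\E)$ is upper semicontinuous and hence bounded; this yields the finite locally closed stratification.

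I expect the descent of local closedness to be the only genuinely delicate point, precisely because ``locally closed'' is a condition relating a subset to its closure rather than a property that can be verified pointwise. The facts that make it go through are that \'etale maps are open, so closures pull back correctly, and that openness descends along open surjective families; the \'etale-local existence of a universal family is standard, and had one only a quasi-universal family with fiber $\E^{\oplus\rho}$, the argument would be unchanged, since passing to it merely rescales the index $i$ by the similitude $\rho$ and hence pulls the strata back to the same loci.
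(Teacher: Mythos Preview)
Your argument is correct and follows the same route as the paper's proof, which is only a two-line sketch: \'etale-locally a universal family exists, fiberwise cohomology is computed \'etale-locally, and being a closed or open immersion is an \'etale-local property on the target. You have simply unpacked the last step carefully (commuting closure with pullback along open maps, then descending openness along the surjective \'etale cover), which is exactly what the paper's phrase ``closed and open immersions are both \'etale local properties'' is invoking.
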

\begin{remark}\label{generic piece}
Since the second cohomology vanishes for any stable sheaf $\E$ with Mukai vector $v$, 
\[\dim H^0(\E)\geq \chi(\E)=\chi=r+a\]
From Brill-Noether theory, we know for $D$ of minimal degree that:  (i) the generic stratum is in fact $M(v)_{r+a}$; (ii) each $M(v)_i$ for $0\leq i<r+a$ is empty; and (iii) each $M(v)_i$ for $i\geq r+a$ is of the expected dimension (when the expected dimension is nonnegative).  See for example \cite{BNtheory}.
\end{remark}
\subsection{Properties of Stable Pairs on K3 Surfaces}\label{properties}
Throughout this section, (semi)stability will mean Gieseker (semi)stability.

We briefly recall Le Potier's notion of a coherent system \cite{LP1}, henceforth referred to as a stable pair\footnote{There are many overlapping notions of stability of maps of sheaves, and equally many disparate terminologies (\emph{e.g.} coherent systems, framed sheaves, stable pairs).  The objects we study are closest to coherent systems, but we refer to them throughout as ``stable pairs" for general ease of exposition and because we are ultimately interested in interpreting coherent systems as higher rank stable pairs, in primitive divisor classes.  In this case coherent systems of dimension 1 with $\dim U=1$ and stable pairs in the sense of \cite{PT1} in fact coincide; in non-primitive divisor classes they do not.  We thank the referee for bringing this to our attention.}
\begin{definition}A stable pair $(U,\E)$ on $X$ is a stable sheaf $\E$ and a subspace $U\subset \Hom(\O,\E)$.  We will often denote a stable pair $(U,\E)$ by the corresponding evaluation map $U\otimes\O\into\E$.  A morphism $(U,\E)\into(U',\E')$ consists of morphisms $U\into U'$ and $\E\into\E'$ such that
\[\xymatrix{
U\otimes\O\ar[r]\ar[d]&\E\ar[d]\\
U'\otimes\O\ar[r]&\E'\\
}\]
commutes.  The Mukai vector of a stable pair $(U,\E)$ is the Mukai vector of $\E$, and the section rank of $(U,\E)$ is $\dim U$.
\end{definition}
There is an obvious relative notion of stable pair.  For a scheme $S$, let $\pi:X\times S\into S$ be the projection.  A family of stable pairs $(\mathcal{U},\E)$ on $X\times S/S$ is a sheaf $\E$ on $X\times S$ flat over $S$, a locally free sheaf $\mathcal{U}$ on $S$, and a morphism $\pi^*\mathcal{U}\into \E$ such that the restriction to each fiber of $\pi$ is a stable pair in the usual sense.  A morphism of relative stable pairs $(\mathcal{U},\E)$ and $(\mathcal{U'},\E')$ is again given by morphisms $\mathcal{U}\into \mathcal{U}'$ and $\E\into\E'$ such that
\[\xymatrix{
f^*\U\ar[r]\ar[d]&\E\ar[d]\\
f^*\U'\ar[r]&\E'\\
}\]
commutes.  By \cite{LP1} the moduli functor of stable pairs with Mukai vector $v$ and section rank $n$ is (coarsely) representable by a projective scheme $\Syst^n(v)$, and the obvious forgetful morphism $p:\Syst^n(v)\into M(v)$ is projective.

The following lemma of Yoshioka will control the geometry of $\Syst^n(v)$:

\begin{lemma}\label{result}Let $X$ be a K3 surface, $D$ a divisor on $X$ of minimal degree, and $\E$ a stable sheaf on $X$ with $\chone{\E}=D$.  Then
\begin{enumerate}
\item Given a subspace $U\subset \Hom(\O,\E)$, let $\phi:U\otimes\O\into\E$ be the evaluation map.  Either
\begin{enumerate}
\item $\dim U<\rk \E$, in which case $\phi$ is injective,
\[0\into U\otimes\O\into\E\into\F\into 0\]
and the quotient $\F$ is stable.
\item $\dim U\geq \rk \E$, in which case $\phi$ is not injective,
\[0\into\F\into U\otimes\O\into\E\into Q\into 0\]
and the kernel is stable and locally free, while the quotient $Q$ is dimension 0.
\end{enumerate}
\item Given $V\subset \Ext^1(\E,\O)$, then in the corresponding extension
\[0\into V^*\otimes\O\into \F\into\E\into0 \]
$\F$ is stable.
\end{enumerate}
\end{lemma}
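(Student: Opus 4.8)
The plan is to deduce all three statements from two consequences of $D$ having minimal degree (Definition \ref{minimal degree}): first, that for sheaves with $c_1 = D$ Gieseker semistability coincides with stability (as in the proof of the lemma of \S\ref{moduli of sheaves}); and second — the real engine — that \emph{no} line bundle class $L$ satisfies $0 < L.H < D.H$, and hence by negation none satisfies $-D.H < L.H < 0$. I would also record three elementary facts: a globally generated sheaf has effective first Chern class; a nonzero effective class pairs strictly positively with the ample $H$; and a globally generated torsion-free sheaf $G$ with $c_1(G)=0$ is trivial, $G\cong\O^{\rk G}$. For the last, take $\rk G$ general sections $\O^{\rk G}\into G$: the cokernel is torsion with vanishing $c_1$, hence $0$-dimensional, and since $\Ext^1$ of a $0$-dimensional sheaf into $\O$ vanishes on the smooth surface $X$, the inclusion splits and torsion-freeness forces the cokernel to vanish.

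First I would analyze the evaluation map $\phi$ in (1). The image $\Im\phi$ is a quotient of $U\otimes\O$, hence globally generated, so $c_1(\Im\phi)$ is effective; by the degree gap it is either $0$ or has $c_1(\Im\phi).H\geq D.H$. Since $\Im\phi\subseteq\E$, stability gives $\mu(\Im\phi)\leq\mu(\E)$, and combined with $c_1(\Im\phi).H\geq D.H$ this forces $\rk\Im\phi\geq\rk\E$; the alternative $c_1(\Im\phi)=0$ forces $\Im\phi\cong\O^{\,\rk\Im\phi}$ by the triviality fact. In case (a), $\dim U<\rk\E$, only $c_1(\Im\phi)=0$ survives; were $\phi$ non-injective, the surjection $U\otimes\O\twoheadrightarrow\Im\phi\cong\O^{\,m}$ with $m<\dim U$ would be a constant matrix (as $\Hom(\O,\O)=\C$ on $X$), exhibiting the sections spanning $U$ as linearly dependent — a contradiction — so $\phi$ is injective. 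In case (b), $\dim U\geq\rk\E$, the same linear-dependence argument excludes the trivial-image possibility, so $\Im\phi$ has full rank $\rk\E$ and $Q=\coker\phi$ is torsion; then $c_1(\Im\phi)$ and $c_1(Q)=D-c_1(\Im\phi)$ are two effective classes whose $H$-degrees sum to $D.H$, and the degree gap forbids both from being positive, forcing $c_1(Q)=0$, i.e. $Q$ is $0$-dimensional. The kernel $\F$ is saturated in $\O^{\dim U}$ (its quotient embeds in the torsion-free $\E$), hence reflexive and therefore locally free on $X$.

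The heart of the argument — and the step I expect to be the main obstacle — is the stability of the three new sheaves: the quotient $\F$ in (a), the kernel $\F$ in (b), and the extension $\F$ in (2). In each case I would take a saturated would-be destabilizing subsheaf $\F'$ and transport it through the defining short exact sequence: for (a) and (b) replace $\F'$ by its preimage (resp. image) in $\E$, for (2) by the pair consisting of $\F'\cap(V^*\otimes\O)\subseteq\O^{\dim V}$ and the image $\bar{\F'}\subseteq\E$. Because the trivial factor contributes no $c_1$, the stability of $\E$ bounds $c_1(\F').H$ from one side while the destabilizing hypothesis $\mu(\F')\geq\mu(\F)$ bounds it from the other, and a short computation shows these two bounds squeeze $c_1(\F').H$ into the open interval $(0,D.H)$ in cases (a) and (2), respectively $(-D.H,0)$ in case (b) where $c_1(\F)=-D$. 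This is exactly the interval excluded by minimal degree, so no destabilizing subsheaf exists. For (a) properness of a saturated $\F'$ already makes the relevant inequality strict, so the contradiction is immediate; the only care is needed at the boundary of (2), where the transported subsheaf meets the trivial factor and maps onto $\E$ in full rank, and there one compares the constant (Euler-characteristic) term of the reduced Hilbert polynomials, using that equality of slopes would contradict the strict Gieseker stability of $\E$. I expect packaging this uniform degree-gap contradiction, rather than any single case, to be where the care is needed; part (2) can alternatively be obtained from (1)(a) by Serre duality on the K3 surface, identifying $\Ext^1(\E,\O)\cong H^1(\E)^{\cech}$, but the direct argument above is cleaner and parallels (1).
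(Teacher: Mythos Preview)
The paper does not prove this lemma at all; it simply cites \cite[Lemma~2.1]{reflect}. So your direct argument already goes further than the paper, and the overall strategy---using the degree gap forced by minimality of $D$ to trap the first Chern class of a would-be destabilizer in a forbidden interval---is indeed Yoshioka's.

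That said, your stability argument in case (1)(b) has a genuine gap. You propose to ``replace $\F'$ by its image in $\E$'', but $\F'\subset\F=\ker(\O^n\to\E)$ maps to zero in $\E$, so there is nothing to transport and stability of $\E$ gives no direct bound here. The relevant upper bound on $c_1(\F').H$ comes instead from the inclusion $\F'\subset\O^n$: the quotient $\O^n/\F'$ is globally generated, so $-c_1(\F')$ is effective, giving only $c_1(\F').H\leq 0$. Thus your interval is $(-D.H,\,0]$, not the open $(-D.H,0)$, and the boundary $c_1(\F')=0$ is not excluded---indeed $\mu(\F')=0>\mu(\F)$ would be an honest $\mu$-destabilizer. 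To close this you must reuse your triviality/linear-independence step: if $c_1(\F')=0$ then the saturation of $\F'$ in $\O^n$ is isomorphic to $\O^{\rk\F'}$; since $\E$ has no zero-dimensional subsheaves this saturation still lies in $\ker(\O^n\to\E)=\F$ and hence equals the already-saturated $\F'$; but then the nonzero space $H^0(\F')\cong\C^{\rk\F'}$ sits inside $\ker\big(U\hookrightarrow H^0(\E)\big)=0$, a contradiction. (A smaller point: your justification that $\F$ is locally free in (1)(b)---``its quotient embeds in the torsion-free $\E$''---fails when $\rk\E=0$; the correct reason is that $\Im\phi\subset\E$ is pure of positive dimension, so the reflexive hull of $\F$ in $\O^n$ cannot enlarge it.)
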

\begin{proof}
See \cite[Lemma 2.1]{reflect}.
\end{proof}
This has a number of geometric consequences.  For example, we have 
\begin{theorem}\label{main}\cite[Lemma 5.117]{KY}.  Let $X$ be a K3 surface, $v\in H^*(X,\Z)$ a Mukai vector.  For $v_1=D$ of minimal degree, $\Syst^n(v)$ is smooth.
\end{theorem}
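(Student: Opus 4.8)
The plan is to establish smoothness infinitesimally, through the deformation theory of the coherent system itself, rather than through the forgetful map $p\colon\Syst^n(v)\to M(v)$: although the base is smooth by Theorem \ref{generic}, the fibers $\Gr(n,\dim H^0(\E))$ jump over the Brill--Noether strata of Remark \ref{generic piece}, so no naive bundle structure is available. Writing the pair as the two-term complex $I^\bullet=[\,\Gamma\otimes\O\xrightarrow{\varphi}\E\,]$ with $\Gamma=U$, I would use that its infinitesimal deformations and obstructions (deforming $\E$ among stable sheaves and $\varphi$, with $\Gamma$ a fixed vector space) are governed by the hypercohomology of the cone $K^\bullet$ of the precomposition map $R\SHom(\E,\E)_0\to R\SHom(\Gamma\otimes\O,\E)$, with tangent space $\mathbb{H}^1(K^\bullet)$ and obstruction space $\mathbb{H}^2(K^\bullet)$. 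The entire problem is then to show $\mathbb{H}^2(K^\bullet)=0$ at every point.

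The formal part is quick. The long exact sequence of the cone presents $\mathbb{H}^2(K^\bullet)$ as an extension of a subspace of $\Ext^2(\E,\E)_0$ by $\coker\!\big(\Ext^1(\E,\E)\to\Gamma^*\otimes H^1(\E)\big)$, the map being precomposition with $\varphi$. Since $\E$ is stable on a K3 surface, $\Hom(\E,\E)=\C$ and Mukai's argument gives $\Ext^2(\E,\E)_0=0$ --- this is exactly the vanishing responsible for the smoothness of $M(v)$ in Theorem \ref{generic}, and it is here that minimal degree is used, since it forces every semistable sheaf of type $v$ to be stable. Because $\chone{\E}=D$ satisfies $D.H>0$, any nonzero map $\E\to\O$ violates stability, so $\Hom(\E,\O)=0$ and hence $H^2(\E)\cong\Hom(\E,\O)^\cech=0$ by Serre duality, while $\Ext^{\geq 3}(\E,\E)=0$ on a surface. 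These dispatch every contribution except the cokernel above.

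The crux is therefore to show that precomposition with $\varphi$ surjects onto $\Gamma^*\otimes H^1(\E)$, and this is precisely where Lemma \ref{result} and the minimal-degree hypothesis do the real work. In the case $\dim U<\rk\E$ the sequence $0\to\Gamma\otimes\O\to\E\xrightarrow{\psi}\F\to 0$ has $\F$ stable, and applying $\RHom(-,\E)$ together with Serre duality identifies the cokernel with $(\Hom(\E,\F)/\C\cdot\psi)^\cech$; surjectivity thus amounts to $\Hom(\E,\F)=\C\cdot\psi$, which I would deduce from the stability of both $\E$ and $\F$ and the minimal-degree constraint forcing any intermediate first Chern class to be $0$ or $D$. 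The case $\dim U\geq\rk\E$ is symmetric, using the sequence $0\to\F\to\Gamma\otimes\O\to\E\to Q\to 0$ of Lemma \ref{result}(1b) with $\F$ the stable locally free kernel and $Q$ zero-dimensional. Conceptually the cleanest route is to bypass this computation via Yoshioka's reflection (Lemma \ref{result}(2)): the universal extension by $\Ext^1(\E,\O)^\cech\otimes\O$ preserves stability in families and exhibits $\Syst^n(v)$ as a Grassmannian bundle over a smooth moduli space of sheaves, whence smoothness is immediate. I expect the genuinely delicate point to be exactly this $\Hom$-vanishing --- equivalently, checking that the reflection is an isomorphism of moduli functors; everything else is formal.
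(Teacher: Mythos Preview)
The paper does not supply its own proof of this statement; it is quoted directly from \cite[Lemma 5.117]{KY}, with the preceding Lemma~\ref{result} flagged as the geometric input from which it follows. So there is no argument in the paper to compare against beyond that attribution. Your deformation-theoretic outline is the standard route and matches how the result is obtained in \cite{KY}: the obstruction sits in the cokernel of $\Ext^1(\E,\E)\to U^*\otimes H^1(\E)$ once $\Ext^2(\E,\E)_0=0$ and $H^2(\E)=0$ are used, and your Serre-duality identification of this cokernel with $(\Hom(\E,\F)/\C\psi)^\cech$ in the case $\dim U<\rk\E$ is correct.

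Two points deserve more care. First, the vanishing $\Hom(\E,\F)=\C\psi$ is the genuine content and your justification is too thin: stability plus minimal degree do force any nonzero $\phi\colon\E\to\F$ to have $\ker\phi$ saturated of rank $n$ with $c_1(\ker\phi).H=0$, but you still have to argue that this forces $\ker\phi=U\otimes\O$ as subsheaves of $\E$ --- two saturated rank-$n$ subsheaves with trivial $c_1$ need not coincide a priori, and your parenthetical does not address this step. Second, your ``cleanest route'' via the universal extension is not correct as stated. The rank of the universal extension $\hat\E$ is $r+h^1(\E)$, which jumps across the Brill--Noether strata of Remark~\ref{generic piece}, so $(U,\E)\mapsto\hat\E$ does not land in a single moduli space $M(\hat v)$, and $\Syst^n(v)$ is not globally a Grassmannian bundle over any smooth moduli space of sheaves. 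The fibrations in the Lemma following Theorem~\ref{main} are Grassmannian bundles only stratum-by-stratum, which is precisely why smoothness requires the infinitesimal argument you began with rather than a global bundle description.
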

Whenever $-(v(\O),v)\geq 0$, denote by $\Syst^n(v)_i$ the preimage of the stratum $M(v)_i$ from $\S\ref{stratification}$ under the forgetful morphism $p:\Syst^n(v)\into M(v)$; clearly $\{\Syst^n(v)\}_{i\geq 0}$ is a locally closed stratification of $\Syst^n(v)$.

For $v=(r,D,a)$, denote $\Syst^n(r,D,a)=\Syst^n(v)$ and
$M(r,D,a)=M(v)$.  For $r\geq n$ there is a map (\emph{cf.}
\cite{KY}) \[q:\Syst^n(r,D,a)\into M(r-n,D,a-n)\] mapping $(\E,U)$ to
the cokernel $\F$ of the evaluation map $U\otimes\O\into\E$, which is
injective by Lemma \ref{result}: 
\[0\into U\otimes \O\into \E\into\F\into 0\]
Again by Lemma \ref{result} $\F$ is stable, and $v(\F)=v(\E)-v(\O^n)=(r-n,D,a-n)$ since $v(\O)=(1,0,1)$.  Further, since $H^1(U\otimes \O)=0$, the stratum $\Syst^n(r,D,a)_i$ maps into $M(r-n,D,a-n)_{i-n}$, assuming $r+a-2n\geq 0$.
\begin{lemma}\cite[Lemma 5.113]{KY}.  For $-(v(\O),v)\geq 0$,\begin{enumerate}
\item The restriction $\Syst^n(v)_i\into M(v)_i$ of the forgetful morphism $p$ is an \'{e}tale-locally trivial fibration with fiber $\Gr(n,i)$.
\item Furthermore, if $r+a\geq 2n$, then the restriction $\Syst^n(r,D,a)_i\into M(r-n,D,a-n)_{i-n}$ of the quotient morphism $q$ is an \'{e}tale-locally trivial fibration with fiber $\Gr(n,n+i-r-a)$.
\end{enumerate}
\end{lemma}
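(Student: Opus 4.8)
The plan is to exhibit each stratum of $\Syst^n$ as a relative Grassmannian bundle over the corresponding stratum downstairs. Étale-locally on the base a universal sheaf exists (as noted in \S\ref{moduli of sheaves}), and in each case the point is to produce a locally free sheaf whose relative Grassmannian \emph{is} the stratum; étale-local triviality is then automatic, and only the rank of that sheaf—equivalently, the fiber over a closed point—must be identified. Local freeness will come from the fact that the strata $M(v)_i$ are defined precisely so that the governing cohomological dimension is constant, which lets me invoke cohomology-and-base-change.

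For (1), the fiber of $p$ over $[\E]\in M(v)_i$ is by definition the set of $n$-dimensional subspaces $U\subset\Hom(\O,\E)=H^0(\E)$, hence $\Gr(n,i)$ since $\dim H^0(\E)=i$ on this stratum. To globalize I would take an étale-local universal family $\F$ on $X\times M(v)_i$ with projection $\pi$; as $\dim H^0(\E)=i$ is constant, $\pi_*\F$ is locally free of rank $i$ and its formation commutes with base change, so $\Syst^n(v)_i$ is canonically the relative Grassmannian $\Gr(n,\pi_*\F)$, étale-locally trivial with fiber $\Gr(n,i)$.

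For (2), note first that the hypothesis $r+a\geq 2n$ is exactly the condition $-(v(\O),(r-n,D,a-n))=\chi(\F)\geq 0$ ensuring the target stratification of $M(r-n,D,a-n)$ is defined. Fix $[\F]\in M(r-n,D,a-n)_{i-n}$; the fiber of $q$ parametrizes, up to isomorphism, extensions $0\into U\otimes\O\into\E\into\F\into 0$ with $\dim U=n$, which are classified by $\Ext^1(\F,U\otimes\O)=\Hom(U^*,\Ext^1(\F,\O))$. A Mukai-pairing computation together with $\Hom(\F,\O)=0$ (valid since $\F$ is either stable of positive slope or pure of dimension one) gives $\dim\Ext^1(\F,\O)=n+i-r-a$. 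Applying $\Hom(-,\O)$ to the sequence and using $\Hom(\E,\O)=\Hom(\F,\O)=0$, the connecting map $U^*=\Hom(U\otimes\O,\O)\into\Ext^1(\F,\O)$—which is exactly the classifying map of the extension—is injective, so each fiber point determines an $n$-plane $V=\mathrm{im}\subset\Ext^1(\F,\O)$; conversely Lemma \ref{result}(2) shows each such $V$ yields a stable $\E$, hence a genuine fiber point. Passing to isomorphism classes amounts to quotienting the injective maps $U^*\into\Ext^1(\F,\O)$ by $\mathrm{GL}(U)$ (the scalar action of $\operatorname{Aut}(\F)=\C^*$ being absorbed), which identifies the fiber with $\Gr(n,\Ext^1(\F,\O))=\Gr(n,n+i-r-a)$. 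Globalizing over an étale-local universal quotient family $\G$, the relative sheaf $\SExt^1_\pi(\G,\O)$ is locally free of rank $n+i-r-a$ on the stratum and $\Syst^n(r,D,a)_i\cong\Gr(n,\SExt^1_\pi(\G,\O))$.

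The step I expect to be most delicate is the étale-local triviality in part (2): one must show that $\SExt^1_\pi(\G,\O)$ is locally free of the stated rank on the stratum and that its formation commutes with base change, so that its fibers really are the groups $\Ext^1(\F,\O)$ and the relative Grassmannian recovers $\Syst^n(r,D,a)_i$ with the correct fibers. This rests on the constancy of $\dim\Ext^1(\F,\O)$ along $M(r-n,D,a-n)_{i-n}$—which in turn follows from the stratification being by $h^0$ together with the vanishing $\Hom(\F,\O)=0$—and on a relative-$\SExt$ version of cohomology-and-base-change. By contrast, the fiber identifications are comparatively formal once Lemma \ref{result}(2) and the vanishings $\Hom(\E,\O)=\Hom(\F,\O)=0$ are in hand.
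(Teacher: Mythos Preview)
Your proposal is correct and follows the same approach as the paper: exhibit each stratum as a relative Grassmannian of a locally free sheaf built from an \'etale-local universal family, then read off the fiber. The paper's own proof is in fact just a one-line sketch (``obvious if $M(v)$ has a universal sheaf\ldots a universal sheaf exists \'etale locally'') deferring to \cite{KY}; your write-up fleshes out exactly the details that sketch elides, in particular the identification of the fiber of $q$ with $\Gr(n,\Ext^1(\F,\O))$ via Lemma \ref{result}(2) and the connecting-map injectivity.
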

\begin{proof}Both parts are obvious if $M(v)$ has a universal sheaf $\F$, in which case $\Syst^n(v)$ is a relative Grassmannian of $\F$.  A universal sheaf exists \'{e}tale locally, and the result follows.  See \cite{KY}.
\end{proof}
The main tool for the computation of the Hodge polynomials of $\Syst^n(r,D,a)$ will be the existence of the resulting diagrams
\[\xymatrix{
&{\Syst^n(r,D,a)_i}\ar[dr]^q\ar[dl]_p&\\
{M(r,D,a)_i}&&{M(r-n,D,a-n)_{i-n}}
}\]
where $p$ is an \'etale-local $\Gr(n,i)$-fibration and $q$ is an \'etale local $\Gr(n,n+i-r-a)$-fibration.

One final property of the stable pair moduli spaces that will be relevant later is the duality, first proven by \cite[Theorem 39]{Markman}
\begin{proposition}\cite[Proposition 5.128]{KY}.\label{dual}  In the
  setup of Theorem \ref{main} there is an isomorphism
\[\Syst^n(r,D,a)\cong\Syst^n(n-r,D,a-r)\]
for all $r\leq n$.
\end{proposition}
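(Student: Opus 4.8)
The plan is to produce the isomorphism from a single derived functor applied to the evaluation complex of a pair, and to use Lemma~\ref{result} to verify that this functor sends stable pairs to stable pairs of the dual numerical type. To a pair $(U,\E)$ with evaluation map $\phi\colon U\otimes\O\into\E$ I associate the two-term complex $I^{\bullet}=\bigl[\,U\otimes\O\xrightarrow{\phi}\E\,\bigr]$ and apply the derived dual $R\SHom(-,\O)$; this derived dual underlies the reflection isomorphism of \cite{Markman}, and since $\O_X$ is a line bundle and every complex here is perfect, biduality makes it an involution up to the canonical identifications $\O_X^{\vee\vee}\cong\O_X$ and $U^{**}\cong U$. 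Because $r\leq n$, Lemma~\ref{result}(1)(b) applies: $\phi$ is not injective, $\ker\phi$ is stable and locally free of rank $n-r$, and $\coker\phi$ is zero-dimensional. These are exactly the inputs needed to compute the cohomology sheaves of $R\SHom(I^{\bullet},\O)$ and, after a shift, to recognize the result as the evaluation complex $U'\otimes\O\into\F$ of a new pair with $\F$ stable; a Grothendieck--Riemann--Roch computation then identifies its Mukai vector with the one on the right-hand side, and the section rank is preserved because the free summand dualizes to $U^{*}\otimes\O$, again of rank $n$.

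For the inverse I would simply apply $R\SHom(-,\O)$ a second time: being involutive it is its own inverse, so what must be checked is only that dualizing a pair of the second numerical type again lands among stable pairs. This is handled by Lemma~\ref{result}, reading the dual object either via part (1) (as a quotient after an injective evaluation) or via part (2) (as an extension $0\into V^{*}\otimes\O\into\E\into\F\into0$ with $\E$ automatically stable), according to the relevant rank inequality between $r$ and $n-r$. Checking that the two passages are mutually inverse then reduces to chasing the exact sequences of Lemma~\ref{result} in both directions together with the biduality quasi-isomorphism $R\SHom(R\SHom(I^{\bullet},\O),\O)\cong I^{\bullet}$.

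To upgrade the bijection on closed points to an isomorphism of schemes, I would run the construction in families. \'Etale-locally on $M(v)$ there is a universal sheaf, hence a universal evaluation complex over $\Syst^{n}(v)$, and $R\SHom(-,\O)$ of a perfect complex commutes with base change; the pointwise analysis above shows that its cohomology sheaves are flat over the base, so the construction defines a morphism of moduli functors which glues because it is canonical. Performing the construction in the opposite direction likewise yields a morphism the other way, and the two are mutually inverse. Alternatively, granting bijectivity on closed points one may invoke the smoothness of both moduli spaces (Theorem~\ref{main}) and check that the differential is an isomorphism, comparing the deformation theories of the two kinds of pairs through the $\Ext$ long exact sequences attached to $I^{\bullet}$ and its dual.

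The step I expect to be the \emph{main obstacle} is the very first one. When $0<r<n$ the map $\phi$ is neither injective nor surjective, so $R\SHom(I^{\bullet},\O)$ a priori carries cohomology in two degrees, and one must show that these organize into a genuine shifted stable-pair complex with no extraneous terms. This is precisely where the minimal-degree hypothesis enters through Lemma~\ref{result}: the local freeness and stability of $\ker\phi$ together with the zero-dimensionality of $\coker\phi$ force the higher $\SExt$-sheaves to be supported in the expected (zero-dimensional) way, so that the dualized complex is quasi-isomorphic to an honest evaluation $U'\otimes\O\into\F$ with $\F$ stable. Establishing this purity cleanly, and checking that it holds uniformly in families so that flatness of the cohomology sheaves is preserved, is the technical heart of the argument.
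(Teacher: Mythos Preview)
Your proposal follows essentially the same approach as the paper: form the two-term evaluation complex $x=[U\otimes\O\to\E]$, apply the derived dual $R\SHom(-,\O_X)$, and read off a new stable pair. The paper only defines the map (deferring the rest to \cite{KY}) and does so by extracting $U^*\otimes\O\to\SHom(x,\O)$ directly from the dualized distinguished triangle rather than by analyzing the cohomology sheaves of the dual complex; this slightly sidesteps what you flag as the ``main obstacle,'' but the two presentations amount to the same construction.
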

\begin{remark}\label{restrict}By this duality, if we're interested in
  $\Syst^n(r,D,k+r)$ for $r\leq n$, we may assume $k\geq 0$; indeed,
  the duality is equivalent to
  $\Syst^n(r,D,a)\cong\Syst^n(n-r,D,a+(n-r))$.  Thus we need only consider moduli spaces involving sheaves of nonnegative Euler characteristic.
\end{remark}
\begin{proof}[Proof of Proposition \ref{dual}]  We will at the very least define the map; see \cite{KY} for a proof of the theorem.  Let $U\otimes\O\into\E$ be a stable pair, and consider $U\otimes\O\into\E$ as a morphism of complexes supported in degree 0 in the derived category $D^b(X)$; let $x\in D^b(X)$ be the cone.  Thus, there is a triangle
\begin{equation}\label{triangle}x\into U\otimes\O\into \E\into x[1]\end{equation}
Alternatively, we can think of $x$ as the 2-term complex $[U\otimes \O\into \E]$ with $\E$ placed in degree 1.  Applying $R\SHom(\,\cdot\,,\O)$ to the triangle \eqref{triangle}, we have a morphism 
\begin{eqn}\label{dualness}U^*\otimes\O\cong\SHom(U\otimes\O,\O)\into \SHom(x,\O)\end{eqn}One can show that $U^*\otimes\O\into\SHom(x,\O)$ is a stable pair and that this defines the isomorphism.  For example, \eqref{dualness} is injective on global sections because, applying $R\Hom(\,\cdot\,,\O)$ to \eqref{triangle}, there is an exact sequence
\begin{equation}\label{othertriangle}0\cong\Hom(\E,\O)\into U^*\into \Hom(x,\O)\into \Ext^1(\E,\O)\into 0\end{equation}
where the triviality of $\Hom(\E,\O)$ follows from the stability of $\E$.
\end{proof}
\begin{remark}\label{stratumrmk}
In fact, by \eqref{othertriangle}, we obtain an isomorphism
\[\Syst^n(r,D,a)_i\cong\Syst^n(n-r,D,a-r)_{i+n-\chi}\]
where $\chi=-(v(\O),v), v=(r,D,a)$.
\end{remark}

\section{Computation of hodge polynomials}\label{sect:3}
This section will be devoted to computing the generating functions of the moduli spaces of stable pairs on K3 surfaces.  The geometric arguments are given here; some useful computations are collected in the subsequent section.
\subsection{Preparations}\label{preparations}
For $X$ a scheme over $\C$, let 
\[\hodge{X}=\sum_{p,q\geq 0}h^{p,q}(X)(-t)^p(-\bar{t})^q\] 
denote the virtual Hodge polynomial (Hodge-Deligne polynomial) of $X$ \cite{deligne}.  Throughout the following, we will set $u=t\bar{t}$; the Hodge polynomial of the Grassmannian $\Gr(k,n)$ of $k$ planes in $n$-space is easily expressed in terms of $u$-integers (see $\S\ref{uint}$): 
\[\hodge{\Gr(k,n)}=\sqnom{n}{k}\]
In particular,
\[\hodge{\P^n}=[n+1]\]
Let $X$ now be a K3 surface.  Recall that for a divisor class $D\in
H^2(X,\Z)$, $D^2=2g-2$ by the adjunction formula, where $g$ is the
arithmetic genus of a divisor in the class $D$; $g$ will be called the
genus of $D$.  For each genus $g\geq 0$ fix a polarized K3 surface
$X_g$ with polarization $H_g$ and a divisor class $D_g$ of minimal
degree and genus $g$, \emph{cf.} Examples \ref{examples}: 
\begin{itemize}
\item $g=0,1$:  $X_g\into \P^1$ is an elliptic K3 with a section.  $\Pic(X_g)=\Z\sigma\oplus\Z f$, where $f$ is the fiber class and $\sigma$ the section class.  For $g=0$ take $H_0=\sigma+3 f$ and $D_0=\sigma$; for $g=1$ take $H_1=\sigma+3f$ and $D_1=f$.
\item $g\geq2$:  $X_g$ has Picard rank $1$ with ample generator $H_g$ of genus $g$; take $D_g=H_g$.
\end{itemize}

Denote by $M(r,D_g,k)$ the moduli space of $H_g$-stable rank $r$ sheaves $\E$ on $X_g$ with $\chone E=D_g$ and $\chtwo{E}.[X_g]=k$---in the notation of $\S\ref{moduli of sheaves}$, this is $M(v)$ for $v(\E)=(r,D_g,k)$.  Define infinite matrices $\Mat=(M(g)_{ij})_{i,j\geq 0}$ and $\Smat{n}=(\Syst^n(g)_{ij})_{i,j\geq 0}$ of Hodge polynomials by 
\[M(g)_{ij}=\begin{cases}\hodge{M\left(\frac{i-j}{2},D_g,\frac{i+j}{2}\right)},& i -j\equiv 0\mod 2\\0,&i-j\equiv 1\mod 2 \end{cases}\]
\[\Syst^n(g)_{ij}=\begin{cases}\hodge{\Syst^n\left(\frac{i-j}{2},D_g,\frac{i+j}{2}\right)},& i -j\equiv 0\mod 2\\0,&i-j\equiv 1\mod 2 \end{cases}\]
Thus, $M(g)_{ij}$ records the Hodge polynomial of the moduli space of sheaves $\E$ with $i=\chi(\E)$ and $j=\chtwo{\E}$.  In the computations below, it is enough to consider $i,j$ nonnegative.

Recall from \S\ref{stratification} that in this case
$M(r,D,a)_i$ is the stratum of $M(r,D,a)$ of sheaves $\E$ with
$h^0(\E)=i$.  By Remark \ref{generic piece} the highest dimensional stratum is $i=r+a=\chi(\E)$; define a matrix $\Gmat=(M^0(g)_{ij})_{i,j\geq 0}$ of the virtual Hodge polynomials of these generic strata:
\[M^0(g)_{ij}=\begin{cases}\hodge{M\left(\frac{i-j}{2},D_g,\frac{i+j}{2}\right)_{i}},& i -j\equiv 0\mod 2\\0,&i-j\equiv 1\mod 2 \end{cases}\]

\subsection{Encoding the Geometry}\label{encoding}
In the following arguments, we will at any one time be considering $X=X_g$ for a fixed $g$, so we drop the $g$ subscripts from the notation.  

For any locally closed stratification of a scheme $Y$, the virtual Hodge polynomial of $Y$ is the sum of the virtual Hodge polynomials of the strata.  In particular,
\begin{eqn}\label{4}\hodge{M(r,D,a)}=\sum_{i=0}^\infty \hodge{M(r,D,a)_{i}}\end{eqn}Of course the terms are zero for $i<\min(0,r+a)$ and also for $i>>0$.  Similarly
\[\hodge{\Syst^n(r,D,a)}=\sum_{i=0}^\infty \hodge{\Syst^n(r,D,a)_{i}}\]
Recall from \S4.5 that there is a diagram for $r\geq n$, $r+a\geq 2n$,
\[\xymatrix{
&{\Syst^n(r,D,a)_i}\ar[dr]^q\ar[dl]_p&\\
{M(r,D,a)_i}&&{M(r-n,D,a-n)_{i-n}}
}\]
which can be rewritten  as
\[\xymatrix{
&\Syst^n(r+n,D,a+n)_{i+n}\ar[dr]^q\ar[dl]_p&\\
M(r+n,D,a+n)_{i+n}&&M(r,D,a)_{i}\\
}\]
for any $i,r,n\geq 0$ and any $a$.  Recall that the fiber of $p$ above $M(r+n,D,a+n)_{i+n}$ is $\Gr(n,i+n)$ and the fiber of $q$ over $M(r,D,a)_i$ is $\Gr(n,i-r-a)$ (or empty unless $n\leq i-r-a$).  When $M(r,D,a)_i$ is nonepmty, we have $i\geq r+a$ by Remark \ref{generic piece}.  Taking $n=i-r-a$,
\[\xymatrix{
&\Syst^{i-r-a}(i-a,D,i-r)_{2i-r-a}\ar[dr]_\cong^q\ar[dl]_p&\\
M(i-a,D,i-r)_{2i-r-a}&&M(r,D,a)_{i}\\
}\]
where $q$ is an isomorphism and $p$ is an \'{e}tale-locally trivial fibration with fiber $\Gr(i-r-a,2i-r-a)$.  This diagram is valid for any $r\geq 0$, any $a$, and $i\geq a+r$.

For any Zariski-locally trivial fibration $Y\into S$ with fiber $F$---i.e., Zariski-locally trivially on $S$, $Y\into S$ is isomorphic to the projection $F\times S\into S$---the Hodge polynomials simply multiply
\[\hodge{Y}=\hodge{F}\hodge{S}\]
The same is not in general true for \'etale-locally trivial fibrations, but it is in this case:
\begin{lemma}Let $Y,S$ be quasiprojective varieties over $\C$, and $\pi:Y\into S$ a projective \'{e}tale-locally trivial fibration with fiber $\Gr(k,n)$.  Then
\[\hodge{Y}=\hodge{\Gr(k,n)}\hodge{S}\]
\end{lemma}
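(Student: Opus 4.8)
The plan is to compute $\hodge{Y}$ through the mixed Hodge structures on the cohomology of the total space via the Leray spectral sequence of $\pi$ (reading the fibration with total space $Y$ and base $S$, as the asserted formula requires), and to show that this spectral sequence not only degenerates but that the relevant local systems are \emph{trivial as variations of Hodge structure}. Since $\pi$ is étale-locally isomorphic to the projection $\Gr(k,n)\times S\into S$, it is smooth, and by hypothesis it is projective; thus Deligne's degeneration theorem for smooth projective morphisms yields $R\pi_*\Q\cong\bigoplus_q R^q\pi_*\Q[-q]$ and degeneration of the Leray spectral sequence at $E_2$. Consequently, as mixed Hodge structures,
\[ H^m(Y)\cong\bigoplus_{p+q=m}H^p(S,R^q\pi_*\Q). \]

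The crux is that each local system $R^q\pi_*\Q$ is trivial. First I would record that $\Gr(k,n)$ has cohomology only in even degrees, spanned by Schubert (equivalently Chern) classes, with $H^{2j}(\Gr(k,n))$ pure of Hodge--Tate type $(j,j)$, i.e.\ a sum of $b_{2j}$ copies of $\Q(-j)$; in particular $\hodge{\Gr(k,n)}=\sum_j b_{2j}(t\bar t)^j=\sqnom{n}{k}$. The monodromy of $\pi_1(S)$ on the fiber cohomology factors through the structure group of the fibration, which, because $\pi$ is étale-locally the relative Grassmannian of a sheaf, reduces to the \emph{connected} group $\mathrm{PGL}_n=\mathrm{Aut}(\Gr(k,n))^\circ$. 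A connected group acts trivially on the discrete cohomology of the fiber, so the monodromy is trivial and $R^q\pi_*\Q$ is the constant local system $H^q(\Gr(k,n))\otimes\Q$. Because the fiber cohomology is pure Tate, this upgrades to an identification of variations of Hodge structure: $R^{2j}\pi_*\Q$ is the constant variation $\Q(-j)^{\oplus b_{2j}}$.

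Granting this, the displayed decomposition reads $H^m(Y)\cong\bigoplus_{2j\le m}H^{m-2j}(S)\otimes\Q(-j)^{\oplus b_{2j}}$ as mixed Hodge structures. Since a Tate twist by $\Q(-j)$ multiplies the virtual Hodge polynomial by $(t\bar t)^j$, taking $\hodge{\cdot}$ gives
\[ \hodge{Y}=\hodge{S}\cdot\sum_{j}b_{2j}(t\bar t)^j=\hodge{S}\,\hodge{\Gr(k,n)}, \]
which is the claim. For $S$ noncompact or singular one runs the identical argument inside Saito's formalism of mixed Hodge modules, where $R\pi_*$ of the constant Hodge module carries the requisite mixed Hodge structures functorially; the decomposition and the Tate-twist bookkeeping are unchanged.

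I expect the main obstacle to be precisely the triviality of $R^q\pi_*\Q$ as a variation of Hodge structure rather than merely fiberwise. The additive splitting of $H^*(Y)$ is automatic from Deligne's theorem, but to conclude that the \emph{Hodge numbers} multiply one must know the monodromy fixes the algebraic Schubert classes; for a nontrivial local system $L$ the polynomial $\hodge{H^p(S,L)}$ is generally not $\mathrm{rk}(L)\cdot\hodge{H^p(S)}$, so the product formula genuinely fails without trivial monodromy. This is exactly what connectedness of $\mathrm{Aut}(\Gr(k,n))^\circ=\mathrm{PGL}_n$ supplies. The one delicate case is $n=2k$, where $\mathrm{Aut}(\Gr(k,n))$ acquires an extra component from the duality automorphism; there one invokes that the fibration arises as a relative Grassmannian, so its structure group lies in the connected $\mathrm{PGL}_n$ and the extra component never appears. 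With trivial monodromy secured, the remaining steps are routine.
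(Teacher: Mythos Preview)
Your argument is correct but takes a heavier route than the paper. The paper argues via the Leray--Hirsch theorem directly: the Chern classes of the relative cotangent bundle $\Omega_{Y/S}$ are global classes on $Y$ whose restrictions to each fiber generate $H^*(\Gr(k,n),\Q)$, so the cup-product map
\[
H^*(\Gr(k,n),\Q)\otimes H_c^*(S,\Q)\longrightarrow H_c^*(Y,\Q)
\]
is a vector-space isomorphism and, since it is a morphism of mixed Hodge structures, an isomorphism of such. This avoids Deligne's decomposition theorem, the monodromy discussion, and Saito's machinery entirely; the only input is the existence of globally defined algebraic classes restricting to a fiberwise basis, which the relative cotangent bundle supplies automatically for \emph{any} \'etale-locally trivial Grassmannian fibration, without needing to know that the structure group lies in $\mathrm{PGL}_n$. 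Your approach, by contrast, proves more: it would work for any smooth projective fibration whose fiber has Tate cohomology and whose structure group is connected, whereas Leray--Hirsch needs the global extensions of fiber classes. The price you pay is invoking Saito's mixed Hodge modules (or at least Arapura's result that the Leray filtration is by sub-MHS) to make the $E_2$ decomposition compatible with Hodge structures, and the separate handling of the $n=2k$ duality component; the paper's Chern-class argument sidesteps both, since the Chern classes of $\Omega_{Y/S}$ are fixed even by the duality automorphism.
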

\begin{proof}  Let $\Omega=\Omega_{Y/S}$ be the relative cotangent bundle, and let $A\subset H_c^*(Y,\Q)$ be the sub-Hodge structure generated by the Chern classes $\mathrm{c}_i(\Omega_{Y/S})$ and their products.  For each fiber $i:\Gr(k,n)\into Y$, $i^*$ clearly restricts to an isomorphism $A\xrightarrow{\cong}H^*(\Gr(k,n),\Q)$ of Hodge structures.  Let $\phi:H^*(\Gr(k,n),\Q)\into A$ be the inverse, and define a morphism of Hodge structures
\[\psi=\phi\smallsmile \pi^*:H^*(\Gr(k,n),\Q)\otimes H_c^*(S,\Q)\into H_c^*(Y,\Q)\]
By the Leray-Hirsch theorem, this is an isomorphism of vector spaces, and therefore of Hodge structures.
\end{proof}
Thus,
\begin{eqnarray*}\hodge{M(r,D,a)_i}&=&\hodge{\Gr(i-r-a,2i-r-a)}\hodge{M(i-a,D,i-r)_{2i-r-a}}\\
&=&\sqnom{2i-r-a}{i-r-a}\hodge{M(i-a,D,i-r)_{2i-r-a}}\\\end{eqnarray*}
After replacing $\ell=r$, $a=k+\ell$, and $i=k+2\ell+s$, this becomes
\begin{equation}\label{gentogen}\hodge{M(\ell,D,k+\ell)_{k+2\ell+s}}=\sqnom{k+2\ell+2s}{s}\hodge{M(\ell+s,D,k+\ell+s)_{k+2\ell+2s}}
\end{equation}
This equation is valid in particular for any $k,\ell,s\geq 0$.  The Hodge polynomial on the right is $M^0(g)_{k+2\ell+2s,k}$.  The strata $M(\ell,D,k+\ell)_{k+2\ell+s}$ are null for $s<0$, so 
\begin{eqnarray*}
M(g)_{k+2\ell,k}&=&\hodge{M(\ell,D,k+\ell)}\\
&=&\sum_{s=0}^\infty \sqnom{k+2\ell+2s}{s}M^0(g)_{k+2\ell+2s,k}\\
&=&\sum_{s=0}^\infty A^0_{k+2\ell,k+2\ell+2s}M^0(g)_{k+2\ell+2s,k}\\
\end{eqnarray*}
where $\Amat{0}=(A^0_{ij})_{i,j\geq0}$ is the matrix from $\S 4.4$.
Thus
\begin{equation}\label{genmatrices}\Mat=\Amat{0}\Gmat\end{equation}
Moreover, since
\[\hodge{\Syst^n(r,D,a)_i}=\hodge{\Gr(n,i)}\hodge{M(r,D,a)_i}\]
We have
\[\hodge{\Syst^n(\ell,D,k+\ell)_{k+2\ell+s}}=\sqnom{k+2\ell+s}{n}\hodge{M(\ell,D,k+\ell)_{k+2\ell+s}}\]
so that
\begin{eqnarray*}
\Syst^n(g)_{k+2\ell,k}&=&\hodge{\Syst^n(\ell,D,k+\ell)}\\
&=&\sum_{s=0}^\infty \sqnom{k+2\ell+s}{n}\sqnom{k+2\ell+2s}{s}M^0(g)_{k+2\ell+2s,k}\\
&=&\sum_{s=0}^\infty A^n_{k+2\ell,k+2\ell+2s}M^0(g)_{k+2\ell+2s,k}\\
\end{eqnarray*}
where $\Amat{n}=(A^n_{ij})_{i,j\geq0}$ is the more general $A$-matrix from $\S 4.4$.  Thus, 
\[\Smat{n}=\Amat{n}\Gmat\]
and setting $\Pmat{n}=\Amat{n}\Amat{0}^{-1}$,
\begin{proposition}\label{product prop}\[\Smat{n}=\Pmat{n}\Mat\]
\end{proposition}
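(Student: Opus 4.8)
The plan is to read the proposition off the two matrix identities already assembled above, namely $\Mat=\Amat{0}\Gmat$ from \eqref{genmatrices} and the identity $\Smat{n}=\Amat{n}\Gmat$ derived just before the statement. Granting that $\Amat{0}$ is invertible, the first identity gives $\Gmat=\Amat{0}^{-1}\Mat$; substituting this into the second yields
\[\Smat{n}=\Amat{n}\Gmat=\Amat{n}\Amat{0}^{-1}\Mat=\Pmat{n}\Mat,\]
the last equality being the definition of $\Pmat{n}$. So the entire content of the proposition collapses to the invertibility of $\Amat{0}$ together with the legitimacy of these manipulations for infinite matrices.

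First I would record the triangular shape of $\Amat{0}$. The coefficient appearing in \eqref{gentogen} gives $A^0_{i,j}=\sqnom{j}{(j-i)/2}$ whenever $j\geq i$ and $j\equiv i\pmod 2$, and $A^0_{i,j}=0$ otherwise; in particular the diagonal entry is $A^0_{i,i}=\sqnom{i}{0}=1$. Thus $\Amat{0}$ is upper-triangular with every diagonal entry equal to $1$. Writing $\Amat{0}=I+N$ with $N$ strictly upper-triangular, each power $N^m$ is supported on pairs $(i,j)$ with $j\geq i+2m$, so for any fixed entry only finitely many $m$ contribute to $\sum_{m\geq 0}(-N)^m$. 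This realizes $\Amat{0}^{-1}$ as a genuine upper-triangular matrix, computed entrywise by a finite sum, and the same triangular support shows $\Pmat{n}=\Amat{n}\Amat{0}^{-1}$ is well-defined entrywise.

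The remaining work is pure linear algebra, so I expect no real difficulty; the one point I would treat with care—and regard as the main obstacle—is the bookkeeping forced by the matrices being infinite. Here the saving fact is that the strata $M(\ell,D,k+\ell)_{k+2\ell+s}$ vanish for $s<0$ and for $s\gg 0$, and that $M(r,D,a)$ itself is empty once $ra$ is large (its dimension $2(g-ra)$ becomes negative). Consequently each relevant row of $\Amat{0}$ and each relevant column of $\Gmat$ and of $\Mat$ has finite support, so every sum occurring in $\Amat{0}\Amat{0}^{-1}$, in $\Amat{n}\Amat{0}^{-1}$, and in $\Pmat{n}\Mat$ is finite and the associativity used in the substitution above holds entrywise rather than merely formally.
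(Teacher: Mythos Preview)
Your proposal is correct and is exactly the argument the paper gives: the proposition is stated immediately after the two identities $\Mat=\Amat{0}\Gmat$ and $\Smat{n}=\Amat{n}\Gmat$, and the paper regards it as an immediate consequence once $\Pmat{n}=\Amat{n}\Amat{0}^{-1}$ is defined, noting in \S\ref{matrix} that $\Amat{0}$ is upper triangular with ones on the diagonal and hence invertible. Your additional remarks on finiteness of the matrix products are a welcome elaboration of points the paper leaves implicit.
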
The entries of $\Pmat{n}$ are computed in \S\ref{product}.

\subsection{Explicit Computations}\label{explicit}By Theorem \ref{generic}, $M(r,D,a)$ is deformation equivalent to the Hilbert scheme of points $X^{[g-ra]}$, so
\[\hodge{M(r,D,a)}=\hodge{X^{[g-ra]}}\] 
The generating function for the Hodge polynomials of the $X^{[n]}$ is, by G\"{o}ttsche's formula \cite{gottsche},
\begin{align*}
\sum_{n\geq 0}&\hodge{X^{[n]}}q^n=\prod_{n\geq1}\prod_{i,j=0}^2(1-(-1)^{i+j}t^{i-1}\bar{t}^{j-1}(uq)^n)^{-(-1)^{i+j}h^{i,j}(X)}\\
&=\prod_{n\geq 1}\frac{1}{(1-u^{-1}(uq)^n)(1-t\bar{t}^{-1}(uq)^n)(1-(uq)^n)^{20}(1-\bar{t}t^{-1}(uq)^n)(1-u(uq)^n)}\\
\end{align*}
More concisely,
\begin{align}
\sum_{n\geq 0}&\hodge{X^{[n]}}u^{-n}q^{n}=\notag\\
&=\prod_{n\geq 1}\frac{1}{(1-u^{-1}q^n)(1-t^2u^{-1}q^n)(1-q^n)^{20}(1-t^{-2}uq^n)(1-uq^n)}\label{hilbert}
\end{align}

Denote by $c(n)=\hodge{X^{[n]}}$.
We are interested in the generating function (we suppress the $u$-dependence from the notation)
\begin{align}F^r_n(q,y):=&\sum_{g\geq 0}\sum_{k\in\mathbb Z}\hodge{\Syst^n(r,D_g,k+r)}u^{-g}y^kq^{g-1}\notag\\
=&\sum_{g\geq 0}\sum_{k\geq 0}\hodge{\Syst^n(r,D_g,k+r)}u^{-g}y^{k}q^{g-1}\notag\\
&+\sum_{g\geq 0}\sum_{k< 0}\hodge{\Syst^n(r,D_g,k+r)}u^{-g}y^{k}q^{g-1}\label{last line}
\end{align}
The exponent $g-1$ of $q$ (instead of simply $g$) is customary.  For
$r\leq n$, we know by Proposition \ref{dual} that
\[\Syst^n(r,D,r-k)\cong\Syst^n(n-r,D,n-r+k)\]
and therefore we can write \eqref{last line} as
\[F^r_n(q,y)=\sum_{g\geq 0}\sum_{k\geq 0}\Syst^n(g)_{k+2r,k}u^{-g}y^{k}q^{g-1}
+\sum_{g\geq 0}\sum_{k> 0}\Syst^n(g)_{k+2(n-r),k}u^{-g}y^{-k}q^{g-1}\]

We have
\begin{eqnarray*}\Syst^{n}(g)_{k+2r,k}&=&\sum_{\ell\geq r}P^n_{k+2r,k+2\ell}M(g)_{k+2\ell,k}\\
&=&\sum_{\ell\geq r}P^n_{k+2r,k+2\ell}c\left(g-\ell^2-\ell k\right)\\
\Syst^{n}(g)_{k+2(n-r),k}&=&\sum_{\ell\geq r+n}P^n_{k-2r+2n,k+2\ell}M(g)_{k+2\ell,k}\\
&=&\sum_{\ell\geq r+n}P^n_{k-2r+2n,k+2\ell}c\left(g-\ell^2-\ell k\right)
\end{eqnarray*}
Therefore
\begin{align*}
F^r_n(q,y)=&\sum_{g\geq 0}\sum_{k\in\Z}u^{-g}q^{g-1}y^k\Syst^n(g)_{k+2r,k}\\
=&\sum_{g\geq 0}\sum_{k\geq 0}u^{-g}q^{g-1}y^k\sum_{\ell\geq r} P^n_{k+2r,k+2\ell} c(g-\ell^2-\ell k)\\
&+\sum_{g\geq 0}\sum_{k\geq 1}u^{-g}q^{g-1}y^{-k}\sum_{\ell\geq n-r} P^n_{k-2r+2n,k+2\ell} c(g-\ell^2-\ell k)\\
\end{align*}
and thus
\begin{align}
F^r_n(q,y)&=S(q)\sum_{k\geq 0}\sum_{\ell\geq r}y^ku^{-\ell^2-\ell k}q^{\ell k+\ell^2} P^n_{k+2r,k+2\ell}\label{labelly}\\
&+S(q)\sum_{k\geq 1}\sum_{\ell\geq n-r}y^{-k}u^{-\ell^2-\ell k}q^{\ell k+\ell^2} P^n_{k-2r+2n,k+2\ell}\label{labelly2}\end{align}

where 
\begin{eqnarray*} S(q)&=&\sum_{g\geq 0}c(g)u^{-g}q^{g-1}\\
\end{eqnarray*}
is the generating function of the Hodge polynomials of the Hilbert schemes of points on a K3 surface (again with the customary shift in the $q$ power).  We also know by Lemma \ref{useful} that
\begin{eqnarray*}
P^n_{k+2r,k+2\ell}&=&u^{r(n-r)}u^{\ell^2+\ell k-n\ell-kr}\frac{[k+2\ell]}{[n]}\sqnom{n+\ell-r-1}{n-1}\sqnom{k+\ell+r-1}{n-1}\\
P^n_{k-2r+2n,k+2\ell}&=&u^{r(n-r)}u^{\ell^2+\ell k-n\ell-k(n-r)}\frac{[k+2\ell]}{[n]}\sqnom{\ell+r-1}{n-1}\sqnom{k+\ell-r+n-1}{n-1}
\end{eqnarray*}
Note that the sums in \eqref{labelly}, \eqref{labelly2} make sense for all $\ell\geq 0$ since the terms are zero whenever $\ell< r$ in the first and $\ell<n-r$ in the second sum. 

Write $p=\ell+k$ to get 
\begin{align*}
\sum_{k\geq 0}\sum_{\ell\geq 0}&y^ku^{-\ell^2-\ell k}q^{\ell k+\ell^2} P^n_{k+2r,k+2\ell}\notag\\
&=\frac{u^{r(n-r)}}{[n]}\sum_{\ell\geq 0}\sum_{p\geq \ell}u^{-n\ell-(p-\ell)r}[p+\ell]\sqnom{n+\ell-r-1}{n-1}\sqnom{p+r-1}{n-1}y^{p-\ell}q^{p\ell}
\end{align*}
and
\begin{align*}
\sum_{k>0}\sum_{\ell\geq 0}&y^{-k}u^{-\ell^2-\ell k}q^{\ell k+\ell^2} P^n_{k-2r+2n,k+2\ell}\\
&=\frac{u^{r(n-r)}}{[n]}\sum_{p>\ell}\sum_{\ell\geq 0}u^{-np+(p-\ell)r}[p+\ell]\sqnom{\ell+r-1}{n-1}\sqnom{n-r+p-1}{n-1}y^{\ell-p}q^{p\ell}\\
&\stackrel{\ell\leftrightarrow p}{=}\frac{u^{r(n-r)}}{[n]}\sum_{p\geq 0}\sum_{\ell>p}u^{-n\ell-(p-\ell)r}[p+\ell]\sqnom{n+\ell-r-1}{n-1}\sqnom{p+r-1}{n-1}y^{p-\ell}q^{p\ell}
\end{align*}
Where the second line is obtained by setting $k=p-\ell$, and the
third by switching $p$ and $\ell$.  Noting that
$\sqnom{n+\ell-r-1}{n-1}$ and $\sqnom{p+r-1}{n-1}$ vanish for
$\ell<r$ and $p<n-r$, respectively, the result is
\setcounter{theoremletter}{0}
\begin{theorem}\label{thm}For $r\leq n$,
\begin{eqnarray*}
\frac{F^r_n(q,y)}{S(q)}&=&\frac{u^{r(n-r)}}{[n]}\sum_{\substack{p\geq n-r\\\ell\geq r}}u^{-n\ell-(p-\ell)r}[p+\ell]\sqnom{n+\ell-r-1}{n-1}\sqnom{p+r-1}{n-1}y^{p-\ell}q^{p\ell}\\
\end{eqnarray*}
\end{theorem}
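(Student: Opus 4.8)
The plan is to derive the closed form by combining the matrix identity $\Smat{n}=\Pmat{n}\Mat$ of Proposition \ref{product prop}, the duality of Proposition \ref{dual}, and the explicit entries of $\Pmat{n}$ recorded in \eqref{useful}. The starting point is the definition \eqref{gen}, whose inner sum ranges over all $k\in\Z$; the first task is to fold the tail $k<0$ back onto nonnegative indices. Since $r\leq n$, the isomorphism $\Syst^n(r,D,r-k)\cong\Syst^n(n-r,D,n-r+k)$ coming from Proposition \ref{dual} rewrites each negative term as the Hodge polynomial of a moduli space of stable pairs of the \emph{dual} section rank $n-r$, so that
\[F^r_n(q,y)=\sum_{g\geq 0}\sum_{k\geq 0}\Syst^n(g)_{k+2r,k}\,u^{-g}y^{k}q^{g-1}+\sum_{g\geq 0}\sum_{k> 0}\Syst^n(g)_{k+2(n-r),k}\,u^{-g}y^{-k}q^{g-1}.\]
Both contributions are now read off two diagonals of the single matrix $\Smat{n}$.

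Next I would substitute $\Smat{n}=\Pmat{n}\Mat$ to expand each entry $\Syst^n(g)_{\cdot\,,k}$ as a finite $\ell$-sum of the sheaf-moduli Hodge polynomials $M(g)_{k+2\ell,k}$ weighted by the coefficients $P^n_{\cdot\,,k+2\ell}$. By Theorem \ref{generic} the space $M(\ell,D,k+\ell)$ is deformation equivalent to $X^{[g-\ell^2-\ell k]}$, so $M(g)_{k+2\ell,k}=c(g-\ell^2-\ell k)$ with $c(m)=\hodge{X^{[m]}}$; consequently the $g$-summation collapses to the G\"ottsche generating function $S(q)=\sum_{g}c(g)u^{-g}q^{g-1}$ and factors out of both terms. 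Inserting the two closed forms for $P^n_{k+2r,k+2\ell}$ and $P^n_{k-2r+2n,k+2\ell}$ from \eqref{useful} then presents $F^r_n(q,y)/S(q)$ as a sum of two explicit double series in the indices $k\geq 0$ (resp.\ $k>0$) and $\ell\geq 0$.

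The decisive step is to merge these two double series into the single expression of the statement. Here I would pass to the variable $p=\ell+k$, under which the primary series ($k\geq 0$) runs over $p\geq\ell$ and the dual series ($k>0$) over $p>\ell$. The point is that, once the explicit $\Pmat{n}$-entries are inserted and one relabels the dummy variables by $p\leftrightarrow\ell$, the dual series becomes a sum over $\ell>p$ whose summand is \emph{identical} to that of the primary series: the factor $[p+\ell]$ and the monomial $u^{-n\ell-(p-\ell)r}q^{p\ell}y^{p-\ell}$ agree, while the relabeling carries the dual binomial product $\sqnom{\ell+r-1}{n-1}\sqnom{n-r+p-1}{n-1}$ onto the primary one $\sqnom{p+r-1}{n-1}\sqnom{n+\ell-r-1}{n-1}$. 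The complementary ranges $p\geq\ell$ and $\ell>p$ then glue into the full quadrant $p,\ell\geq 0$, and the vanishing of $\sqnom{n+\ell-r-1}{n-1}$ for $\ell<r$ together with that of $\sqnom{p+r-1}{n-1}$ for $p<n-r$ trims the summation to $\ell\geq r$, $p\geq n-r$. I expect the main obstacle to be precisely this verification that the two \eqref{useful} entries are interchanged by $p\leftrightarrow\ell$; it is the one genuinely nontrivial bookkeeping step, and the explicit closed forms of \eqref{useful} are exactly what make it transparent.
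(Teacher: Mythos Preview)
Your proposal is correct and follows essentially the same route as the paper: split via the duality of Proposition~\ref{dual}, apply $\Smat{n}=\Pmat{n}\Mat$ together with $M(g)_{k+2\ell,k}=c(g-\ell^2-\ell k)$ to factor out $S(q)$, insert the explicit entries from Lemma~\ref{useful}, set $p=\ell+k$, swap $p\leftrightarrow\ell$ in the dual piece, and glue the ranges using the vanishing of the two $u$-binomials. The paper carries out exactly these steps in the same order.
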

\begin{remark}One is able to produce a similar formula for $r>n$ by
  once again using the duality in Prospition \ref{dual}, but it requires defining $M(r,D,a)$ for negative $r$.  Such moduli spaces naturally parametrize objects in the derived category $D^b(X)$ Verdier dual to stable sheaves.
\end{remark}
Note that the only dependence on $t,\bar{t}$ that doesn't factor through $u=t\bar{t}$ is from the term $S(q)$.  In particular for $r=0,n=1$
\begin{align*}
\frac{F^0_1(q,y)}{S(q)}&=\sum_{\ell\geq 0}\sum_{p\geq 1}u^{-\ell}[p+\ell]y^{p-\ell}q^{p\ell}\\
&=\frac{1}{u-1}\sum_{\ell\geq 0}\sum_{p\geq 1}(u^p-u^{-\ell})y^{p-\ell}q^{p\ell}\\
&=\frac{1}{u-1}\Psi(u,y;q)\\
\end{align*}
where $\Psi(u,y;q)$ is the function from $\S 4.3$.  By the computations in $\S \ref{theta}$, we recover 
\begin{corollary}\cite[Theorem 5.158]{KY}.\label{rankone}
\[\frac{F^0_1(q,y)}{S(q)}=\frac{-1}{(1-y)(1-u^{-1}y^{-1})}\prod_{n\geq 1}\frac{(1-q^n)^2(1-uq^n)(1-u^{-1}q^n)}{(1-yq^n)(1-y^{-1}q^n)(1-uyq^n)(1-u^{-1}y^{-1}q^n)}\]
\end{corollary}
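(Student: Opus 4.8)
The starting point is the specialization already made: setting $r=0$, $n=1$ in Theorem~\ref{thm} collapses the double sum to $\frac{F_1^0(q,y)}{S(q)}=\frac{1}{u-1}\Psi(u,y;q)$, so it remains only to identify the single Appell--Lerch-type series $\Psi(u,y;q)=\sum_{\ell\geq 0}\sum_{p\geq 1}(u^p-u^{-\ell})y^{p-\ell}q^{p\ell}$ with the stated infinite product. The plan is to recognize the product as a ratio of Jacobi theta functions and then to prove the resulting theta identity. Writing $\Theta(x)=\prod_{n\geq 1}(1-q^n)(1-xq^{n-1})(1-x^{-1}q^n)$ for the Jacobi triple product, I would first absorb the prefactors $(1-y)$ and $(1-u^{-1}y^{-1})$ as the $n=0$ factors of the theta products in the denominator, and likewise read the numerator $\prod_{n\geq 1}(1-q^n)^2(1-uq^n)(1-u^{-1}q^n)$ as $\prod_{n\geq1}(1-q^n)\,\Theta(u)/(1-u)$. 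A short computation then rewrites the right-hand side as $-\prod_{n\geq1}(1-q^n)^3\,\Theta(u)\big/\big[(1-u)\,\Theta(y)\,\Theta(u^{-1}y^{-1})\big]$, and since $\tfrac{1}{u-1}=\tfrac{-1}{1-u}$ the corollary becomes equivalent to the clean identity
\[\Psi(u,y;q)=\frac{\prod_{n\geq 1}(1-q^n)^3\,\Theta(u)}{\Theta(y)\,\Theta(u^{-1}y^{-1})}.\]

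To prove this I would first resum $\Psi$ over $p$, performing the two geometric series, to obtain a difference of Lambert-type series,
\[\Psi(u,y;q)=\sum_{\ell\geq 0}\frac{u\,y^{1-\ell}q^{\ell}}{1-uyq^{\ell}}-\sum_{\ell\geq 0}\frac{u^{-\ell}y^{1-\ell}q^{\ell}}{1-yq^{\ell}}.\]
This exhibits $\Psi$, for $|q|<1$, as a meromorphic function of $y$ with simple poles at the points $y=u^{-1}q^{-\ell}$ and $y=q^{-\ell}$, $\ell\geq 0$ --- a subset of the zero loci $y\in u^{-1}q^{\Z}$ and $y\in q^{\Z}$ of $\Theta(u^{-1}y^{-1})$ and $\Theta(y)$. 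I would then invoke the standard uniqueness principle for meromorphic Jacobi forms: both sides transform the same way under $y\mapsto qy$ (the functional equation of the series coming from shifting the summation index, that of the product from $\Theta(qy)=-y^{-1}\Theta(y)$), so once one checks that their residues agree at the poles, the difference is a holomorphic elliptic function and hence constant; the constant is then fixed to be $0$ by comparing principal parts at $y=1$, or equivalently the $q^{0}$-coefficients. Equivalently, the whole identity is a specialization of Ramanujan's ${}_1\psi_1$ summation (a Kronecker-type theta quotient), which could be quoted directly once $\Psi$ is put in the bilateral form above.

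The main obstacle is the bookkeeping in the residue comparison and, above all, fixing the overall constant and sign: one must carry the $u$- and $y$-dependent prefactors accurately through both the Jacobi-triple-product rewriting and the resummation, and confirm that every residue of $\Psi-\text{RHS}$ cancels. A subtlety worth flagging is that the one-sided Lambert expansions display only the poles at nonpositive powers of $q$, while the theta quotient has poles at all $y\in q^{\Z}$ and $y\in u^{-1}q^{\Z}$; the remaining poles are produced by the quasi-periodicity (analytic continuation), so the functional-equation step is not merely cosmetic but is what makes the two pole patterns match. That the matched numerator $u^{p}-u^{-\ell}$ yields exactly the paired Lambert series --- with no leftover non-elliptic (``mock'') term --- is precisely the structural reason the identity is a genuine theta quotient, and verifying this cancellation is the computational heart of the argument.
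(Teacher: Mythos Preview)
Your proposal is correct, and in fact your secondary suggestion---quote a Kronecker/${}_1\psi_1$-type identity once $\Psi$ is put in bilateral Lambert form---is exactly what the paper does: it cites Hickerson's result $\sum_{p\in\Z}a^p/(1-q^pb)=(q,q)_\infty^3\Theta(ab)/\Theta(a)\Theta(b)$ (\cite[Theorem~1.4]{mock}) and then manipulates $\Psi(x,y;q)$ algebraically into $\Phi(xy,y^{-1};q)$ on a suitable region, showing the identity holds as formal power series.

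Your primary route---checking quasi-periodicity in $y$ and matching residues to pin down the theta quotient---is genuinely different and more self-contained: it explains \emph{why} the product form is forced rather than pulling it from the literature. The cost, which you correctly flag, is that the one-sided Lambert sums do not transform cleanly under $y\mapsto qy$ (shifting a half-infinite index produces boundary terms), so the functional-equation step really amounts to first analytically continuing to the bilateral form and then running the usual elliptic-function argument. Once you do that you have essentially reproved the cited Kronecker identity, so the two approaches converge; the paper simply skips ahead by citing it.
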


For future reference, set
\begin{align}\label{Phi def}\Phi(u,y;q)=\prod_{n\geq 1}\frac{(1-q^n)^2(1-uq^n)(1-u^{-1}q^n)}{(1-yq^n)(1-y^{-1}q^n)(1-uyq^n)(1-u^{-1}y^{-1}q^n)}
\end{align}
Note directly from the formula in Theorem \ref{thm} that the duality in Proposition \ref{dual} manifests itself in the following duality of the generating function $F^r_n(q,y)$:
\begin{corollary}\label{dual2}
\[F^r_n(q,y)=F^{n-r}_n(q,y^{-1})\]
\end{corollary}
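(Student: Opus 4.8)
The plan is to deduce the corollary directly from the explicit closed form for $F^r_n(q,y)/S(q)$ established in Theorem \ref{thm}, rather than rederiving it geometrically from the duality of Proposition \ref{dual} (the formula of Theorem \ref{thm} was itself obtained using that duality, so the symmetry ought to be visible at the level of the final expression). Since $S(q)$ depends on neither $r$ nor $y$, it suffices to show that the double sum in Theorem \ref{thm} is invariant under the simultaneous substitution $r\mapsto n-r$ and $y\mapsto y^{-1}$.

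First I would carry out the substitution $r\mapsto n-r$, $y\mapsto y^{-1}$ in the formula and record what each factor becomes. The prefactor $u^{r(n-r)}/[n]$ is manifestly invariant, as $(n-r)\bigl(n-(n-r)\bigr)=r(n-r)$. The summation constraints turn into $p\geq r$ and $\ell\geq n-r$; the square binomial coefficients become $\sqnom{\ell+r-1}{n-1}$ and $\sqnom{p+n-r-1}{n-1}$; the monomial $y^{p-\ell}$ becomes $y^{\ell-p}$; and the factors $[p+\ell]$ and $q^{p\ell}$ are unchanged. I would then reindex by interchanging the dummy variables $p\leftrightarrow\ell$. This restores the original constraints $p\geq n-r$, $\ell\geq r$, brings the two binomial coefficients back to $\sqnom{p+r-1}{n-1}$ and $\sqnom{n+\ell-r-1}{n-1}$, and recovers $y^{p-\ell}$ and $q^{p\ell}$ (the latter because the exponent $p\ell$ is symmetric in $p,\ell$). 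Everything thus matches the original summand except for the power of $u$, which is the one factor whose invariance is not evident by inspection.

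The main (and only) obstacle is therefore the bookkeeping for the exponent of $u$. After the substitution and the relabeling it reads $-np-(\ell-p)(n-r)$, and I would expand this to $-\ell n+\ell r-pr$ and compare it with the original exponent $-n\ell-(p-\ell)r=-n\ell-pr+\ell r$; these agree, which is the single identity the proof hinges on. Once this one-line computation is confirmed, the transformed sum coincides with the original term by term, yielding $F^r_n(q,y)=F^{n-r}_n(q,y^{-1})$ as claimed. I expect no difficulty beyond keeping track of signs in this exponent, since all remaining factors are symmetric under the combined operation by the observations above.
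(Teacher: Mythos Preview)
Your proposal is correct and follows exactly the approach indicated in the paper: the text simply asserts that the duality is visible ``directly from the formula in Theorem~\ref{thm},'' and you have supplied precisely the details of that verification (substitute $r\mapsto n-r$, $y\mapsto y^{-1}$, then relabel $p\leftrightarrow\ell$, checking the $u$-exponent by hand). There is nothing to add.
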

\begin{remark}\label{BNcompute}
The same method may be employed to compute the Hodge polynomials of the Brill-Noether strata $M(r,D_g,r+k)_i$ of each moduli space.
\end{remark}

\subsection{Relation to $r=0,n=1$}\label{subsect:KY}
The form of the higher generating functions is strongly determined by the Kawai-Yoshioka ($r=0,n=1$) function.  Define Laurent polynomials $C^r_n(i,j)$ in $u$ for $r\geq 0,n\geq 1$, $1\leq i\leq n$ and $0\leq j\leq n-i$ by $C_n^r(n,0)=1$ and 
\[C^r_{n+1}(i,j)=C_n^r(i-1,j)+C^r_n(i+1,j-1)-u^{r-n}C_n^r(i,j-1)-u^{n-r}C_n^r(i,j)\]

\begin{lemma}The term $\displaystyle u^{-n\ell-(p-\ell)r}[p+\ell]\sqnom{n+\ell-r-1}{n-1}\sqnom{p+r-1}{n-1}$ is equal to
\[\frac{(u-1)^{1-2n}}{[n-1]!^2}\sum_{i=1}^n\sum_{j=0}^{n-i}C^r_n(i,j)(u^{ip}-u^{-i\ell})u^{j(p-\ell)}\]
\end{lemma}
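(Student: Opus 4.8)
The claim identifies the summand
\[
T^r_n(p,\ell):=u^{-n\ell-(p-\ell)r}[p+\ell]\sqnom{n+\ell-r-1}{n-1}\sqnom{p+r-1}{n-1}
\]
appearing in Theorem~\ref{thm} with an explicit double sum over the coefficients $C^r_n(i,j)$, which are themselves defined by the recursion in $n$. Everything here is a Laurent polynomial in $u$, so the plan is to prove the identity by induction on $n$, using the recursion for $C^r_n(i,j)$ as the engine and the $u$-integer/$u$-binomial identities from Section~\ref{uint} (and the matrix entries computed in \eqref{useful}) to match the two sides.

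\textbf{Base case.} First I would verify the identity for $n=1$. Here $C^r_1(1,0)=1$ is the only coefficient (the range $1\le i\le 1$, $0\le j\le 1-i$ forces $i=1,j=0$), and the right-hand side collapses to $\frac{(u-1)^{-1}}{[0]!^2}(u^{p}-u^{-\ell})$. Since $[n-1]!=[0]!=1$ and $\sqnom{\ell-r}{0}=\sqnom{p+r-1}{0}=1$, the left-hand side is $u^{-\ell-(p-\ell)r}[p+\ell]$. Using $[m]=\frac{u^m-1}{u-1}$ one checks directly that $u^{-\ell}[p+\ell]=\frac{u^{p}-u^{-\ell}}{u-1}$, which matches after accounting for the $u^{-(p-\ell)r}$ factor; I expect the two to agree once the $r$-dependent prefactor is tracked correctly through both expressions.

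\textbf{Inductive step.} Assuming the identity for $n$, I would prove it for $n+1$. The strategy is to start from the right-hand side for $n+1$, substitute the recursion
\[
C^r_{n+1}(i,j)=C_n^r(i-1,j)+C^r_n(i+1,j-1)-u^{r-n}C_n^r(i,j-1)-u^{n-r}C_n^r(i,j),
\]
and reindex each of the four resulting sums so that the inductive hypothesis applies to each as an instance of $T^r_n$ (with shifted arguments). Concretely, each term $C^r_n(i',j')$ reconstitutes—via the hypothesis—a shifted copy of the left-hand side $T^r_n$, and the four shifts should correspond to the four ways $n\mapsto n+1$ affects the two $u$-binomial coefficients and the powers of $u$. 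The bulk of the work is a bookkeeping exercise: I would express each of the four inductively-supplied $T^r_n$ expressions, collect the common factor $(u-1)^{1-2(n+1)}/[n]!^2$, and show the remaining combination equals $T^r_{n+1}(p,\ell)$ on the nose. The key algebraic inputs will be the Pascal-type recursions for the $u$-binomial coefficients, namely
\[
\sqnom{m}{k}=\sqnom{m-1}{k-1}+u^{k}\sqnom{m-1}{k}=u^{m-k}\sqnom{m-1}{k-1}+\sqnom{m-1}{k},
\]
together with $[n]!=[n][n-1]!$ to reconcile the factorial prefactors between levels $n$ and $n+1$.

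\textbf{The main obstacle.} The hard part will be the reindexing in the inductive step: the four terms of the $C^r_{n+1}$ recursion each carry different shifts in the pair $(i,j)$ and different powers of $u$, and these must be matched against the ways the two $u$-binomials $\sqnom{n+\ell-r-1}{n-1}$ and $\sqnom{p+r-1}{n-1}$ change when $n$ increases by one. Getting the boundary terms of the $i$- and $j$-summations to cancel correctly—so that the telescoping of the $(u^{ip}-u^{-i\ell})$ factors combines into the single $[p+\ell]$ factor times the product of the larger $u$-binomials—is where the delicate matching lies, and I would expect to need the specific form of \eqref{useful} to pin down the exponent $-n\ell-(p-\ell)r$ and its $n+1$ counterpart. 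The choice to phrase the recursion in $C^r_n$ precisely so that these four shifts align is what makes the induction close, so verifying that alignment is the crux of the argument.
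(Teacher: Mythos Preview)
Your overall plan (induction on $n$, with the $C^r_n$ recursion driving the step) matches the paper's, and your base case is fine.  The inductive step, however, has a genuine gap.  You claim that after substituting the recursion for $C^r_{n+1}$ and reindexing, each of the four resulting sums ``reconstitutes---via the hypothesis---a shifted copy of the left-hand side $T^r_n$.''  This is true for the two sums coming from the $j\to j-1$ shifts (they are scalar multiples $-u^{n-r}$ and $-u^{r-n}u^{p-\ell}$ of the hypothesis sum), but it fails for the two sums coming from the $i\to i\pm 1$ shifts: after reindexing, those sums have summands $(u^{(i+1)p}-u^{-(i+1)\ell})u^{j(p-\ell)}$ and $(u^{(i-1)p}-u^{-(i-1)\ell})u^{(j+1)(p-\ell)}$, and neither is a ($i,j$-independent) scalar times $(u^{ip}-u^{-i\ell})u^{j(p-\ell)}$.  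So the hypothesis does not apply to them separately, and Pascal-type $u$-binomial recursions will not manufacture such a factorization.

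The paper runs the induction in the opposite direction and exploits the \emph{multiplicative} structure of the left-hand side instead.  One writes
\[
T^r_{n+1}(p,\ell)=\frac{u^{-\ell}[n+\ell-r][p+r-n]}{[n]^2}\,T^r_n(p,\ell)
=\frac{(u-1)^{-2}}{[n]^2}\bigl(u^{p}-u^{p+r-n-\ell}-u^{n-r}+u^{-\ell}\bigr)\,T^r_n(p,\ell),
\]
using $\sqnom{n+\ell-r}{n}=\tfrac{[n+\ell-r]}{[n]}\sqnom{n+\ell-r-1}{n-1}$ and $\sqnom{p+r-1}{n}=\tfrac{[p+r-n]}{[n]}\sqnom{p+r-1}{n-1}$ rather than Pascal.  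Then one multiplies this four-term factor into the hypothesis sum and uses the elementary but crucial identity
\[
(u^{p}+u^{-\ell})(u^{ip}-u^{-i\ell})u^{j(p-\ell)}
=(u^{(i+1)p}-u^{-(i+1)\ell})u^{j(p-\ell)}+(u^{(i-1)p}-u^{-(i-1)\ell})u^{(j+1)(p-\ell)},
\]
which is precisely what lets the $i\pm 1$ pieces recombine.  This identity is the missing ingredient in your plan: it shows that the two problematic sums \emph{together} equal $(u^p+u^{-\ell})$ times the hypothesis sum, after which everything collapses to the $C^r_{n+1}$ recursion.  Your reference to \eqref{useful} is unnecessary here; only the simple ratio identities for $u$-binomials and the displayed identity above are used.
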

\begin{proof}
Clearly the claim is true for $n=1$.  Note that
\begin{align*}
\frac{u^{-\ell}[n+\ell-r][p+r-n]}{[n]^2}&=\frac{(u-1)^2}{[n]^2}(u^{n-r}-u^{-\ell})(u^{p+r-n-\ell}-u^{\ell})\\
&=\frac{(u-1)^2}{[n]^2}(u^{p}-u^{p+r-n-\ell}-u^{n-r}+u^{-\ell})\\
\end{align*}
Thus by induction
\begin{align}
&u^{-(n+1)\ell-(p-\ell)r}[p+\ell]\sqnom{n+\ell-r-1}{n}\sqnom{p+r-1}{n}\label{the stuff}\\
&=\frac{u^{-\ell}[n+\ell-r][p+r-n]}{[n]^2}\left(u^{-n\ell-(p-\ell)r}[p+\ell]\sqnom{n+\ell-r-1}{n-1}\sqnom{p+r-1}{n-1}\right)\notag\\
&=\frac{(u-1)^2}{[n]^2}(u^{p}-u^{p+r-n-\ell}-u^{n-r}+u^{-\ell})\left(\frac{(u-1)^{2-2n}}{[n-1]!^2}\sum_{i,j}C^r_n(i,j)(u^{ip}-u^{-i\ell})u^{j(p-\ell)}\right)\notag
\end{align}The two fractions match up to give the coefficient we want in front of the sum.
Note that
\begin{align*}(u^p+u^{-\ell})(u^{ip}-u^{-i\ell})u^{j(p-\ell)}&=(u^{(i+1)p}-u^{p-i\ell})u^{j(p-\ell)}+(u^{ip-\ell}-u^{-(i+1)\ell})u^{j(p-\ell)}\\
&= (u^{(i+1)p}-u^{-(i+1)\ell})u^{j(p-\ell)}+(u^{(i-1)p}-u^{-(i-1)\ell})u^{(j+1)(p-\ell)}\\
\end{align*}
and
\[-(u^{p+r-n-\ell}+u^{n-r})(u^{ip}-u^{-i\ell})u^{j(p-\ell)}=-u^{r-n}(u^{ip}-u^{-i\ell})u^{(j+1)(p-\ell)}-u^{n-r}(u^{ip}-u^{-i\ell})u^{j(p-\ell)}\]
So that in \eqref{the stuff} the coefficient of $(u^{ip}-u^{-i\ell})u^{p-\ell}$ is
\[C_n^r(i-1,j)+C^r_n(i+1,j-1)-u^{r-n}C_n^r(i,j-1)-u^{n-r}C_n^r(i,j)\]
which by definition is $C^r_{n+1}(i,j)$.

\end{proof}
By Theorem \ref{thm},
\begin{eqnarray*}
[n]u^{r(r-n)} S(q)^{-1} F^r_n(q,y)&=&\sum_{\ell\geq 0}\sum_{p\geq 0}u^{-n\ell-(p-\ell)r}[p+\ell]\sqnom{n+\ell-r-1}{n-1}\sqnom{p+r-1}{n-1}y^{p-\ell}q^{p\ell}\\
&=&\frac{(u-1)^{1-2n}}{[n-1]!^2}\sum_{i=1}^n\sum_{j=0}^{n-i}C^r_n(i,j)\sum_{p,\ell\geq 0}(u^{ip}-u^{-i\ell})u^{j(p-\ell)}y^{p-\ell}q^{p\ell}\\
&=&\frac{(u-1)^{1-2n}}{[n-1]!^2}\sum_{i=1}^n\sum_{j=0}^{n-i}C^r_n(i,j)\Psi(u^i,u^jy;q)\\
\end{eqnarray*}
So finally
\begin{theorem}\label{modus}
\[\frac{F^r_n(q,y)}{S(q)}=\frac{u^{r(n-r)}(u-1)^{1-2n}}{[n][n-1]!^2}\sum_{i=1}^n\sum_{j=0}^{n-i}C^r_n(i,j)\Psi(u^i,u^jy;q)\]
\end{theorem}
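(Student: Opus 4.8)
The plan is to obtain Theorem \ref{modus} by a purely algebraic manipulation of the closed form in Theorem \ref{thm}, using the preceding Lemma to break each summand into the elementary building blocks $(u^{ip}-u^{-i\ell})u^{j(p-\ell)}$ and then recognizing the resulting double sums as copies of the function $\Psi$ from Section 4.3. Granting the Lemma, no further geometry enters; the entire argument takes place in the ring of formal power series in $q$ whose coefficients are Laurent polynomials in $u$ and $y$.

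First I would start from Theorem \ref{thm},
\[\frac{F^r_n(q,y)}{S(q)}=\frac{u^{r(n-r)}}{[n]}\sum_{\substack{p\geq n-r\\\ell\geq r}}u^{-n\ell-(p-\ell)r}[p+\ell]\sqnom{n+\ell-r-1}{n-1}\sqnom{p+r-1}{n-1}y^{p-\ell}q^{p\ell},\]
and note that because $\sqnom{n+\ell-r-1}{n-1}$ vanishes for $\ell<r$ and $\sqnom{p+r-1}{n-1}$ vanishes for $p<n-r$, the constrained sum equals the unconstrained sum over all $p,\ell\geq 0$. Next I would substitute the Lemma for the summand $u^{-n\ell-(p-\ell)r}[p+\ell]\sqnom{n+\ell-r-1}{n-1}\sqnom{p+r-1}{n-1}$, turning $[n]u^{r(r-n)}S(q)^{-1}F^r_n(q,y)$ into a triple sum. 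Since the coefficients $C^r_n(i,j)$ and the scalar $(u-1)^{1-2n}/[n-1]!^2$ do not depend on $p$ or $\ell$, I would interchange the finite $(i,j)$-summation with the $(p,\ell)$-summation and factor these constants out. For each fixed $(i,j)$ this leaves the inner sum $\sum_{p,\ell\geq 0}(u^{ip}-u^{-i\ell})u^{j(p-\ell)}y^{p-\ell}q^{p\ell}$, which is precisely $\Psi(u^i,u^jy;q)$ by the defining formula for $\Psi$ in Section 4.3 --- the same identification used for $F^0_1$ just before Corollary \ref{rankone}, now applied with the substitutions $u\mapsto u^i$ and $y\mapsto u^jy$. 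Finally, using $u^{r(r-n)}=u^{-r(n-r)}$ to move the power of $u$ across, one reads off the asserted expression for $F^r_n(q,y)/S(q)$.

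Since the Lemma is available, the remaining work for Theorem \ref{modus} is entirely bookkeeping, and I expect the only points needing care to be: tracking the substitutions $u\mapsto u^i$, $y\mapsto u^jy$ correctly in each copy of $\Psi$; justifying the interchange of summation (legitimate because each power of $q$ receives only finitely many contributions, so the series are well defined term by term); and assembling the scalar factors $u^{r(n-r)}$, $[n]$, $(u-1)^{1-2n}$, and $[n-1]!^2$ without sign or exponent errors. The genuinely substantive step is the Lemma itself, whose inductive identity for the coefficients $C^r_n(i,j)$ supplies the decomposition the argument relies on; were that not already in hand, it --- rather than anything in the passage above --- would be the main obstacle.
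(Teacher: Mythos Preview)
Your proposal is correct and matches the paper's own argument essentially line for line: the paper derives Theorem \ref{modus} by exactly the three-step manipulation you describe---start from Theorem \ref{thm}, extend the $(p,\ell)$-sum to all nonnegative integers via the vanishing of the $u$-binomials, substitute the Lemma termwise, interchange the finite $(i,j)$-sum with the $(p,\ell)$-sum, and recognize each inner sum as $\Psi(u^i,u^jy;q)$. Your identification of the Lemma as the only substantive ingredient is also exactly right; once it is in hand, the remainder is bookkeeping, which the paper dispatches in three displayed lines.
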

For example, for $n=2$ the only nonzero $C^r_2(i,j)$ are 
\begin{eqnarray*}
C^r_2(2,0)=1&C^r_2(1,0)=-u^{1-r}&C^r_2(1,1)=-u^{r-1}\\
\end{eqnarray*}
and therefore
\begin{equation}\label{special}u^{r(r-2)}(u-1)^{3}[2]S(q)^{-1}F^r_2(q,y)=\Psi(u^2,y)-u^{1-r}\Psi(u,y)-u^{r-1}\Psi(u,uy)\end{equation}
\subsection{Euler Characteristics and Modularity}\label{euler}
Of particular interest is the generating function $f^r_n(q,y):=F^r_n(q,y)|_{t=\bar t=1}$ of the Euler characteristics $\chi\left(\Syst^n(r,D,a)\right)$ of the stable pair moduli spaces.  By definition,
\[f^r_n(q,y)=\sum_{g\geq 0}\sum_{k\in\mathbb Z}\chi\left(\Syst^n(r,D_g,k+r)\right)y^kq^{g-1}\]
The generating function $s(q)$ of the Euler characteristics of the Hilbert scheme of points is well known.  From \eqref{hilbert},
\[s(q)=S(q)|_{t=\bar t=1}=\sum_{g\geq 0}\chi(X^{[g]})q^{g-1}=q^{-1}\prod_{g\geq 1}\frac{1}{(1-q^g)^{24}}=\frac{1}{\eta(q)^{24}}\]
where $\eta(q)$ is the $q$-expansion of the Dedekind $\eta$ function.
Define
\[G^r_n(q,y)=\frac{F^r_n(q,y)}{S(q)}\]
and
\[g^r_n(q,y)=\frac{f^r_n(q,y)}{s(q)}\]
From Theorem \ref{thm},
\begin{theorem}
\[g^r_n(q,y)=\frac{1}{n}\sum_{\substack{p\geq n-r\\\ell\geq r}}(p+\ell)\binom{n+\ell-r-1}{n-1}\binom{p+r-1}{n-1}y^{p-\ell}q^{p\ell}\]
\end{theorem}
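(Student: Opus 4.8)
The plan is to derive this identity directly as the numerical specialization $u=t\bar t=1$ of the formula already established in Theorem \ref{thm}; the entire content sits there, and all that remains is to check that passing to Euler characteristics amounts to setting $u=1$ and then to evaluate the $u$-analogs appearing in the formula.

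First I would record that $g^r_n(q,y)=G^r_n(q,y)|_{u=1}$. As observed immediately after Theorem \ref{thm}, the only $t,\bar t$-dependence of $F^r_n(q,y)$ that does not factor through $u=t\bar t$ is carried by the single prefactor $S(q)$; hence $G^r_n(q,y)=F^r_n(q,y)/S(q)$ is a function of $u,y,q$ alone, and $F^r_n(q,y)=S(q)\,G^r_n(q,y)$. Specializing $t=\bar t=1$ (so $u=1$) turns every virtual Hodge polynomial $\hodge{\,\cdot\,}$ of a smooth moduli space into its topological Euler characteristic, so $f^r_n(q,y)=s(q)\,G^r_n(q,y)|_{u=1}$; dividing by $s(q)$ gives $g^r_n(q,y)=G^r_n(q,y)|_{u=1}$, in agreement with the definitions of $f^r_n$ and $s(q)$.

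Next I would perform the substitution $u=1$ term by term in
\[G^r_n(q,y)=\frac{u^{r(n-r)}}{[n]}\sum_{\substack{p\geq n-r\\\ell\geq r}}u^{-n\ell-(p-\ell)r}[p+\ell]\sqnom{n+\ell-r-1}{n-1}\sqnom{p+r-1}{n-1}y^{p-\ell}q^{p\ell}.\]
Here the $u$-integer $[m]=\sum_{i=0}^{m-1}u^i$, the Hodge polynomial of $\P^{m-1}$ (cf. \S\ref{uint}), satisfies $[m]|_{u=1}=m$; in particular $[n]|_{u=1}=n\neq 0$, so $1/[n]$ has no pole at $u=1$ and the substitution is legitimate. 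The square binomial $\sqnom{a}{b}$, being the Hodge polynomial of $\Gr(b,a)$, specializes to the Euler characteristic $\chi(\Gr(b,a))=\binom{a}{b}$. Finally the prefactors $u^{r(n-r)}$ and $u^{-n\ell-(p-\ell)r}$ all become $1$. Assembling these substitutions yields the asserted formula.

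The single point requiring any care is the interchange of the specialization $u=1$ with the infinite double sum, and this is harmless: for each fixed monomial $y^{p-\ell}q^{p\ell}$ the conditions $p-\ell=a$, $p\ell=b$ have only finitely many solutions $(p,\ell)$, so the coefficient of each monomial is a finite Laurent polynomial in $u$ and the specialization may be carried out coefficient by coefficient. There is thus no substantive obstacle; the theorem is precisely the $u=1$ shadow of Theorem \ref{thm}.
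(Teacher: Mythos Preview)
Your proposal is correct and matches the paper's approach exactly: the paper simply states ``From Theorem \ref{thm}'' before the statement, and your argument spells out precisely this specialization $u=1$ with the evident substitutions $[m]\mapsto m$, $\sqnom{a}{b}\mapsto\binom{a}{b}$, and $u^N\mapsto 1$. Your added remarks on why the specialization commutes with the sum are sound and go a bit beyond what the paper bothers to say.
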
Note that the coefficient in \eqref{rankone} can be rewritten at $u=1$ as 
\[\frac{-1}{(1-y)(1-y^{-1})}=\left(\sqrt{y}-\frac{1}{\sqrt{y}}\right)^{-2}\]
Thus, for $r=0$, $n=1$ we recover the Kawai-Yoshioka formula \cite{KY}
\begin{corollary}
\[g_1^0(q,y)=\left(\sqrt{y}-\frac{1}{\sqrt{y}}\right)^{-2}\prod_{n\geq 1}\frac{(1-q^n)^4}{(1-yq^n)^2(1-y^{-1}q^n)^2}\]
\end{corollary}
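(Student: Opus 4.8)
The plan is to obtain $g^0_1(q,y)$ as the specialization $u\to 1$ of the closed form for $G^0_1(q,y)=F^0_1(q,y)/S(q)$ already recorded in Corollary \ref{rankone}. The key preliminary observation is that $G^r_n(q,y)$ depends on $t,\bar t$ only through $u=t\bar t$: this is exactly the observation immediately following Theorem \ref{thm}, since all $t,\bar t$ dependence of $F^r_n$ that does not factor through $u$ lives in the factor $S(q)$, which is divided out. Consequently, passing to Euler characteristics by setting $t=\bar t=1$ in the quotient $F^0_1/S$ is the same as setting $u=1$, and because $s(q)=S(q)|_{t=\bar t=1}$ is a nonzero Laurent series the substitution commutes with the division, so that
\[ g^0_1(q,y)=\frac{f^0_1(q,y)}{s(q)}=\left.\frac{F^0_1(q,y)}{S(q)}\right|_{t=\bar t=1}=G^0_1(q,y)\big|_{u=1}. \]

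First I would specialize the rational prefactor of \eqref{rankone}. At $u=1$ it becomes $\frac{-1}{(1-y)(1-y^{-1})}$, and using the elementary identity $(1-y)(1-y^{-1})=-\left(\sqrt y-\tfrac{1}{\sqrt y}\right)^2$ — precisely the rewriting noted just before the corollary — this equals $\left(\sqrt y-\tfrac{1}{\sqrt y}\right)^{-2}$.

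Next I would specialize the infinite product. Each factor in Corollary \ref{rankone},
\[ \frac{(1-q^n)^2(1-uq^n)(1-u^{-1}q^n)}{(1-yq^n)(1-y^{-1}q^n)(1-uyq^n)(1-u^{-1}y^{-1}q^n)}, \]
collapses at $u=1$: the numerator becomes $(1-q^n)^4$ and the denominator becomes $(1-yq^n)^2(1-y^{-1}q^n)^2$. Multiplying the two specialized pieces yields
\[ g^0_1(q,y)=\left(\sqrt y-\tfrac{1}{\sqrt y}\right)^{-2}\prod_{n\geq 1}\frac{(1-q^n)^4}{(1-yq^n)^2(1-y^{-1}q^n)^2}, \]
which is the asserted Kawai--Yoshioka formula.

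There is no serious obstacle here: the statement is a direct $u\to 1$ specialization of the already-established Corollary \ref{rankone}, and the prefactor rewriting is recorded verbatim in the text preceding the corollary. The only point requiring care is the first step — justifying that the Euler-characteristic specialization $t=\bar t=1$ coincides with setting $u=1$ in $G^0_1$ — but this is immediate once one invokes the observation that $F^0_1/S$ is a function of $u$ alone. One could alternatively derive the product form directly from the series expression for $g^0_1$ furnished by the preceding Theorem (with $r=0,n=1$), recognizing the resulting sum as a Jacobi-type triple product; but specializing Corollary \ref{rankone} is cleaner and avoids reproving that product identity.
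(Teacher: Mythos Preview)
Your proposal is correct and follows exactly the paper's approach: the paper simply observes that the prefactor in Corollary~\ref{rankone} rewrites at $u=1$ as $\left(\sqrt y-\tfrac{1}{\sqrt y}\right)^{-2}$ and then states the corollary, so your argument is a faithful (and slightly more carefully justified) expansion of that one-line derivation.
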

From \cite{MPT} we know the $v$ coefficients of $g_1^0(q,y)$ after the change of variable $y=-e^{iv}$ are (the $q$-expansions of) classical modular forms,
\[-g^0_1(q,-y)\stackrel{y=-e^{iv}}{=}\frac{1}{v^2}\cdot\exp\left(\sum_{g\geq 1}v^{2g}\frac{|B_{2g}|}{g\cdot (2g)!}E_{2g}(q)\right)\]
where $E_{2g}(q)$ is the $q$-expansion of the $2g$th Eisenstein series and $B_{2g}$ is the $2g$th Bernoulli number, defined by
$\frac{t}{e^t-1}=\sum_{n=0}^\infty B_n\frac{t^n}{n!}$.  See \cite{forms}, for example, for an elementary treatment of modular forms.  Note that

\begin{align*}
\frac{iv}{e^{iv}-1} &= \sum_{m\geq 0} \frac{B_m(iv)^m}{m!}\\
\frac{-iv}{e^{-iv}-1} &= \sum_{m\geq 0} \frac{B_m(-iv)^m}{m!}
\end{align*}

thus \begin{align*}
\lim_{u\to 1}\frac{v^2}{(1-u^{k+l}e^{iv})(1-u^{-l}e^{-iv})}&=\sum_{m,n\geq 0}B_m\frac{(iv)^m}{m!}B_n\frac{(-iv)^n}{n!}\\
&=\sum_{n\geq 0}\frac{i^nv^n}{n!}\sum_{k=0}^n(-1)^kB_kB_{n-k}\binom{n}{k}
\end{align*}
is a power series in $\mathbb{Q}[\![v]\!]$, which we denote by $\mathcal{B}$.

The divisor functions
\[\sigma_g(n)=\sum_{d|n}d^g\]
are related to the Eisenstein series by
\[E_{2g}(q)=1-\frac{4g}{B_{2g}}\sum_{n\geq 1}\sigma_{2g-1}q^n\]
$E_{2g}(q)$ is a modular form of weight $2g$ and level $\Gamma(1)$.  The Eisenstein series $E_{2g+1}(q)$ of odd weight $2g+1$ and level $\Gamma(2)$ are defined by
\[E_{2g+1}(q)=1+\frac{4(-1)^g}{e_{2g}}\sum_{n\geq 1}\sigma_{2g-1}q^{n/2}\]
where the numbers $e_n$ are defined by $\frac{1}{\cos t}=\sum_{n\geq 0}e_n\frac{t^n}{n!}$  Let
\[R=\Q(i)[E_{2g}(q),E_{2g+1}(q^2)|g\geq 1]\]
be an algebra generated by modular forms on $\Gamma(4)$.  Clearly the generating functions 
$\displaystyle \Sigma_g=\sum_{n\geq 1}\sigma_g(n)q^n\in R$
for $g\geq 1$.  The modularity result for $g^r_n(q,y)$ is:
\begin{theorem}\label{thm:modularity}
The coefficient of $v^s$ in the power series expansion of $v^2g_n^r(q,e^{iv})$ is itself a power series in 
$q$, and this coefficient is in fact in the algebra $R$.
\end{theorem}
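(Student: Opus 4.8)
The plan is to work directly from the explicit formula for $g_n^r(q,y)$ just established, substitute $y=e^{iv}$, and read off the coefficient of $v^s$ as a finite $\mathbb{Q}(i)$-linear combination of divisor-sum generating functions. Expanding $e^{i(p-\ell)v}$, one finds that for $s\geq 2$ the coefficient of $v^s$ in $v^2g_n^r(q,e^{iv})$ is
\[
\frac{i^{s-2}}{n\,(s-2)!}\sum_{p,\ell}(p+\ell)(p-\ell)^{s-2}\binom{n+\ell-r-1}{n-1}\binom{p+r-1}{n-1}q^{p\ell}.
\]
First I would split off the locus $p\ell=0$ from the bulk $p,\ell\geq 1$. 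The boundary terms (present for $r=0$, via $\ell=0$) contribute only to the $q^0$-coefficient; there one must sum over the $y$-powers \emph{before} expanding in $v$ (term-by-term expansion diverges near the pole at $y=1$), obtaining a rational function of $y$ whose Laurent coefficients in $v$ lie in $\mathbb{Q}(i)$. Since $R\supseteq\mathbb{Q}(i)$ these constant contributions are automatically in $R$, so the content lives entirely in the bulk.

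For the bulk, since $q^{p\ell}$ is summed over the symmetric range $p,\ell\geq 1$, only the symmetrization
\[
F^{\mathrm{sym}}(p,\ell)=\tfrac12(p+\ell)(p-\ell)^{s-2}\big[\,P(\ell)Q(p)+(-1)^sP(p)Q(\ell)\,\big],\quad P(\ell)=\binom{n+\ell-r-1}{n-1},\ \ Q(p)=\binom{p+r-1}{n-1},
\]
matters, and I expand it into monomials $p^a\ell^b$. The basic building block is
\[
\Theta_{a,b}:=\sum_{p,\ell\geq 1}p^a\ell^b q^{p\ell}=\Big(q\tfrac{d}{dq}\Big)^{\min(a,b)}\Sigma_{|a-b|},
\]
using $\sum_{d\mid N}d^a(N/d)^b=N^{\min(a,b)}\sigma_{|a-b|}(N)$. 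Thus each $v^s$-coefficient becomes a $\mathbb{Q}(i)$-combination of iterated $q\frac{d}{dq}$-derivatives of the $\Sigma_c$.

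The crucial point, and the one place the argument could break, is the potential appearance of diagonal monomials $p^a\ell^a$, which would produce $(q\frac{d}{dq})^a\Sigma_0$; the divisor-count series $\Sigma_0$ is essentially a weight-one object that is \emph{not} modular, and neither it nor its derivatives lie in $R$. Here the overall factor $(p+\ell)$ rescues the computation: $F^{\mathrm{sym}}$ is manifestly divisible by $(p+\ell)=e_1$, so when written in the symmetric variables $e_1=p+\ell$, $e_2=p\ell$ it contains no pure power $e_2^a=(p\ell)^a$. Equivalently every diagonal coefficient $[p^a\ell^a]F^{\mathrm{sym}}$ vanishes, so $\Sigma_0$ and its derivatives never occur and only $\Theta_{a,b}$ with $a\neq b$, i.e.\ $|a-b|\geq 1$, survive. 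I would isolate this divisibility as the key lemma; it is the main obstacle, and it is resolved purely formally.

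It then remains to invoke two facts. First, $\Sigma_c\in R$ for every $c\geq 1$ (as given): the odd-index sums $\Sigma_{2g-1}$ come from the level-$\Gamma(1)$ series $E_{2g}$, while the even-index sums $\Sigma_{2g}$ come from the odd-weight series $E_{2g+1}(q^2)$ of level $\Gamma(4)$—which is precisely why the substitution $y=e^{iv}$, rather than $y=-e^{iv}$ (which stays in level $\Gamma(1)$ as in \cite{MPT}), forces the larger algebra $R$; the parity bookkeeping is automatic since $|a-b|\equiv a+b\pmod 2$. Second, $R$ is closed under $q\frac{d}{dq}$: for the level-$\Gamma(1)$ generators this is the classical Ramanujan system for $E_2,E_4,E_6$, and for the $E_{2g+1}(q^2)$ it follows from the corresponding differential equations of the level-$\Gamma(4)$ Eisenstein series. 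Combining these, every surviving $\Theta_{a,b}$ lies in $R$, whence each $v^s$-coefficient—boundary constant plus bulk divisor sums—lies in $R$, completing the proof.
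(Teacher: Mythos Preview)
Your approach is genuinely different from the paper's: the paper goes through Theorem~\ref{modus} (the decomposition of $G_n^r$ into the functions $\Psi(u^i,u^jy;q)$), passes to $\Phi$, and then applies L'H\^opital in the variable $u$, so that only $u$-derivatives of $\log\Phi$ are needed---these produce linear combinations of $\Sigma_w$ directly, with no appeal to $q\tfrac{d}{dq}$-closure. Your direct route from the closed formula for $g_n^r$ is more elementary in spirit and could be made to work, but the argument as written has a real gap at the step you flag as the ``key lemma.''

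Your justification that $[p^a\ell^a]F^{\mathrm{sym}}=0$ is incorrect. Divisibility by $e_1=p+\ell$ does imply that the $(e_1,e_2)$-expansion of $F^{\mathrm{sym}}$ has no pure $e_2^a$ term, but this is \emph{not} equivalent to the vanishing of diagonal monomials in the $(p,\ell)$-expansion: already $e_1^2=(p+\ell)^2$ is divisible by $e_1$, has no pure $e_2$-power, yet $[p\ell](p+\ell)^2=2$. So the argument does not establish what you need. The claim is nevertheless true, but for a different reason you have not used: the binomial reflection $\binom{-m-1}{k}=(-1)^k\binom{m+k}{k}$ gives $Q(-\ell)=(-1)^{n-1}P(\ell)$ and $P(-p)=(-1)^{n-1}Q(p)$, whence $F(-\ell,-p)=-F(p,\ell)$. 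Thus every monomial $p^a\ell^b$ in $F$ (and hence in $F^{\mathrm{sym}}$) has \emph{odd} total degree $a+b$, so diagonal terms are impossible. This parity observation also shows that only $\Theta_{a,b}$ with $|a-b|$ odd occur, i.e.\ only $\Sigma_c$ with $c$ odd; then one only needs the Ramanujan closure of $\mathbb{Q}[E_2,E_4,E_6]$ under $q\tfrac{d}{dq}$, not your stronger and unproven assertion that the full algebra $R$ (including the odd-weight generators $E_{2g+1}(q^2)$) is $q\tfrac{d}{dq}$-stable. As written, both the diagonal-vanishing step and the $q\tfrac{d}{dq}$-closure step are gaps; with the parity argument in place, both are repaired simultaneously.
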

First we have
\begin{lemma}Let $\displaystyle\log\Phi(u^k,u^\ell e^{iv};q)=\sum_{s\geq 0}\psi_{k,\ell,s}v^s$ where $\psi_{k,\ell,s}$ is a function of $u$ and $q$ (recall $\Phi$ was defined in \eqref{Phi def}). Then for all $t\geq 0$, the $t$-th derivatives $\frac{d^t}{du^t}\psi_{k,\ell,s}|_{u=1}\in R$.
\end{lemma}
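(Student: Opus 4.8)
The plan is to reduce the entire statement to the single fact, already recorded just above the lemma, that $\Sigma_j=\sum_{n,m\geq 1}q^{nm}m^j=\sum_{N\geq 1}\sigma_j(N)q^N$ lies in $R$ for every $j\geq 1$. The whole content of the lemma is then that only these $j\geq 1$ can occur, the non-modular term $\Sigma_0=\sum_N\sigma_0(N)q^N$ being absent.

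First I would take the logarithm and expand using $\log(1-zq^n)=-\sum_{m\geq 1}z^mq^{nm}/m$, obtaining
\[\log\Phi(u,y;q)=-\sum_{n,m\geq 1}\frac{q^{nm}}{m}\Big(2+u^m+u^{-m}-y^m-y^{-m}-(uy)^m-(u^{-1}y^{-1})^m\Big).\]
Substituting $u\mapsto u^k$ and $y\mapsto u^\ell e^{iv}$ sends the four $y$-terms to $u^{\pm\ell m}e^{\pm imv}$ and $u^{\pm(k+\ell)m}e^{\pm imv}$ and leaves the rest $v$-independent. Extracting the coefficient of $v^s$ via $e^{\pm imv}=\sum_{s\geq 0}(\pm im)^sv^s/s!$ yields, for $s\geq 1$,
\[\psi_{k,\ell,s}=\frac{i^s}{s!}\sum_{n,m\geq 1}q^{nm}\,m^{s-1}\Big(u^{\ell m}+u^{(k+\ell)m}+(-1)^s u^{-\ell m}+(-1)^s u^{-(k+\ell)m}\Big),\]
while the $s=0$ coefficient is the same double sum but retains the $v$-independent block $2+u^{km}+u^{-km}-u^{\ell m}-u^{-\ell m}-u^{(k+\ell)m}-u^{-(k+\ell)m}$.

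Next I would differentiate. Since $\frac{d^t}{du^t}u^{am}\big|_{u=1}=(am)(am-1)\cdots(am-t+1)$ is a polynomial in $m$ of degree $t$ that vanishes at $m=0$ whenever $t\geq 1$, applying $\frac{d^t}{du^t}|_{u=1}$ term by term converts each $\psi_{k,\ell,s}$ into $\sum_{n,m\geq 1}q^{nm}P(m)$ for an explicit $P\in\mathbb{Q}(i)[m]$ depending on $k,\ell,s,t$. Writing $P(m)=\sum_j c_jm^j$ and summing over $n$ gives the finite combination $\sum_j c_j\Sigma_j$, so it remains only to verify that $P$ has no constant term.

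This last cancellation is the crux, and it comes from the symmetric pairing of positive and negative $u$-exponents in $\Phi$. For $s\geq 1$ the factor $m^{s-1}$ already pushes the lowest power of $m$ up: when $t\geq 1$ every $(am)(am-1)\cdots(am-t+1)$ is divisible by $m$, so the lowest surviving power is $m^s$, and the only $t=0$ case that could reach $m^0$ is $s=1$, where the bracket $1+(-1)^1$ vanishes. For $s=0$ and $t=0$ one gets $\psi_{k,\ell,0}|_{u=1}=0$ directly, since $2+1+1-1-1-1-1=0$; for $s=0$ and $t\geq 1$ the constant-in-$m$ part of $P$ is the $m^1$-coefficient of the symmetric combination $(am)(am-1)\cdots(am-t+1)+(-am)(-am-1)\cdots(-am-t+1)$, which is $a(-1)^{t-1}(t-1)!+(-a)(-1)^{t-1}(t-1)!=0$ for each $a\in\{k,\ell,k+\ell\}$. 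Hence only $\Sigma_j$ with $j\geq 1$ survive and $\frac{d^t}{du^t}\psi_{k,\ell,s}|_{u=1}\in R$. I expect the bookkeeping of $P$ to be routine; the genuine point, and the only place where modularity could fail, is ensuring the degree-zero term $\Sigma_0$ drops out, which is precisely what the $a\leftrightarrow -a$ symmetry of $\Phi$ guarantees.
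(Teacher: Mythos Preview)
Your argument is correct and follows the same route as the paper: expand $\log\Phi$ as a double sum, extract the $v^s$ coefficient, differentiate in $u$ using $\frac{d^t}{du^t}u^{am}|_{u=1}=(am)_t$, and recognize the result as a $\mathbb{Q}(i)$-linear combination of the $\Sigma_j$. If anything you are more explicit than the paper about the one genuinely nontrivial point---that the $a\leftrightarrow -a$ pairing in $\Phi$ kills the would-be $\Sigma_0$ contribution in every case---which the paper's proof asserts but does not spell out.
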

\begin{proof}
By definition 
\begin{align*}
\Phi(u^k,u^\ell e^{iv},q)&=\prod_{n\geq 1}\frac{(1-q^n)^2(1-u^kq^n)(1-u^{-k}q^n)}{(1-u^{k+\ell}e^{iv}q^n)(1-u^{-k-\ell}e^{-iv}q^n)(1-u^{\ell}e^{iv}q^n)(1-
u^{-\ell}e^{-iv}q^n)}
\end{align*}
and so
\begin{align*}
\log\Phi(u^k,u^\ell e^{iv};q)=&\sum_{n\geq 1}\left(2\log(1-q^n)+\log(1-u^kq^n)+\log(1-u^{-k}q^n)\right.\\
&-\log(1-u^{k+\ell}e^{iv}q^n)+
\log(1-u^{-k-\ell}e^{-iv}q^n)\\
&+\left.\log(1-u^{\ell}e^{iv}q^n)+\log(1-u^{-\ell}e^{-iv}q^n)\right)\\
=&\sum_{n\geq 1}\sum_{r\geq 1}\frac{q^{nr}}{r}\left(2+u^{kr}+u^{-kr}\right)-\\
&\sum_{n\geq 1}\sum_{r\geq 1}\sum_{s\geq 0}\frac{q^{nr}(ivr)^s}{rs!}\left(u^{(k+\ell)r}+(-1)^su^{-(k+\ell)r}+u^{\ell r}+(-1)^su^{-\ell r}\right)\\
=&\sum_{n\geq 1}q^n\sum_{r\mid n}\frac{\left(2+u^{kr}+u^{-kr}\right)}{r}-\\
&\sum_{s\geq 0}\frac{i^sv^s}{s!}\sum_{n\geq 1}q^n\sum_{r\mid n}r^{s-1}\left(u^{(k+\ell)r}+(-1)^su^{-(k+\ell)r}+u^{\ell r}+(-1)^su^{-\ell r}\right)
\end{align*}
This implies that
\[\psi_{k,\ell,0}=\sum_{n\geq 1}q^n\sum_{r\mid n}\frac{\left(2+u^{kr}+u^{-kr}-u^{(k+\ell)r}-u^{-(k+\ell)r}-u^{\ell r}-u^{-\ell  r}\right)}{r}\]
and for $s\geq 1$
\[\psi_{k,\ell,s}=-\frac{i^s}{s!}\sum_{n\geq 1}q^n\sum_{r\mid n}r^{s-1}\left(u^{(k+\ell)r}+(-1)^su^{-(k+\ell)r}+u^{\ell r}+(-1)^su^{-\ell r}\right)\]
Evaluating at $u=1$ we get $\psi_{k,\ell,0}|_{u=1}=0$ and $\displaystyle \psi_{k,\ell,s}|_{u=1}=-\frac{2(1+(-1)^2)i^s}{s!}\Sigma_{s-1}$. Differentiating, we get that for $t\geq 1$ and $s\geq 1$ we have
\begin{align*}
\left.\left(\frac{d^t}{du^t}\psi_{k,\ell,0}\right)\right|_{u=1}&=t!\sum_{n\geq 1}q^n\sum_{r\mid n}r^{s-1}\left(\binom{kr}{t}+\binom{-kr}{t}-\binom{(k+\ell)r}{t}\right.\\
&-\left.\binom{-(k+\ell)r}{t}-\binom{\ell r}{t}-\binom{-\ell r}{t}\right)
\end{align*}
and
\begin{align*}
\left.\left(\frac{d^t}{du^t}\psi_{k,\ell,s}\right)\right|_{u=1}&=-\frac{i^st!}{s!}\sum_{n\geq 1}q^n\sum_{r\mid n}r^{s-1}\left(\binom{(k+\ell)r}{t}+(-1)^s\binom{-(k+\ell)r}{t}\right.\\
&=+\left.\binom{\ell r}{t}+(-1)^s\binom{-\ell r}{t}\right)
\end{align*}
The conclusion then follows as each coefficient of $q^n$ in the above expansions is either 0 or a $\mathbb{Q}$-linear combination of 
powers of $r$ which implies that the derivative evaluated at $u=1$ is a linear combination of terms of the form $\Sigma_w$ for $w\geq 1$.
\end{proof}

\begin{proof}[Proof of Theorem \ref{thm:modularity}]
Note that
\begin{align*}
v^2g_n^r(q,e^{iv})&=\lim_{u\to 1}v^2G_n^r(q,e^{iv})\\
&=\lim_{u\to 1}\frac{u^{r(n-r)}(u-1)^{1-2n}}{[n][n-1]!^2}\sum_{k=1}^n\sum_{\ell=0}^{n-k}C^r_n(i,j)\Psi(u^k,u^\ell y;q)\\
&=\mathcal{B}\lim_{u\to 1}\frac{u^{r(n-r)}(u-1)^{1-2n}}{[n][n-1]!^2}\sum_{k=1}^n\sum_{\ell=0}^{n-k}C^r_n(i,j)\Phi(u^k,u^\ell y;q)
\end{align*}
To compute the limit we apply L'H\^opital observing that 
\[\left.\frac{d^{2n-1}}{du^{2n-1}}\frac{[n][n-1]!^2}{(u-1)^{2n-1}}\right|_{u=1}=n^2\] We get
\begin{align*}
v^2g_n^r(q,e^{iv})&=\left.\frac{\mathcal{B}}{n^2}\frac{d^{n^2}}{du^{n^2}}
\left(\sum_{k=1}^n\sum_{\ell=0}^{n-k}C^r_n(k,\ell)\Phi(u^k,u^\ell e^{iv};q)\right)\right|_{u=1}
\end{align*}
so it is enough to check that for all $t\geq 0$, $\frac{d^t}{du^t}\Phi(u^k,u^\ell e^{iv};q)|_{u=1}\in R[\![v]\!]$.

But $\displaystyle \frac{d^t}{du^t}\Phi(u^k,u^\ell e^{iv};q)=\frac{d^t}{du^t}\exp\left(\sum_{s\geq 0}\psi_{k,\ell,s}v^s\right)$ is of the form
\[\exp\left(\sum_{s\geq 0}\psi_{k,\ell,s}v^s\right)\mathcal{F}_{k,\ell,t}=\Phi(u^k,u^\ell e^{iv};q)\mathcal{F}_{k,\ell,t}\]
where $\mathcal{F}_{k,\ell,t}$ is an expression involving only the $\psi_{k,\ell,s}$ and their derivatives. Evaluating at $u=1$, the previous lemma shows that all coefficients of powers of $v$ in $\mathcal{F}_{k,\ell,t}$ are in $R$. Finally, note that 
\[\Phi(1, e^{iv}; q)=\prod_{n\geq 1}\frac{(1-q^n)^4}{(1-e^{iv}q^n)^2(1-e^{-iv}q^n)^2}\]
and this was computed in [MPT, p. 53] to be $\displaystyle 4\sum_{k\geq 1}\frac{(-1)^kv^{2k}}{(2k)!}\Sigma_{2k-1}$. Multiplying everything together we get the required conclusion.
\end{proof}

\section{Computations}\label{sect:4}
\newcommand{\ps}[1]{[\![#1]\!]}
\subsection{$u$-Binomial Coefficients}\label{uint}
The $u$-integer $[n]$ is the polynomial in $u$ given by 
\[[n]=\frac{u^{n}-1}{u-1}\]
The $u$-factorial and $u$-binomial coefficients are defined similarly:
\begin{align*}
[n]!=\prod_{s=1}^{n}[s]&&\sqnom{n}{k}=\begin{cases}\frac{[n]!}{[k]![n-k]!}&k\leq n\\
0&k>n\\ 
\end{cases}\\
\end{align*}
By fiat $[0]!=1$.  

\subsection{Properties of $u$-Binomial Coefficients}\label{ubi}
Most binomial identities have $u$-analogs, many of which recover the classical identities in the $u\into 1$ limit.  We collect here the properties we will need with proofs.
\begin{lemma}\label{refer}For any $k\leq n$\begin{enumerate}
\item 
\[[n]=[n-k]+u^{n-k}[k]\]
\item \begin{eqn}\label{hello}\sqnom{n+1}{k}=\sqnom{n}{k}+u^{n+1-k}\sqnom{n}{k-1}\end{eqn}

\end{enumerate}
\end{lemma}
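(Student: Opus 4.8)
The plan is to prove both identities by elementary manipulation of the definitions, using part (1) as the key input for part (2).

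For part (1), I would simply clear denominators. Writing everything over $u-1$,
\[
[n-k]+u^{n-k}[k]=\frac{u^{n-k}-1}{u-1}+u^{n-k}\cdot\frac{u^{k}-1}{u-1}=\frac{(u^{n-k}-1)+(u^{n}-u^{n-k})}{u-1}=\frac{u^{n}-1}{u-1}=[n],
\]
so this is a direct computation with no obstacle.

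For part (2), the natural strategy is to put the right-hand side over a common denominator and recognize the resulting factor via part (1). I would bring both terms $\sqnom{n}{k}=\frac{[n]!}{[k]![n-k]!}$ and $u^{n+1-k}\sqnom{n}{k-1}=u^{n+1-k}\frac{[n]!}{[k-1]![n-k+1]!}$ over the common denominator $[k]![n-k+1]!$. This forces multiplying the first term by $[n-k+1]$ and the second by $[k]$, yielding
\[
\sqnom{n}{k}+u^{n+1-k}\sqnom{n}{k-1}=\frac{[n]!}{[k]![n-k+1]!}\left([n-k+1]+u^{n+1-k}[k]\right).
\]
The only point requiring the previous part is the identification of the bracketed factor: applying part (1) with $n$ replaced by $n+1$ gives exactly $[n-k+1]+u^{n+1-k}[k]=[n+1]$. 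Substituting this in collapses the expression to $\frac{[n+1]!}{[k]![n+1-k]!}=\sqnom{n+1}{k}$, as desired.

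The entire argument is routine, so there is no genuine obstacle; the only care needed is bookkeeping in the choice of common denominator and in correctly aligning the exponent $n+1-k$ when invoking part (1). I would also note in passing that both identities recover the classical $[n]=n$ and Pascal's rule in the limit $u\to 1$, which serves as a sanity check on the exponents.
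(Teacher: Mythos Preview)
Your proof is correct and follows essentially the same approach as the paper's: part (1) is a one-line computation from the definition of $[n]$, and part (2) is reduced to part (1) (with $n$ replaced by $n+1$) after a factorial manipulation. The only cosmetic difference is direction---the paper starts from $\sqnom{n+1}{k}$ and splits $\frac{[n+1]}{[n+1-k]}=1+u^{n+1-k}\frac{[k]}{[n+1-k]}$, whereas you start from the right-hand side and combine---but the content is identical.
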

\begin{proof}
\begin{enumerate}
\item Follows immediately from $[n+1]=\sum_{s=0}^{n}u^s$.
\item\begin{eqnarray*}\sqnom{n+1}{k}&=&\frac{[n+1]!}{[k]![n+1-k]!}\\
&=&\frac{[n]!}{[k]![n-k]!}\left(\frac{[n+1]}{[n+1-k]}\right)\\
&=&\frac{[n]!}{[k]![n-k]!}\left(1+u^{n+1-k}\frac{[k]}{[n+1-k]}\right)\\
&=&\sqnom{n}{k}+u^{n+1-k}\sqnom{n}{k-1}\\
\end{eqnarray*}
\end{enumerate}
\end{proof}Note that $\sqnom{n}{k}$ has degree $k(n-k)$.  The symmetric $u$-binomial coefficient is defined for $0\leq k\leq n$ by
\[\curbnom{n}{k}=u^{-\frac{k(n-k)}{2}}\sqnom{n}{k}\]
Also, under the same conditions let
\[\curbnom{-n}{k}=(-1)^k\curbnom{n+k-1}{k}\]
Let 
\[K_n(t,u)=\prod_{s=0}^{n-1}(1+tu^{s-\frac{n-1}{2}})\]
for $n\geq 0 $.  
\begin{lemma}
\[K_n(t^{-1},u)=t^{-n}K_n(t,u)\]
\end{lemma}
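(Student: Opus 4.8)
The plan is to exploit the symmetry of the exponent set appearing in the product. Writing $E=\{s-\tfrac{n-1}{2}\mid s=0,\dots,n-1\}$, the defining factors of $K_n(t,u)$ are indexed by the elements of $E$, and the crucial observation is that $E$ is symmetric about the origin: the reindexing $s\mapsto n-1-s$ sends $s-\tfrac{n-1}{2}$ to $-(s-\tfrac{n-1}{2})$, so $E=-E$, and in particular $\sum_{e\in E}e=0$ (equivalently, $\sum_{s=0}^{n-1}\bigl(s-\tfrac{n-1}{2}\bigr)=\binom{n}{2}-n\cdot\tfrac{n-1}{2}=0$).

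First I would factor each term of $K_n(t^{-1},u)=\prod_{e\in E}(1+t^{-1}u^{e})$ by pulling out $t^{-1}u^{e}$:
\[1+t^{-1}u^{e}=t^{-1}u^{e}\,(1+tu^{-e}).\]
Multiplying over all $e\in E$ then gives
\[K_n(t^{-1},u)=t^{-n}\Bigl(\prod_{e\in E}u^{e}\Bigr)\prod_{e\in E}(1+tu^{-e}).\]
Now I would invoke the two symmetry facts: since $\sum_{e\in E}e=0$ we have $\prod_{e\in E}u^{e}=u^{0}=1$, and since $E=-E$ the relabeling $e\mapsto -e$ gives $\prod_{e\in E}(1+tu^{-e})=\prod_{e\in E}(1+tu^{e})=K_n(t,u)$. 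Combining these yields $K_n(t^{-1},u)=t^{-n}K_n(t,u)$, as claimed.

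There is no serious obstacle here; the entire argument is a two-line manipulation. The only things to get right are the bookkeeping of the reindexing $s\mapsto n-1-s$ and the verification that the shifted exponents $s-\tfrac{n-1}{2}$ sum to zero, both of which are immediate. One could equally present the identity factorwise as $t^{-1}(1+tu^{e})=t^{-1}+u^{e}=u^{e}(1+t^{-1}u^{-e})$ applied to each factor of $t^{-n}K_n(t,u)$, and conclude by the same vanishing of the total exponent together with $E=-E$; either route is routine.
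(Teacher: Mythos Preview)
Your proof is correct and essentially the same as the paper's: both exploit the symmetry $E=-E$ of the exponent set. The paper factors out only $t^{-1}$ from each term to get $t^{-n}\prod_{s}(t+u^{s-\frac{n-1}{2}})$ and then pairs factors as $(t+u^s)(t+u^{-s})=(1+tu^s)(1+tu^{-s})$, whereas you factor out $t^{-1}u^e$ and use $\sum_{e\in E}e=0$ together with the reindexing $e\mapsto -e$; these are equivalent packagings of the same symmetry.
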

\begin{proof}
\begin{align*}
K_n(t^{-1},u)&=t^{-n}\prod_{s=0}^{n-1}(t+u^{s-\frac{n-1}{2}})
\end{align*}
but terms in the product come in pairs $(t+u^{s})(t+u^{-s})=(1+tu^s)(1+tu^{-s})$.
\end{proof}$K_n$ is invertible as a Laurent series in $t,u^{\frac{1}{2}}$; let 
\[K_{-n}(t,u)=K_{n}(t,u)^{-1}\]
There is an analog of Lemma \ref{refer} for symmetric $u$-binomial coefficients:

\begin{lemma}\label{candostuff}For any $0\leq k\leq n$\begin{enumerate}
\item 
\begin{eqn}\label{1}\curbnom{n+1}{k}=u^{-\frac{k}{2}}\curbnom{n}{k}+u^{\frac{n+1-k}{2}}\curbnom{n}{k-1}\end{eqn}

\item
$K_{n}(t,u)$ is the generating function for the $\curbnom{n}{k}$, that is
\[K_{n}(t,u)=\sum_{k=0}^{\infty}t^k\curbnom{n}{k}\]
\item
\[\curbnom{n+k}{k}=\sum_{s=0}^{k}u^{\frac{sn+s-k}{2}}\curbnom{n+k-s-1}{k-s}\]
\item 
$K_{-n}(t,u)$ is the generating function for the $\curbnom{-n}{k}$, that is
\[K_{-n}(t,u)=\sum_{k=0}^{\infty}t^k\curbnom{-n}{k}\]
\end{enumerate}
\end{lemma}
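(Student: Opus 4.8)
The plan is to establish the four parts in sequence, letting parts (1) and (3) furnish the combinatorial recursions while a single factorization of the product $K_n(t,u)$ powers both generating-function identities (2) and (4). Part (1) I would get immediately by unwinding the definition $\curbnom{n}{k}=u^{-k(n-k)/2}\sqnom{n}{k}$ and substituting the ordinary recursion \eqref{hello} for $\sqnom{n+1}{k}$; rewriting $\sqnom{n}{k}$ and $\sqnom{n}{k-1}$ back in symmetric form, the exponents collapse to the prefactors $u^{-k/2}$ and $u^{(n+1-k)/2}$ after a short bookkeeping, and this is pure algebra. For part (2) I would induct on $n$, the base case $n=0$ being the empty product $K_0=1=\curbnom{0}{0}$. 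The key observation is the factorization
\[K_{n+1}(t,u)=(1+tu^{n/2})\,K_n(tu^{-1/2},u),\]
obtained by splitting off the top factor $s=n$ and noticing that the residual product $\prod_{s=0}^{n-1}(1+tu^{s-n/2})$ is just $K_n$ with $t$ rescaled by $u^{-1/2}$, since $s-\tfrac{n}{2}=(s-\tfrac{n-1}{2})-\tfrac12$. Plugging in the inductive expansion $K_n(tu^{-1/2},u)=\sum_k t^k u^{-k/2}\curbnom{n}{k}$ and reading off the coefficient of $t^k$ produces exactly $u^{-k/2}\curbnom{n}{k}+u^{(n+1-k)/2}\curbnom{n}{k-1}$, which equals $\curbnom{n+1}{k}$ by part (1).

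Part (3) I would prove by induction on $k$, letting $S_k$ denote its right-hand side. A one-line telescoping shows that $S_k-u^{n/2}S_{k-1}$ retains only the $j=k$ term $u^{-k/2}\curbnom{n+k-1}{k}$, so $S_k=u^{n/2}S_{k-1}+u^{-k/2}\curbnom{n+k-1}{k}$. Substituting the inductive hypothesis $S_{k-1}=\curbnom{n+k-1}{k-1}$ and applying \eqref{1} with top index $n+k-1$ then gives $\curbnom{n+k}{k}$ exactly, closing the induction.

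Finally, part (4) again goes by induction on $n$ using the same factorization, now inverted: $K_{-(n+1)}(t,u)=(1+tu^{n/2})^{-1}K_{-n}(tu^{-1/2},u)$, with base case $K_{-1}=(1+t)^{-1}=\sum_k(-1)^k t^k$. Expanding $(1+tu^{n/2})^{-1}=\sum_m(-1)^m t^m u^{mn/2}$ against the inductive series for $K_{-n}$, the coefficient of $t^p$ is a convolution; using $\curbnom{-n}{k}=(-1)^k\curbnom{n+k-1}{k}$ to factor out $(-1)^p$, the surviving sum is precisely the right-hand side of part (3) under the reindexing $s=p-k$, so it collapses to $(-1)^p\curbnom{n+p}{p}=\curbnom{-(n+1)}{p}$. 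The main obstacle — and the step I would check most carefully — is exactly this reconciliation: one must track the half-integer exponents $u^{(p-k)n/2-k/2}$ and the cancellation of the two sign factors so that the convolution matches (3) term by term. Everything else is routine exponent arithmetic.
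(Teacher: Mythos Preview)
Your proposal is correct and follows essentially the same approach as the paper: part (1) by rescaling the asymmetric recursion \eqref{hello}, part (2) by induction via the factorization $K_{n+1}(t,u)=(1+tu^{n/2})K_n(tu^{-1/2},u)$ and part (1), part (3) by splitting off one term and recognizing $u^{n/2}S_{k-1}$ in the rest (then applying (1)), and part (4) by inverting the same factorization and collapsing the convolution using (3). The only cosmetic differences are your choice of base case $K_{-1}$ in (4) and your indexing of the surviving term in (3) as ``$j=k$'' (i.e.\ $s=0$).
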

\begin{proof}
\begin{enumerate}
\item Multiplying \eqref{hello} by $u^{\frac{k(n+1-k)}{2}}$ gives \eqref{1}.
\item

Note that 
\begin{eqn}\label{2}K_{n+1}(t,u)=\left(1+tu^{\frac{n}{2}}\right)K_{n}(tu^{-\frac{1}{2}},u)
\end{eqn}
Assuming by induction that the coefficient of $t^s$ in $K_{n}(tu^{-\frac{1}{2}},u)$ is $u^{-\frac{s}{2}}\curbnom{n}{s}$, the coefficient of $t^k$ in $K_{n+1}(t,u)$ is 
\[u^{-\frac{k}{2}}\curbnom{n}{k}+u^{\frac{n-k+1}{2}}\curbnom{n}{k-1}\]
which yields the result given part (1).
\item 
Replacing $n$ in \eqref{1} with $n+k-1$ we have
\begin{eqn}\label{3}\curbnom{n+k}{k}=u^{-\frac{k}{2}}\curbnom{n+k-1}{k}+u^{\frac{n}{2}}\curbnom{n+k-1}{k-1}\end{eqn}
Note that
\begin{align*}
\sum_{s=0}^{k}u^{\frac{sn+s-k}{2}}&\curbnom{n+k-s-1}{k-s}=u^{-\frac{k}{2}}\curbnom{n+k-1}{k}+\sum_{s=1}^{k}u^{\frac{sn+s-k}{2}}\curbnom{n+k-s-1}{k-s}\\
&=u^{-\frac{k}{2}}\curbnom{n+k-1}{k}+u^{\frac{n}{2}}\left(\sum_{s=0}^{k-1}u^{\frac{sn+s-k+1}{2}}\curbnom{n+k-s-2}{k-s-1}\right)\\
\end{align*}
By induction the term in parentheses is $\curbnom{n+k-1}{k-1}$, and by \eqref{3} the result follows.
\item
Inverting \eqref{2}, we have
\[K_{-n-1}(t,u)=\frac{1}{1+tu^{\frac{n}{2}}}K_{-n}(tu^{-\frac{1}{2}},u)=K_{-n}(tu^{-\frac{1}{2}},u)\sum_{s=0}^\infty (-1)^st^su^{\frac{ns}{2}}\]
Inductively assuming the coefficient of $t^{k-s}$ in $K_{-n}(tu^{-\frac{1}{2}},u)$ is 

\[u^{-\frac{k-s}{2}}\curbnom{-n}{s}=(-1)^{k-s}u^{-\frac{k-s}{2}}\curbnom{n+k-s-1}{k-s}\] the coefficient of $t^k$ in $K_{-n-1}(t,u)$ is
\[(-1)^k\sum_{s=0}^ku^{\frac{ns+s-k}{2}}\curbnom{n+k-s-1}{k-s}=(-1)^k\curbnom{n+k}{k}=\curbnom{-n-1}{k}\]
by part (3).

\end{enumerate}
\end{proof}

\subsection{$q$-Theta Functions}\label{theta}
Given expressions $a,b$ polynomial in $q$ (we will be more precise below), the Pochhammer symbol $(a,b)_{\infty}$ is a formal power series in $q$ defined by
\[(a,b)_{\infty}=\prod_{n=0}^\infty(1-ab^n)\] 
For example, $(q,q)_{\infty}=\prod_{n\geq 1}(1-q^n)$.
The $q$-theta function $\Theta(x;q)\in \Q[x,x^{-1}][[q]]$ is a formal power series in $q$ whose coefficients are Laurent polynomials in $x$.  It is defined by
\[\Theta(x;q)=(q,q)_{\infty}(x,q)_{\infty}(x^{-1}q,q)_{\infty}=(1-x)\prod_{n=1}^{\infty}(1-q^n)(1-xq^n)(1-x^{-1}q^n)\]
In particular $\Theta(x;q)$ has a simple root at $x=1$.  Our main use for $\Theta(x;q)$ is derived from an identity involving
\[\Phi(a,b;q):=\frac{(q,q)^3_{\infty}\Theta(ab;q)}{\Theta(a;q)\Theta(b;q)}\]
Note $\Phi(a,b;q)$ is not an element of $\Q[a,b]\ps{q}$, but it converges for $|q|<|a|,|b|<1$.  We have
\begin{lemma}For $n\in\Z$, define
\[\sign(n)=\begin{cases}
+1&n\geq 0\\
-1& n<0\\
\end{cases}\]
Then 
\[\Phi(a,b;q)=\sum_{\sign(i)=\sign(j)}\sign(i)a^ib^jq^{ij}\]
for $|q|<|a|,|b|<1$.
\end{lemma}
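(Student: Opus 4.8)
The plan is to treat both sides as meromorphic functions of $a$ with $b$ held fixed in the annulus $|q|<|b|<1$, and to identify them by a Liouville-type argument: I will show they have the same quasi-periodicity under $a\mapsto qa$ and the same principal parts, so that their difference is a pole-free function which is then forced to vanish. (This is a self-contained route to what is essentially a form of the Kronecker, or Ramanujan ${}_1\psi_1$, theta identity.)

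First I would put the right-hand side in closed form. Splitting the sum into its two sign-components and summing the inner geometric series gives
\[
F(a,b):=\sum_{\sign(i)=\sign(j)}\sign(i)\,a^i b^j q^{ij}
=\sum_{j\geq 0}\frac{b^j}{1-aq^j}-\sum_{j\geq 1}\frac{b^{-j}q^j}{a-q^j},
\]
where convergence of the first series uses $|a|<1$ and $|b|<1$, while convergence of the second uses $|q|<|a|$ and $|q|<|b|$ --- exactly the stated hypotheses. In this form $F(a,b)$ visibly extends to a meromorphic function of $a\in\C^{*}$ whose only singularities are simple poles at $a=q^n$, $n\in\Z$ (the poles at $a=q^{-j}$, $j\geq 0$, coming from the first sum and at $a=q^{j}$, $j\geq 1$, from the second). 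The same pole set holds for $\Phi(a,b;q)$, since $\Theta(a;q)$ vanishes simply at every $a=q^n$ while the numerator $\Theta(ab;q)$ does not vanish there for $b$ in our annulus.

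Next I would record the quasi-periodicity of both sides. From $\Theta(qx;q)=-x^{-1}\Theta(x;q)$, which is immediate from the product formula defining $\Theta$, one gets $\Phi(qa,b;q)=b^{-1}\Phi(a,b;q)$. For the series side, shifting the summation index $j$ in the two sums above and observing that the two leftover boundary terms $\tfrac{1}{1-a}$ and $\tfrac{1}{a-1}$ cancel yields $F(qa,b)=b^{-1}F(a,b)$. I would then match residues at the single point $a=1$: near $a=1$ one has $\Theta(a;q)\sim -(q,q)_\infty^{3}(a-1)$, whence $\Phi(a,b;q)\sim -1/(a-1)$ and $\operatorname{Res}_{a=1}\Phi=-1$; on the series side only the $j=0$ term $\tfrac{1}{1-a}$ of the first sum is singular at $a=1$, again giving residue $-1$. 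Because both functions obey the same functional equation, agreement of residues at $a=1$ forces agreement of residues at every $a=q^n$.

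Finally, the difference $h(a)=\Phi(a,b;q)-F(a,b)$ is then holomorphic on all of $\C^{*}$ and still satisfies $h(qa)=b^{-1}h(a)$. Expanding $h=\sum_{n}c_n a^n$ as a Laurent series convergent on $\C^{*}$, the functional equation gives $c_n(q^n-b^{-1})=0$ for all $n$; since $|q|<|b|<1$ forces $|q^n|\neq|b|^{-1}$ for every $n\in\Z$, all $c_n$ vanish and $h\equiv 0$, which is the claimed identity. I expect the only delicate point to be the index-shift bookkeeping that produces $F(qa,b)=b^{-1}F(a,b)$ together with the verification that the principal parts cancel, so that $h$ is genuinely pole-free; once the closed form for $F$ is in hand, the remaining steps are routine.
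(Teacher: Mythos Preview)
Your argument is correct. The closed form you derive for $F(a,b)$, the quasi-periodicity $F(qa,b)=b^{-1}F(a,b)$ via the index shift (with the boundary terms $\tfrac{1}{1-a}$ and $\tfrac{1}{a-1}$ cancelling), the residue match at $a=1$, and the concluding Laurent-series vanishing argument all check out; in particular your observation that $|q|<|b|<1$ rules out $q^n=b^{-1}$ for every $n\in\Z$ is exactly what is needed to force $h\equiv 0$.

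However, this is a genuinely different route from the paper. The paper does not prove this lemma at all: the entire proof is a citation to Hickerson's work on the mock theta conjectures (specifically \cite[Theorem~1.5]{mock}). What you have supplied is a self-contained Liouville-type proof of this Kronecker/Appell--Lerch identity, in the spirit of classical theta-function arguments. The advantage of your approach is that it makes the paper independent of an external reference and transparently explains the mechanism (matching multipliers and principal parts); the advantage of the paper's approach is brevity, since the identity is already in the literature. It is worth noting that the partial-fraction form $\sum_{p\in\Z}a^p/(1-q^p b)$ you effectively pass through is precisely the form of \cite[Theorem~1.4]{mock} invoked in the paper's \emph{next} lemma, so your argument also renders that citation unnecessary.
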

\begin{proof}
See \cite[Theorem 1.5]{mock}.
\end{proof}Define
\[\Psi(x,y;q)=\sum_{\ell\geq 0}\sum_{p\geq 1}(x^p-x^{-\ell})y^{p-\ell}q^{p\ell}\]
The actual statement we needed in \S\ref{explicit} is
\begin{lemma}As formal power series
\[\Psi(x,y;q)=\Phi(xy,y^{-1};q)\]
\end{lemma}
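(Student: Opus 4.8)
The plan is to read off $\Phi(xy,y^{-1};q)$ from the bilateral expansion proved in the preceding lemma and then compare it with $\Psi$ coefficient-by-coefficient in $q$. Setting $a=xy$ and $b=y^{-1}$ in that lemma gives
\[\Phi(xy,y^{-1};q)=\sum_{\sign(i)=\sign(j)}\sign(i)\,x^{i}y^{i-j}q^{ij}.\]
First I would split the index set into the blocks $i,j\ge 0$ and $i,j<0$, reindexing the latter by $i=-p,\ j=-\ell$ with $p,\ell\ge 1$; the negative block then contributes $-\sum_{p,\ell\ge 1}x^{-p}y^{\ell-p}q^{p\ell}$. Both $\Psi$ and $\Phi(xy,y^{-1};q)$ are thereby organized as power series in $q$ graded by the value $N=ij\ge 0$, and I would check equality of the coefficient of each $q^{N}$.

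For $N\ge 1$ the relevant coefficient is a finite sum over the ordered factorizations $p\ell=N$ with $p,\ell\ge 1$. From the display above, $\Phi$ contributes $\sum_{p\ell=N}\bigl(x^{p}y^{p-\ell}-x^{-p}y^{\ell-p}\bigr)$, whereas directly from its definition $\Psi$ contributes $\sum_{p\ell=N}\bigl(x^{p}-x^{-\ell}\bigr)y^{p-\ell}$. The terms $x^{p}y^{p-\ell}$ already match, and the remaining terms match after relabelling the summation variable by $(p,\ell)\mapsto(\ell,p)$: this is a bijection of the set $\{(p,\ell):p\ell=N,\ p,\ell\ge 1\}$ and sends $x^{-p}y^{\ell-p}$ to $x^{-\ell}y^{p-\ell}$. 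Hence the coefficients of $q^{N}$ agree as Laurent polynomials for every $N\ge 1$, and this step is entirely routine.

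The one real obstacle is the coefficient of $q^{0}$, which is not a Laurent polynomial but an infinite, bilateral sum. On the $\Psi$ side only $\ell=0$ survives, giving $\sum_{p\ge 1}(x^{p}-1)y^{p}$; on the $\Phi$ side the constraint $ij=0$ together with $\sign(0)=+1$ leaves $\sum_{j\ge 0}y^{-j}+\sum_{i\ge 1}(xy)^{i}$. These converge in different annuli—the former for $|y|<1$, the latter (consistently with the hypothesis $|b|=|y^{-1}|<1$ of the lemma) for $|y|>1$—so the comparison must be carried out in $\Q(x,y)\ps{q}$, i.e. as power series in $q$ with rational-function coefficients. Summing the geometric series, the $\Psi$ side is $\tfrac{xy}{1-xy}-\tfrac{y}{1-y}$ and the $\Phi$ side is $\tfrac{xy}{1-xy}+\tfrac{1}{1-y^{-1}}$, and these coincide by the single rational-function identity $\tfrac{1}{1-y^{-1}}=-\tfrac{y}{1-y}$ (equivalently, the bilateral geometric series $\sum_{n\ge 0}y^{-n}+\sum_{n\ge 1}y^{n}$ vanishes in $\Q(y)$). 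With this last coefficient reconciled, the two power series agree in every degree, proving $\Psi(x,y;q)=\Phi(xy,y^{-1};q)$.
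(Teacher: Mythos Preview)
Your argument is correct and is a genuinely different, more streamlined route than the one the paper takes.

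The paper does \emph{not} invoke the preceding lemma (the bilateral sum $\sum_{\sign(i)=\sign(j)}\sign(i)a^ib^jq^{ij}$). Instead it pulls in a second identity from \cite{mock}, namely $\Phi(a,b;q)=\sum_{p\in\Z}\frac{a^p}{1-q^pb}$, evaluates it at $(a,b)=(xy,y^{-1})$ on the analytic region $0<|q|<|x|<|y^{-1}|<1$, expands each $\frac{1}{1-q^p y^{\pm 1}}$ as the appropriate geometric series, and after some reindexing obtains
\[
\Phi(xy,y^{-1};q)=\sum_{p,\ell>0}(x^p-x^{-\ell})y^{p-\ell}q^{p\ell}+\frac{xy}{1-xy}-\frac{y}{1-y}
\]
on that region. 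It then clears the denominators $(1-xy)(1-y^{-1})$ so that both sides have Laurent-polynomial $q$-coefficients, promotes the analytic equality to a formal one in $\C[x^{\pm1},y^{\pm1}]\ps q$, and finally reabsorbs the $\ell=0$ terms to recover $\Psi$.

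Your approach bypasses the second citation and the analytic region entirely: you use the bilateral expansion already on the page, match the $q^N$ coefficients for $N\ge1$ as honest Laurent polynomials via the involution $(p,\ell)\mapsto(\ell,p)$, and isolate the only nontrivial step—the $q^0$ coefficient—as a single rational-function identity $\frac{1}{1-y^{-1}}=-\frac{y}{1-y}$ in $\Q(x,y)$. This is exactly the same rational identity the paper is implicitly using when it passes from $\sum_{p,\ell>0}$ plus the two fractions to $\sum_{p\ge1,\ell\ge0}$, but you have exposed it directly rather than burying it inside an analytic continuation argument. The gain is economy and clarity; the cost is that you must be explicit (as you are) that the equality is asserted in $\Q(x,y)\ps q$, since neither $q^0$-coefficient is a Laurent polynomial on its own.
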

\begin{proof}By \cite[Theorem 1.4]{mock},
\[\sum_{p\in\mathbb{Z}}\frac{a^p}{1-q^pb}=\Phi(a,b;q)\] for $0<|q|<|a|<1$ and $b\neq q^{p}$ for any $p\in\Z$.  On the region 
\[R=\{(q,x,y)\in\C^3|0<|q|<|x|<|y^{-1}|<1\}\] 
we have, for $p>0$,  $|q^py|<1$, and for $p\geq 0$, $|q^py^{-1}|<1$.  Thus, each line in the following converges in $R$:
\begin{align*}
\Phi(xy,y^{-1};q)&=\sum_{p> 0}\frac{(xy)^p}{1-q^py^{-1}}+\frac{1}{1-y^{-1}}+\sum_{p<0}\frac{(xy)^p}{1-q^py^{-1}}\\
&=\sum_{p> 0}\frac{(xy)^p}{1-q^py^{-1}}+\frac{1}{1-y^{-1}}+\sum_{p>0}\frac{(xy)^{-p}}{1-q^{-p}y^{-1}}\\
&=\sum_{p> 0}\frac{(xy)^p}{1-q^py^{-1}}-\frac{y}{1-y}-\sum_{p>0}\frac{(q^py)(xy)^{-p}}{1-q^{p}y}\\
&=\sum_{p> 0}\sum_{\ell\geq 0}(xy)^pq^{p\ell}y^{-\ell}-\sum_{p>0}\sum_{\ell\geq 0}(q^py)(xy)^{-p}q^{p\ell}y^\ell-\frac{y}{1-y}\\
&\stackrel{(*)}{=}\sum_{p>0}\sum_{\ell\geq 0}(xy)^pq^{p\ell}y^{-\ell}-\sum_{\ell>0}\sum_{p>0}(xy)^{-\ell}q^{p\ell}y^{p}-\frac{y}{1-y}\\
&=\sum_{p>0}\sum_{\ell>0}(xy)^pq^{p\ell}y^{-\ell}-\sum_{\ell>0}\sum_{p>0}(xy)^{-\ell}q^{p\ell}y^{p}+\frac{xy}{1-xy}-\frac{y}{1-y}\\
&=\sum_{p,\ell>0}(x^p-x^{-\ell})y^{p-\ell}q^{p\ell}+\frac{xy}{1-xy}-\frac{y}{1-y}\\
\end{align*}
In the equality labeled (*) we replaced $\ell+1\mapsto p$ and $p\mapsto \ell$.  Thus, on $R$ we have
\begin{align*}
\sum_{p,\ell>0}&(x^p-x^{-\ell})y^{p-\ell}q^{p\ell}+\frac{xy}{1-xy}-\frac{y}{1-y}
=\frac{(q,q)_\infty\Theta(x;q)}{\Theta(xy;q)\Theta(y^{-1};q)}\\
&=\frac{(1-x)}{(1-xy)(1-y^{-1})}\prod_{n\geq 1}\frac{(1-q^n)^2(1-xq^n)(1-x^{-1}q^n)}{(1-xyq^n)(1-x^{-1}y^{-1}q^n)(1-yq^n)(1-y^{-1}q^n)}
\end{align*}
which can be rewritten as 
\begin{align}(1-xy)&(1-y^{-1})\left(\sum_{p,\ell>0}(x^p-x^{-\ell})y^{p-\ell}q^{p\ell}+\frac{xy}{1-xy}-\frac{y}{1-y}\right)\notag\\
&=(1-x)\prod_{n\geq 1}\frac{(1-q^n)^2(1-xq^n)(1-x^{-1}q^n)}{(1-xyq^n)(1-x^{-1}y^{-1}q^n)(1-yq^n)(1-y^{-1}q^n)}\label{nasty}\end{align}
For any $x,y$ with $|x|<|y^{-1}|$, \eqref{nasty} is an equality of series in $\mathbb{C}\ps{q}$ converging for $|q|<|x|$.  Therefore it must be an equality of formal power series in $\mathbb{C}[x,y,x^{-1},y^{-1}]\ps{q}$. Since both sides converge for $|q|,|xy|,|y|<1$ it follows it must be an equality of series in $\mathbb{C}\ps{q}$ for any such $x,y$; therefore, in that case, it must be that $\displaystyle (1-xy)(1-y^{-1})\sum_{p>0,\ell\geq 0}(x^p-x^{-\ell})y^{p-\ell}q^{p\ell}$ is equal to
\[(1-x)\prod_{n\geq 1}\frac{(1-q^n)^2(1-xq^n)(1-x^{-1}q^n)}{(1-xyq^n)(1-x^{-1}y^{-1}q^n)(1-yq^n)(1-y^{-1}q^n)}\]
and the conclusion follows.
\end{proof}

\subsection{A Useful Matrix}\label{matrix}
In $\S \ref{encoding}$ we used the matrix $\Amat{n}=(A^n_{ij})_{i,j\geq0}$ defined by
\[A^n_{ij}=\begin{cases}\sqnom{\frac{i+j}{2}}{n}\sqnom{j}{\frac{j-i}{2}}& i -j\equiv 0\mod 2\\0&i-j\equiv 1\mod 2 \end{cases}\]
i.e., the only nonzero entries are $A^n_{k,k+2\ell}=\sqnom{k+\ell}{n}\sqnom{k+2\ell}{\ell}$, $k,\ell\geq 0$.  In particular, $A^0_{k,k+2\ell}=\sqnom{k+2\ell}{\ell}$.  $\Amat{0}$ is upper triangular with ones along the diagonal, and is therefore invertible:
\begin{proposition}The inverse of $\Amat{0}$ is the matrix $\mathbf{B}=(B_{ij})_{i,j\geq 0}$ given by
\[B_{k,k+2\ell}=(-1)^\ell u^{\binom{\ell}{2}}\frac{[k+2\ell]}{[k+\ell]}\sqnom{k+\ell}{\ell}\]
and $B_{k,k+2\ell+1}=0$, for $k,\ell\geq 0$
\end{proposition}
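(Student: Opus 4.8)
The plan is to verify directly that $\Amat{0}\mathbf{B}=\mathbf{I}$ and invoke uniqueness of the inverse. Since both $A^0_{ij}$ and $B_{ij}$ vanish unless $i\equiv j\pmod 2$, the matrix product splits by parity and it suffices to compute the entries $(\Amat{0}\mathbf{B})_{k,k+2\ell}$ for $k,\ell\geq 0$. For fixed endpoints $k$ and $k+2\ell$ the only indices $m$ with both $A^0_{k,m}$ and $B_{m,k+2\ell}$ nonzero are $m=k+2s$ with $0\leq s\leq\ell$, so the relevant sum is finite, namely $\sum_{s=0}^{\ell}\sqnom{k+2s}{s}B_{k+2s,k+2\ell}$. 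The diagonal entry ($\ell=0$) is $A^0_{k,k}B_{k,k}=1$, so since $\Amat{0}$ is unit upper triangular and invertible, it remains only to show this sum vanishes for every $\ell\geq 1$; that identifies $\mathbf{B}=(\Amat{0})^{-1}$.

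The next step is an algebraic simplification that isolates the vanishing. Substituting $B_{k+2s,k+2\ell}=(-1)^{\ell-s}u^{\binom{\ell-s}{2}}\tfrac{[k+2\ell]}{[k+\ell+s]}\sqnom{k+\ell+s}{\ell-s}$ and expanding every $u$-binomial into $u$-factorials, the factors $[k+2s]!$ cancel; after reindexing $m=\ell-s$ the entry becomes
\[(\Amat{0}\mathbf{B})_{k,k+2\ell}=\frac{[k+2\ell]}{[\ell]!}\sum_{m=0}^{\ell}(-1)^m u^{\binom{m}{2}}\sqnom{\ell}{m}\prod_{j=1}^{\ell-1}[\,k+\ell-m+j\,].\]
The gain from this rearrangement is that, for $\ell\geq 1$, the trailing product $f(m)=\prod_{j=1}^{\ell-1}[k+\ell-m+j]$ is a product of exactly $\ell-1$ consecutive $u$-integers. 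Since $[a-m]=\tfrac{u^{a}z-1}{u-1}$ is affine in $z=u^{-m}$, the function $f(m)$ is a polynomial in $z$ of degree $\ell-1$, hence lies in the span of $z^0,\dots,z^{\ell-1}$.

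The vanishing then follows from the finite $u$-binomial theorem already recorded in Lemma \ref{candostuff}(2): the generating-function identity for $K_\ell$ is equivalent to $\sum_{m=0}^{\ell}(-1)^m u^{\binom{m}{2}}\sqnom{\ell}{m}z^m=\prod_{s=0}^{\ell-1}(1-zu^s)$. Specializing $z\mapsto u^{-r}$ shows that the linear functional $L[g]=\sum_{m=0}^{\ell}(-1)^m u^{\binom{m}{2}}\sqnom{\ell}{m}\,g(u^{-m})$ annihilates each monomial $z^r$ for $0\leq r\leq \ell-1$, because the factor indexed by $s=r$ becomes $1-u^0=0$. Applying $L$ to $f$, whose $z$-degree is at most $\ell-1$ by the previous paragraph, gives $(\Amat{0}\mathbf{B})_{k,k+2\ell}=0$ for all $\ell\geq 1$, as desired. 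The main obstacle is the bookkeeping in the middle step: one must track the powers of $u$ coming from $u^{\binom{\ell-s}{2}}$ together with the factorial cancellations carefully, so that the summand collapses to precisely $(-1)^m u^{\binom{m}{2}}\sqnom{\ell}{m}f(m)$ with $f$ of degree $\ell-1$; the presence of the factor $\tfrac{[k+2\ell]}{[k+\ell+s]}$, which prevents the summand from being a plain product of Gaussian binomials, is what makes this reduction the delicate part. Once the normal form is reached the conclusion is immediate. An alternative route, should the factorial manipulation prove awkward, is induction on $\ell$ using the Pascal recursion \eqref{hello} together with the splitting $[n]=[n-k]+u^{n-k}[k]$ from Lemma \ref{refer}, but the functional-annihilation argument above is cleaner.
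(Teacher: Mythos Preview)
Your proof is correct and follows a genuinely different path from the paper's. Both arguments begin identically: reduce to showing $(\Amat{0}\mathbf{B})_{k,k+2\ell}=0$ for $\ell\geq 1$, and simplify the factorials so that $\sqnom{\ell}{s}$ (equivalently $\sqnom{\ell}{m}$) appears as one factor. From there the paper converts everything to the \emph{symmetric} coefficients $\curbnom{\cdot}{\cdot}$, recognizes the remaining factor as $\curbnom{-\ell}{k+s}$, and reads the sum as the $t^k$-coefficient of $K_{-\ell}(t,u)K_{\ell}(t^{-1},u)=t^{-\ell}$, which needs part~(4) of Lemma~\ref{candostuff} on negative-index symmetric binomials. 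You instead stay with the ordinary $\sqnom{\cdot}{\cdot}$, observe that the companion factor $\prod_{j=1}^{\ell-1}[k+\ell-m+j]$ is a polynomial of degree $\ell-1$ in $u^{-m}$, and kill it with the finite $u$-binomial theorem $\sum_{m}(-1)^m u^{\binom{m}{2}}\sqnom{\ell}{m}z^m=\prod_{s=0}^{\ell-1}(1-zu^s)$ (which is just part~(2) of the same lemma after the substitution $t\mapsto -zu^{(\ell-1)/2}$). Your route is the $q$-analogue of ``the $\ell$-th finite difference annihilates polynomials of degree $<\ell$''; it is more self-contained since it avoids the negative-index machinery, while the paper's generating-function trick generalizes more readily to the computation of $\Pmat{n}=\Amat{n}\Amat{0}^{-1}$ in the next lemma. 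One cosmetic point: you use $f$ both for the function of $m$ and for the underlying polynomial in $z=u^{-m}$; naming the latter $P$ and writing $f(m)=P(u^{-m})$ would make the application of $L$ cleaner.
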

\begin{proof}We need only check that the $(k,k+2\ell)$ entry of $\Amat{0}\mathbf{B}$ for $\ell>0$ is $0$, since the diagonal terms are clearly $1$ and both matrices are upper triangular.  The relevant entries of $\mathbf{B}$ are
\[B_{k+2s,k+2\ell}=(-1)^{\ell-s}u^{\binom{\ell-s}{2}}\sqnom{k+\ell+s}{\ell-s}\frac{[k+2\ell]}{[k+\ell+s]}\]
Also note that
\begin{align*}\sqnom{k+2s}{s}\sqnom{k+\ell+s}{\ell-s}\frac{[k+2\ell]}{[k+\ell+s]}&=
\left(\frac{[k+2s]!}{[s]![k+s]!}\right)\left(\frac{[k+s+\ell]!}{[\ell-s]![k+2s]!}\right)\frac{[k+2\ell]}{[k+\ell+s]}\\
&=\left(\frac{[\ell]!}{[s]![\ell-s]!}\right)\left(\frac{[k+s+\ell-1]!}{[k+s]![\ell-1]!}\right)\frac{[k+2\ell]}{[\ell]}\\
&=\sqnom{\ell}{s}\sqnom{k+\ell-1}{\ell-1}\frac{[k+2\ell]}{[\ell]}\\
\end{align*}
Thus
\begin{align*}\sum_{s=0}^\infty A^0_{k,k+2s}&B_{k+2s,k+2\ell}=\sum_{s=0}^\ell(-1)^{\ell-s}u^{\binom{\ell-s}{2}}\sqnom{k+2s}{s}\sqnom{k+\ell+s}{\ell-s}\frac{[k+2\ell]}{[k+\ell+s]}\\
&=\left(\frac{[k+2\ell]}{[\ell]}\right)\sum_{s=0}^\ell(-1)^{\ell-s}u^{\binom{\ell-s}{2}}\sqnom{k+s+\ell-1}{\ell-1}\sqnom{\ell}{s}\\
&=\left(\frac{[k+2\ell]}{[\ell]}\right)\sum_{s=0}^\ell(-1)^{\ell-s}u^{\binom{\ell-s}{2}+\frac{(\ell-1)(k+s)}{2}+\frac{s(\ell-s)}{2}}\curbnom{k+s+\ell-1}{\ell-1}\curbnom{\ell}{s}\\
&=u^{\frac{\ell^2-\ell+(\ell-1)k}{2}}\left(\frac{[k+2\ell]}{[\ell]}\right)\sum_{s=0}^\ell(-1)^{\ell-s}\curbnom{k+s+\ell-1}{\ell-1}\curbnom{\ell}{s}\\
&=(-1)^{k+\ell}u^{\frac{\ell^2-\ell+(\ell-1)k}{2}}\left(\frac{[k+2\ell]}{[\ell]}\right)\sum_{s=0}^\ell\curbnom{-\ell}{k+s}\curbnom{\ell}{s}\\
\end{align*}
By (4) of $\S\ref{candostuff}$, $\curbnom{-\ell}{k+s}$ is the coefficient of $t^{k+s}$ in $K_{-\ell}(t,q)$ and $\curbnom{\ell}{s}$ is the coefficient of $t^{-s}$ in $K_{\ell}(t^{-1},q)$. Therefore, the sum is the coefficient of $t^k$ in $K_{-\ell}(t,q)K_{\ell}(t^{-1},q)=t^{-\ell}K_{-\ell}(t,q)K_{\ell}(t,q)=t^{-\ell}$ so it must be $0$, unless $\ell=k=0$, but we assumed $\ell>0$.
\end{proof}
\subsection{A Useful Product}\label{product}
In $ \S\ref{explicit}$, an explicit computation of the product $\Pmat{n}:=\Amat{n}\Amat{0}^{-1}$ enabled us to perform the calculation.  The product matrix $\Pmat{n}=(P^n_{ij})_{i,j\geq 0}$ is given by
\begin{lemma}\label{useful}For $k,\ell\geq 0$, $n>0$,
\[P^n_{k,k+2\ell}=u^{\ell^2+\ell (k-n)}\frac{[k+2\ell]}{[n+\ell]}\sqnom{n+\ell}{n}\sqnom{k+\ell-1}{n-1}\]
and $P^n_{k,k+2\ell+1}=0$.
\end{lemma}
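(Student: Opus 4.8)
The plan is to expand the matrix product $\Pmat{n}=\Amat{n}\mathbf{B}$, where $\mathbf{B}=\Amat{0}^{-1}$ is the inverse computed earlier in this section, and then to reduce the resulting single sum to a coefficient extraction exactly as in the proof that $\mathbf{B}=\Amat{0}^{-1}$. Since both $\Amat{n}$ and $\mathbf{B}$ vanish on odd off-diagonals, so does their product, which gives $P^n_{k,k+2\ell+1}=0$ immediately; it remains to treat the even entries. Writing $P^n_{k,k+2\ell}=\sum_{s=0}^\ell A^n_{k,k+2s}B_{k+2s,k+2\ell}$ with $A^n_{k,k+2s}=\sqnom{k+s}{n}\sqnom{k+2s}{s}$ and $B_{k+2s,k+2\ell}=(-1)^{\ell-s}u^{\binom{\ell-s}{2}}\frac{[k+2\ell]}{[k+s+\ell]}\sqnom{k+s+\ell}{\ell-s}$, I would first apply the same factorial manipulation used to verify $\mathbf{B}=\Amat{0}^{-1}$, namely
\[\sqnom{k+2s}{s}\sqnom{k+s+\ell}{\ell-s}\frac{[k+2\ell]}{[k+s+\ell]}=\frac{[k+2\ell]}{[\ell]}\sqnom{\ell}{s}\sqnom{k+s+\ell-1}{\ell-1},\]
pulling the $s$-independent factor $\frac{[k+2\ell]}{[\ell]}$ outside the sum.

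The crucial simplification is the factorial identity
\[\sqnom{k+s}{n}\sqnom{k+s+\ell-1}{\ell-1}=\sqnom{n+\ell-1}{n}\sqnom{k+s+\ell-1}{n+\ell-1},\]
valid for every $s$ (both sides equal $\frac{[k+s+\ell-1]!}{[n]![\ell-1]![k+s-n]!}$, and both vanish when $k+s<n$). This collapses the two $s$-dependent binomials into a single one with the fixed lower index $n+\ell-1$, at the cost of the $s$-independent factor $\sqnom{n+\ell-1}{n}$. Pulling that out as well, the entire computation reduces to evaluating
\[U:=\sum_{s=0}^\ell (-1)^{\ell-s}u^{\binom{\ell-s}{2}}\sqnom{\ell}{s}\sqnom{k+s+\ell-1}{n+\ell-1}.\]

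To evaluate $U$ I would use the generating-function technique of the $\Amat{0}^{-1}$ computation. By the finite $u$-binomial theorem (equivalently, parts (2) and (4) of Lemma \ref{candostuff} upon passing from symmetric to ordinary $u$-binomials), $(-1)^{\ell-s}u^{\binom{\ell-s}{2}}\sqnom{\ell}{s}$ is the coefficient of $x^{\ell-s}$ in $\prod_{i=0}^{\ell-1}(1-u^i x)$, while $\sqnom{k+s+\ell-1}{n+\ell-1}$ is the coefficient of $x^{k+s-n}$ in $\prod_{i=0}^{n+\ell-1}(1-u^i x)^{-1}$. Hence $U$ is the coefficient of $x^{k+\ell-n}$ in the Cauchy product $\left(\prod_{i=0}^{\ell-1}(1-u^i x)\right)\left(\prod_{i=0}^{n+\ell-1}(1-u^i x)\right)^{-1}=\prod_{i=0}^{n-1}(1-u^{\ell+i}x)^{-1}$, and reading off that coefficient yields $U=u^{\ell(k+\ell-n)}\sqnom{k+\ell-1}{n-1}=u^{\ell^2+\ell(k-n)}\sqnom{k+\ell-1}{n-1}$. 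Reassembling gives $P^n_{k,k+2\ell}=\frac{[k+2\ell]}{[\ell]}\sqnom{n+\ell-1}{n}\,U$, and the elementary identity $\frac{1}{[\ell]}\sqnom{n+\ell-1}{n}=\frac{1}{[n+\ell]}\sqnom{n+\ell}{n}$ converts this into the stated form. The only genuinely clever step is spotting the collapse of $\sqnom{k+s}{n}\sqnom{k+s+\ell-1}{\ell-1}$ into a single binomial with fixed lower index; once the summand has that shape, the coefficient-extraction argument is routine and runs parallel to the inverse-matrix computation.
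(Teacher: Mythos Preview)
Your argument is correct and is essentially the same as the paper's: both reduce $P^n_{k,k+2\ell}$ to the single sum $\sum_{s=0}^\ell(-1)^{\ell-s}u^{\binom{\ell-s}{2}}\sqnom{\ell}{s}\sqnom{k+s+\ell-1}{n+\ell-1}$ (you factor in two steps, the paper in one) and then evaluate it by coefficient extraction from a telescoping product of $u$-binomial generating functions. The only cosmetic difference is that you work directly with the asymmetric generating functions $\prod_{i}(1-u^ix)^{\pm1}$, whereas the paper first passes to the symmetric coefficients $\curbnom{\cdot}{\cdot}$ and the functions $K_{\pm m}(t,u)$; these are equivalent bookkeepings, and your identity $\frac{1}{[\ell]}\sqnom{n+\ell-1}{n}=\frac{1}{[n+\ell]}\sqnom{n+\ell}{n}$ reconciles the two prefactors (with the $\ell=0$ case handled directly).
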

\begin{proof}The proof is a calculation very similar to the proof of Lemma \ref{candostuff}.  Note that for $\ell\geq s$
\begin{align*}
\sqnom{k+s}{n}&\sqnom{k+2s}{s}\sqnom{k+s+\ell}{\ell-s}=\\
&=\frac{[k+s]\cdots[k+s-n+1]}{[n]!}\frac{[k+2s]\cdots[k+s+1]}{[s]!}\frac{[k+s+\ell]\cdots[k+2s+1]}{[\ell-s]!}\\
&=\frac{[k+s+\ell]!}{[n]![s]![\ell-s]![k+s-n]!}\\
&=\left(\frac{[n+\ell]!}{[n]![\ell]!}\right)\left(\frac{[\ell]!}{[s]![\ell-s]!}\right)\left(\frac{[k+s+\ell-1]!}{[k+s-n]![n+\ell-1]!}\right)\frac{[k+s+\ell]}{[n+\ell]}\\
\end{align*}so
\begin{align}P^n_{k,k+2\ell}&=\sum_{s=0}^\ell A^n_{k,k+2s}B_{k+2s,k+2\ell}\notag\\
&=\sum_{s=0}^\ell(-1)^{\ell-s}u^{\binom{\ell-s}{2}}\sqnom{k+s}{n}\sqnom{k+2s}{s}\sqnom{k+s+\ell}{\ell-s}\frac{[k+2\ell]}{[k+s+\ell]}\notag\\
&=\frac{[k+2\ell]}{[n+\ell]}\sqnom{n+\ell}{n}\sum_{s=0}^\ell(-1)^{\ell-s}u^{\binom{\ell-s}{2}}\sqnom{k+s+\ell-1}{n+\ell-1}\sqnom{\ell}{s}\notag\\
&=\frac{[k+2\ell]}{[n+\ell]}\sqnom{n+\ell}{n}\sum_{s=0}^\ell(-1)^{\ell-s}u^{\binom{\ell-s}{2}+\frac{(n+\ell-1)(k-n+s)}{2}+\frac{s(\ell-s)}{2}}\curbnom{k+s+\ell-1}{n+\ell-1}\curbnom{\ell}{s}\notag\\
&=\frac{[k+2\ell]}{[n+\ell]}\sqnom{n+\ell}{n}u^{\frac{\ell^2-\ell+(n+\ell-1)(k-n)}{2}}\sum_{s=0}^\ell(-1)^{\ell-s}u^{sn/2}\curbnom{k+s+\ell-1}{n+\ell-1}\curbnom{\ell}{s}\notag\\
&=(-1)^{k-n+\ell}\frac{[k+2\ell]}{[n+\ell]}\sqnom{n+\ell}{n}u^{\frac{\ell^2-\ell+(n+\ell-1)(k-n)}{2}}\sum_{s=0}^\ell u^{sn/2}\curbnom{-(n+\ell)}{k-n+s}\curbnom{\ell}{s}\label{stuffff}
\end{align}
$u^{sn/2}\curbnom{-(n+\ell)}{k-n+s}$ is the coefficient of $t^{k-n+s}$ in $u^{(n^2-kn)/2}K_{-(n+\ell)}(tu^{n/2},u)$ and $\curbnom{\ell}{s}$ is the coefficient of $t^{-s}$ in $K_\ell(t^{-1},u)$. Therefore, the sum in \eqref{stuffff} is the coefficient of $t^{k-n}$ in 
\begin{align*}
u^{(n^2-kn)/2}K_{-(n+\ell)}(tu^{n/2},u)K_\ell(t^{-1},u)&=u^{(n^2-kn)/2}t^{-\ell}K_{-(n+\ell)}(tu^{n/2},u)K_\ell(t,u)\\
&=u^{(n^2-kn)/2}t^{-\ell}K_{-n}(tu^{(n+\ell)/2},u)
\end{align*}
which is
\begin{eqnarray*}
u^{\frac{\ell^2+\ell k}{2}}\curbnom{-n}{k-n+\ell}&=&(-1)^{k-n+\ell}u^{\frac{\ell^2+\ell k}{2}}\curbnom{k+\ell-1}{n-1}\\
&=&(-1)^{k-n+\ell}u^{\frac{\ell^2+\ell k-(n-1)(k+\ell-n)}{2}}\sqnom{k+\ell-1}{n-1}\
\end{eqnarray*}
and we get
\[P^n_{k,k+2\ell}=u^{\ell^2+\ell k-n\ell}\frac{[k+2\ell]}{[n+\ell]}\sqnom{n+\ell}{n}\sqnom{k+\ell-1}{n-1}
\]
\end{proof}

\bibliography{biblio}
\bibliographystyle{amsalpha}

\end{document}